\newtheorem{thm}{Theorem}[section]
\newtheorem{cor}[thm]{Corollary}
\newtheorem{lem}[thm]{Lemma}
\newtheorem{prop}[thm]{Proposition}
\theoremstyle{definition}
\newtheorem{defn}[thm]{Definition}
\theoremstyle{remark}
\newtheorem{rem}[thm]{Remark}
\numberwithin{equation}{section}
\newcommand{\R}{\mathbb R}
\newcommand{\be}{\begin{equation}}
\newcommand{\ee}{\end{equation}}
\newcommand{\eps}{\epsilon}
\newcommand{\p}{\partial}
\newcommand{\comment}[1]{}
\begin{document}

\title{The two membranes problem for different operators}

\author{L. Caffarelli}
\address{Department of Mathematics, University of Texas at Austin, Austin, TX 78712, USA}\email{\tt caffarel@math.utexas.edu}
\author{D. De Silva}
\address{Department of Mathematics, Barnard College, Columbia University, New York, NY 10027, USA}
\email{\tt  desilva@math.columbia.edu}
\author{O. Savin}
\address{Department of Mathematics, Columbia University, New York, NY 10027, USA}\email{\tt  savin@math.columbia.edu}
\thanks{D.~D. is supported by NSF grant DMS-1301535.  O.~S.~is supported by  NSF grant DMS-1200701.}

\begin{abstract} We study the two membranes problem for different operators, possibly nonlocal. We prove a general result about the H\"older continuity of the solutions and we develop a viscosity solution approach to this problem. Then we obtain $C^{1,\gamma}$ regularity of the solutions provided that the orders of the two operators are different. In the special case when one operator coincides with the fractional Laplacian, we obtain the optimal regularity and a characterization of the free boundary. \end{abstract}

\maketitle
\section{Introduction}

In this paper we study the two membranes problem for different operators. Physically the problem consists in having two elastic membranes made of possibly different composite materials that are constrained one on top of the other. This is a double obstacle problem in which each membrane can be viewed as the obstacle for the other membrane, and the two obstacles interact at the same time. 

 The two membranes problem for the Laplacian was first considered by Vergara-Caffarelli \cite{VC} in the context of variational inequalities. In this case the situation can be reduced to the classical obstacle problem by looking at the vertical distance between the membranes. The two membranes problem for a nonlinear operator was studied by Silvestre \cite{S1}. He obtained the optimal $C^{1,1}$ regularity of the solutions together with a characterization of the regularity of the free boundary of the coincidence set. The key step is to show that the difference between the two solutions solves an obstacle problem for the linearized operator. 
 
 We also mention that a more general version of the two membranes problem involving $N$ membranes was considered by several authors (see for example \cite{ARS, CCVC, CV}).
 
The two membranes problem for different operators is more challenging mathematically. In the unconstrained parts the membranes solve different equations and therefore their difference solves a fourth order equation rather than a second order equation. For example even in the simplest case of two dimensions and two linear operators, say $\triangle$ and $\tilde \triangle:= \p_{xx}+2\p_{yy}$, the optimal regularity of the solutions seems to be a difficult problem. 

In this paper we consider the two membranes problem for  the large class of elliptic operators, possibly nonlocal, of order $2s \in (0,2]$. The interest in the nonlocal case comes from the applications. It is well known for example that the classical Signorini problem in elasticity which consists in finding the equilibrium position of an elastic body resting on a rigid surface, is modeled by an obstacle problem for the fractional Laplacian $\Delta^{1/2}$. In the case when the elastic body presses against a membrane, one obtains a two membranes obstacle problem involving a fractional Laplacian and a second order operator.  

In the general case, we prove a result about the H\"older continuity of the solutions and we develop a viscosity solution approach. Then we obtain better regularity properties of the solutions provided that the orders of the two operators are different. Heuristically this situation corresponds to the case when one membrane, say the lower membrane, is more sensitive to small infinitesimal changes. From this we can already deduce a certain initial regularity of the lower membrane. Then, the regularity of the upper membrane can be obtained by solving the obstacle problem in which the obstacle is given by the lower membrane. 
In order to obtain the optimal regularity we need to repeat these arguments several times.
A large part of the paper is devoted to obtaining estimates for various obstacle problems which are optimal with respect to the smoothness of the obstacle.
We first discuss the general case of operators that correspond to translation invariant kernels. Then we consider the special case of the fractional Laplacian. As mentioned above in the course of the paper we also treat the obstacle problem for translation invariant kernels  which is of independent interest.
 
 The paper is organized as follows. In Section 2 we formulate the two membranes problem and state precisely our results. In Section 3 we obtain the H\"older regularity of the minimizing pair. In Section 4 we develop the viscosity approach to the two membranes problem. In Section 5 we deal with the translation invariant kernels and finally in Section 6 we discuss the case of the fractional Laplacian. The Appendix is devoted to the proof of Schauder estimates for nonlocal equations.

\section{Main results}

\subsection{Notation}
Let $s \in (0,1)$ and let 
$k(x,y)$ be a symmetric, measurable kernel proportional to $|x-y|^{-n-2s}$, i.e.
$$0<\lambda \le k(x,y) |x-y|^{n+2s} \le \Lambda, \quad \quad k(x,y)=k(y,x).$$

Given a function $u \in L_{loc}^2$ we define its $H^s$ seminorm in $B_1$, the unit ball,  as 
$$\|u \|^2_{H^s(B_1)}:=\frac 1 2\int\int_{(\R^n\times \R^n) \setminus (\mathcal C B_1\times \mathcal C B_1)}\frac{(u(x)-u(y))^2}{|x-y|^{n+2s}} \, dx dy,$$ and if $\|u\|_{H^s(B_1)}<\infty$ we write $u \in H^s(B_1)$. Here for any set $E \subset \R^n$, we denote by $\mathcal C E$ its complement in $\R^n.$

It is not difficult to check that 
\begin{equation}\label{l2bound}\left \|u- \fint_{B_1} u \right \|_{L^2(\R^n,d \omega)} \le C  \|u \|_{H^s(B_1)}, \quad \quad  d \omega:=\frac{dx}{1+|x|^{n+2s}} .\end{equation}

Given two functions $u,v \in H^s(B_1)$
we define the ``inner product" of $u$ and $v$ with respect to the kernel $k$ as
\begin{equation}\label{E}
\mathcal E_{k}(u,v) : =\frac 1 2\int\int_{(\R^n\times \R^n) \setminus (\mathcal C B_1\times \mathcal C B_1)}(u(x)-u(y))(v(x)-v(y)) \, \, k(x,y) dx dy. 
\end{equation}

If $u$ minimizes the energy $\mathcal E_k(u,u)$ among all functions $u \in H^s(B_1)$ which are fixed outside $B_1$, say $u=u^0 \in H^s(B_1)$ outside $B_1$, then 
$$\mathcal E_k(u, \varphi) =0, \quad \quad \forall \varphi \in  H^s (B_1), \quad \mbox{with} \quad \varphi =0 \quad \mbox{outside} \quad B_1.$$

The last equality can be written in the sense of distributions as $\mathcal L_k u=0$ in $B_1$, with
$$<\mathcal L_k u, \varphi >:=- \mathcal E_k(u,\varphi) \quad  \forall \varphi \in C_0^\infty(B_1),$$
and formally $\mathcal L_k u$ can be written as the non-local operator
$$ \mathcal L_k u(x)=\int (u(y)-u(x))k(y,x)dy.$$

We wish to include the case when $k$ has order $s=1$. In this case the quadratic form $\mathcal E_k(u,u)$ is given by
\begin{equation}\label{EA}
\mathcal E_A(u,u)=\int_{B_1} (\nabla u)^T A(x) \nabla u \, dx,
\end{equation}
with $A(x)$ a symmetric $n \times n$ matrix satisfying $\lambda I \le A(x) \le \Lambda I$, and the linear operator associated to $\mathcal E _A$ is 
$$\mathcal L_A (u)= \textrm{div}(A(x) \nabla u) .$$

Finally, we notice the following scaling property of $\mathcal E_k$ after space dilation. Let $$\tilde u (x)=u (r x),$$ be the $1/r$ dilation of $u$ in the space variable. Then
$$\mathcal E_k(  u, v) = r^{n-2s} \, \, \mathcal E_{\tilde k}(  \tilde u, \tilde v)  $$
where in the double integral on the right we remove the contribution coming from $\mathcal C B_{1/r} \times \mathcal C B_{1/r}$ and the kernel $\tilde k(x,y):= r^{n+2s}k(rx,ry)$ is the rescaling on $k$, and therefore satisfies the same growth conditions as $k$.

\subsection{The two membranes problem -- General case, H\"older continuity  of the minimizers.}

We consider the two membranes obstacle problem in $B_1$ for operators corresponding to two different kernels $k_1$ and $k_2$ as above, with the order $s_1$ not necessarily equal to $s_2$. We look for a pair of functions $(u_1,u_2)$, with $u_2 \le u_1$ in $B_1$ and $u_1$, $u_2$ prescribed outside $B_1$, which minimizes the energy functional
\begin{equation}\label{F}\mathcal F(u_1,u_2):=\mathcal E_{k_1}(u_1,u_1) + \mathcal E_{k_2}(u_2,u_2) + \int_{B_1} u_1 f_1 + u_2 f_2 \, \, dx, \end{equation}
among all $ (u_1,u_2) \in \mathcal A.$

Here $f_i \in L^2(B_1)$ and $\mathcal A$ represents the set of admissible pairs,
$$\mathcal A=\left \{ (u_1,u_2)| \quad u_2 \le u_1, \quad u_i \in  H^{s_i}(B_1), \quad u_i=u^0_i \quad \mbox{outside} \quad B_1 \right\},$$
with $ u_i^0 \in H^{s_i}(B_1)$, $u^0_2 \le u^0_1$ in $ B_1$, a given pair of functions. 

With the convention in the Subsection above, we allow in the definition of the energy $\mathcal F$ also the cases when either one or both of the  $s_i$'s equal to 1, and we need to replace the quadratic form accordingly.

Since $\mathcal F$ is strictly convex, and $\mathcal F(u_1^0,u_2^0)< \infty$, we obtain the existence and uniqueness of a minimizing pair $(u_1,u_2)$ by the standard methods of the calculus of variations.
 
 \begin{prop}
 There exists a unique minimizing pair $(u_1,u_2) \in \mathcal A$ for the functional $\mathcal F$ in \eqref{F}. Moreover $u_i \in L^2(\R^n, d\omega_i)$ and $\sum_i \|u_i\|_{L^2(d\omega_i)} \leq C$ for a constant $C$ depending on the boundary data $u_i^0$ and on the $f_i$'s.
 \end{prop}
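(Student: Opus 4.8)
The plan is to use the direct method of the calculus of variations, tracking carefully that we can work in a weak topology on which $\mathcal F$ is lower semicontinuous and the admissible class $\mathcal A$ is closed. First I would fix the competitor $(u_1^0,u_2^0)\in\mathcal A$; since $f_i\in L^2(B_1)$ and $\int_{B_1}u_i^0 f_i$ is finite, we get $\mathcal F(u_1^0,u_2^0)<\infty$, which means a minimizing sequence $(u_1^m,u_2^m)\in\mathcal A$ has $\mathcal E_{k_i}(u_i^m,u_i^m)$ bounded plus a bounded (possibly negative) linear term. The first task is to absorb the linear term and conclude a uniform bound on $\|u_i^m\|_{H^{s_i}(B_1)}$. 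For this one uses the comparability $\lambda|x-y|^{-n-2s_i}\le k_i\le\Lambda|x-y|^{-n-2s_i}$, so $\mathcal E_{k_i}(u_i^m,u_i^m)\ge\lambda\,\|u_i^m\|_{H^{s_i}(B_1)}^2$, together with the Poincar\'e-type inequality \eqref{l2bound}, which controls $\|u_i^m-\fint_{B_1}u_i^m\|_{L^2(d\omega_i)}$ — and hence $\|u_i^m\|_{L^2(B_1)}$, since the average is pinned down by the exterior data — by the seminorm. Then $\bigl|\int_{B_1}u_i^m f_i\bigr|\le\|f_i\|_{L^2}\|u_i^m\|_{L^2(B_1)}\le C\|f_i\|_{L^2}(1+\|u_i^m\|_{H^{s_i}(B_1)})$, and a Young's inequality argument swallows the seminorm: $\mathcal F\ge\frac\lambda2\sum_i\|u_i^m\|_{H^{s_i}}^2-C$. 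This simultaneously gives the a priori bound $\sum_i\|u_i\|_{L^2(d\omega_i)}\le C$ claimed in the moreover part, with $C$ depending only on the $u_i^0$ and the $f_i$. In the $s_i=1$ case the same estimates hold with $\mathcal E_A(u,u)=\int_{B_1}(\nabla u)^TA\nabla u\ge\lambda\int_{B_1}|\nabla u|^2$ in place of the seminorm and the ordinary Poincar\'e inequality.

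Next, from this bound I would extract a subsequence along which each $u_i^m$ converges weakly in the affine Hilbert space $\{u^0_i\}+H^{s_i}_0(B_1)$ (the space of $H^{s_i}$ functions agreeing with $u_i^0$ outside $B_1$) to some limit $u_i$, and, by compactness of the embedding $H^{s_i}(B_1)\hookrightarrow L^2(B_1)$ together with the $L^2(d\omega_i)$ bound, strongly in $L^2(B_1)$ and a.e. in $\R^n$ after a further subsequence. The a.e. convergence passes the constraint to the limit: $u_2^m\le u_1^m$ a.e. gives $u_2\le u_1$ a.e., and the exterior condition $u_i=u_i^0$ outside $B_1$ is preserved, so $(u_1,u_2)\in\mathcal A$. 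For lower semicontinuity, each quadratic form $\mathcal E_{k_i}(\cdot,\cdot)$ is a nonnegative bounded symmetric bilinear form on the relevant Hilbert space, hence weakly lower semicontinuous; the linear terms $\int_{B_1}u_i f_i$ are weakly continuous because $u_i^m\to u_i$ strongly in $L^2(B_1)$ and $f_i\in L^2(B_1)$. Therefore $\mathcal F(u_1,u_2)\le\liminf_m\mathcal F(u_1^m,u_2^m)=\inf_{\mathcal A}\mathcal F$, so $(u_1,u_2)$ is a minimizer.

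Finally, uniqueness follows from strict convexity: $\mathcal A$ is a convex set (a convex constraint $u_2\le u_1$ intersected with affine subspaces), each $\mathcal E_{k_i}(u_i,u_i)$ is convex in $u_i$ and strictly convex modulo additions that are constant outside $B_1$ — but those are pinned by the boundary data, so on $\mathcal A$ it is genuinely strictly convex — and the linear terms are affine. If $(u_1,u_2)$ and $(\tilde u_1,\tilde u_2)$ were two minimizers, their midpoint would lie in $\mathcal A$ with strictly smaller energy unless $u_i=\tilde u_i$, a contradiction.

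I expect the main obstacle to be the coercivity step: one must be careful that \eqref{l2bound} only controls $u_i$ up to its average over $B_1$, and that the average is indeed determined by the prescribed exterior data (or, more precisely, that it cannot run off to infinity along the minimizing sequence) — so a little care is needed to package \eqref{l2bound} with the fixed exterior values into a genuine uniform $H^{s_i}(B_1)$ bound before absorbing the $f_i$ terms. The remaining compactness and semicontinuity arguments are standard, as the paper itself notes.
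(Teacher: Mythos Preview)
Your proposal is correct and follows precisely the approach the paper indicates: the direct method of the calculus of variations combined with strict convexity for uniqueness, with the $L^2(d\omega_i)$ bound coming from the Poincar\'e-type inequality \eqref{l2bound}. In fact you have written out in full the details the paper leaves implicit (coercivity via \eqref{l2bound} and absorption of the linear term, weak compactness, closedness of $\mathcal A$, lower semicontinuity), and your flagged ``main obstacle'' about controlling the $B_1$-average via the fixed exterior data is exactly the point one needs to make \eqref{l2bound} yield a genuine a priori bound.
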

 
 We observe that to prove the $L^2$ bound for the minimizing pair, one uses \eqref{l2bound}.

 Notice that if $\varphi \ge 0$ and $\varphi \in C_0^\infty (B_1)$ then 
 $$(u_1 + \eps \varphi,u_2) \in \mathcal A \quad \mbox{ and}  \quad (u_1,u_2-\eps \varphi) \in \mathcal A,$$ 
 which gives 
 \begin{equation} \label{EL1}
 \mathcal L_{k_1} u_1 \le f_1, \quad \quad \mathcal L_{k_2} u_2 \ge f_2 \quad \mbox{in} \quad B_1,
 \end{equation}
in the sense of distributions, thus $\mathcal L_{k_1}u_1$, $\mathcal L_{k_2}u_2$ are Radon measures.

Moreover, if  $\varphi \in C_0^\infty (B_1)$ is not necessarily positive we still have $$(u_1+\eps \varphi,u_2+ \eps \varphi)\in \mathcal A,$$ hence
\begin{equation}\label{EL2}
\mathcal L_{k_1} u_1+ \mathcal L_{k_2} u_2= f_1+f_2   \quad \mbox{in} \quad B_1.
\end{equation}
Equations \eqref{EL1}-\eqref{EL2} together with the inequality $u_2 \le u_1$, can be viewed as the Euler-Lagrange characterization of the minimizing pair. 
 
In this paper we are concerned with the regularity of the minimizing pair $(u_1,u_2)$ and some properties of the free boundary $\Gamma$ which is defined as the boundary of the coincidence set, i.e.$$\Gamma:=\p \{u_1=u_2\} \cap B_1.$$

Our first result is the following interior H\"older regularity of the minimizing pair.

\begin{thm}\label{T1}
Assume $f_i \in L^{q_i}(B_1)$ with $q_i > \frac{n}{2s_i} $. Let $(u_1,u_2)$ be a minimizing pair. Then $u_i \in C^\alpha(B_1)$ and
$$\sum_i \|u_i\|_{C^\alpha(B_{1/2})} \le C  \sum_i \left(\|u_i\|_{L^2(d \omega_i)}+\|f_i\|_{L^{q_i}(B_1)}\right ),$$
with $\alpha$ and $C$ depending on $n$, $\lambda$, $\Lambda$, $s_i$, $q_i$.
\end{thm}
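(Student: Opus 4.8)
The plan is to prove the estimate in two stages: first an interior $L^\infty$ bound
$$\sum_i\|u_i\|_{L^\infty(B_{3/4})}\ \le\ C\sum_i\big(\|u_i\|_{L^2(d\omega_i)}+\|f_i\|_{L^{q_i}(B_1)}\big),$$
and then an upgrade of boundedness to $C^\alpha$ by a geometric decay of oscillation on dyadic balls. The inputs are the Euler--Lagrange system \eqref{EL1}--\eqref{EL2}, the constraint $u_2\le u_1$, and the standard De Giorgi--Nash--Moser theory for nonlocal equations with a supercritical right-hand side $f\in L^{q}$, $q>n/2s$ (local boundedness of subsolutions, weak Harnack for supersolutions, interior Hölder estimate for solutions); all ``tail'' contributions are measured by the weighted norm $\|\cdot\|_{L^2(d\omega_i)}$, which is finite by the Proposition.

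For the boundedness I would first record that $u_1$ solves the one-membrane obstacle problem for $\mathcal L_{k_1}$ with right-hand side $f_1$ and obstacle $u_2$ from below (i.e. $u_1\ge u_2$, $\mathcal L_{k_1}u_1\le f_1$, and $\mathcal L_{k_1}u_1=f_1$ off the coincidence set $\{u_1=u_2\}$), and symmetrically that $u_2$ solves the obstacle problem for $\mathcal L_{k_2}$ with obstacle $u_1$ from above; this follows from \eqref{EL1}--\eqref{EL2} since $\mu:=f_1-\mathcal L_{k_1}u_1=\mathcal L_{k_2}u_2-f_2$ is a nonnegative measure carried by $\{u_1=u_2\}$. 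Let $W_i$ solve $\mathcal L_{k_i}W_i=f_i$ in $B_1$ with exterior datum $u_i^0$. By comparison $W_1\le u_1$ (a supersolution lies above the solution) and $u_2\le W_2$ (a subsolution lies below it), and $\|W_i\|_{L^\infty(B_{7/8})}$ is bounded by the right-hand side of the desired estimate by the nonlocal theory; this gives $u_1\ge-C$ and $u_2\le C$. For the two remaining bounds I would apply the maximum principle for the nonlocal obstacle problem: the maximum of $u_1$ over a slightly smaller ball is attained either on the coincidence set, where $u_1=u_2\le C$, or away from that ball, where it is controlled by the tail; hence $\sup u_1\le C$, and symmetrically $\inf u_2\ge-C$. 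Together with $u_2\le u_1$ this closes the loop and yields the $L^\infty$ bound.

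In the second stage I would prove that for $x_0\in B_{1/2}$ and $r$ small
$$\mathrm{osc}_{B_r(x_0)}u_1+\mathrm{osc}_{B_r(x_0)}u_2\ \le\ (1-\theta)\big(\mathrm{osc}_{B_{2r}(x_0)}u_1+\mathrm{osc}_{B_{2r}(x_0)}u_2\big)+Cr^{\alpha}$$
for fixed $\theta,\alpha\in(0,1)$, after which iteration over dyadic scales gives $u_i\in C^\alpha(B_{1/2})$ with the claimed bound. Two cases occur. If $B_r(x_0)\subset\{u_1>u_2\}$, then each $u_i$ solves the equation $\mathcal L_{k_i}u_i=f_i$ in $B_r(x_0)$, and the interior Hölder estimate (with the $L^\infty$ bound controlling the tails) gives the decay directly. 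Otherwise there is a coincidence point $z\in\overline{B_{2r}(x_0)}$ with $u_1(z)=u_2(z)$; here I would run the De Giorgi oscillation lemma, using that $u_1$ is a supersolution to improve the infimum, that $u_2$ is a subsolution to improve the supremum, and --- for the complementary one-sided bounds --- that $u_2\le u_1$ pins $u_1$ from above and $u_2$ from below at $z$ up to $\mathrm{osc}_{B_{2r}}$, while $\mathcal L_{k_i}u_i=f_i$ holds on $\{u_1>u_2\}$, the $Cr^{\alpha}$ error coming from the $f_i$ terms.

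The main difficulty is this second case: neither $u_1$ nor $u_2$ is simultaneously a sub- and a supersolution on all of $B_1$ --- only one of the two inequalities is available for each membrane globally, the other holding merely off the coincidence set --- so the classical De Giorgi Hölder argument does not apply to either function in isolation, and the constraint $u_2\le u_1$ must be used in an essential way to recover the missing one-sided estimate near the free boundary $\Gamma$. A related technical point is that $\{u_1>u_2\}$ is only known to be open once continuity has been established, so the assertions that $\mu$ is carried by $\{u_1=u_2\}$ and that $\mathcal L_{k_i}u_i=f_i$ on $\{u_1>u_2\}$, as well as the maximum-principle steps, have to be justified for the distributional sub/supersolutions directly, e.g. by inserting suitably truncated test functions into the variational inequality.
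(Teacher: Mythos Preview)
Your outline identifies the right difficulty but does not resolve it, and this is a genuine gap. In Stage~2, the ``otherwise'' case (a coincidence point $z\in\overline{B_{2r}}$) is where the whole proof lives, and the sentence ``run the De Giorgi oscillation lemma, using that $u_1$ is a supersolution \dots\ and $u_2$ is a subsolution \dots\ and $u_2\le u_1$ pins $u_1$ from above and $u_2$ from below at $z$'' does not yield an oscillation decay. Knowing $u_1(z)=u_2(z)$ only localizes one value of each function; it does not control $\sup_{B_r}u_1$ or $\inf_{B_r}u_2$, and the weak Harnack inequalities you invoke go in the wrong direction for those two quantities. Concretely, the supersolution inequality for $u_1$ bounds $\inf u_1$ from below, not $\sup u_1$ from above, and the presence of a coincidence point gives no leverage on $\sup u_1$ because on the coincidence set you only have $u_1=u_2\le \sup u_2$, which is itself uncontrolled from below. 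The same circularity arises in the $L^\infty$ step: your ``maximum principle for the nonlocal obstacle problem'' uses that $\mathcal L_{k_1}u_1=f_1$ on $\{u_1>u_2\}$, but this set is only known to be open after continuity, as you note yourself; and for nonlocal operators there is no pointwise maximum principle saying the maximum lies on the coincidence set or the exterior.

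The paper's argument is structurally different and avoids this circularity. It works entirely at the energy level and exploits the full minimality of the pair (not just the Euler--Lagrange relations) to derive a Caccioppoli inequality for $(u_2+m)^-$: the admissible perturbation $(u_1+\eps\varphi^2u_1^-,\,u_2+\eps\varphi^2u_2^-)\in\mathcal A$ together with $u_2^-\ge u_1^-$ produces an energy inequality in which the $u_1$ contribution is absorbed into the $u_2$ estimate. With this in hand, the first De Giorgi lemma gives $u_2\in L^\infty$, and a second De Giorgi lemma (using the nonlocal term $F_{k_2}$ as a density--oscillation input) gives the oscillation decay for $u_2$. The asymmetry $s_2\ge s_1$ is used here in an essential way: under the dyadic rescaling the coupled energy acquires a factor $\rho^{2(s_2-s_1)}\le 1$ in front of $\mathcal E_{k_1}$, so the Caccioppoli constants stay uniform along the iteration. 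Only after $u_2\in C^\alpha$ is established does one return to $u_1$, now treating $u_2$ as a fixed H\"older obstacle and running a separate oscillation lemma for the single obstacle problem. The missing idea in your proposal is precisely this coupled Caccioppoli inequality coming from minimality together with the sequential scheme ``first $u_2$, then $u_1$''; trying to decay both oscillations simultaneously from \eqref{EL1}--\eqref{EL2} alone does not seem to close.
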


To obtain better regularity properties of the minimizing pair we need to require that the kernels $k_i$ are more regular, as in the next subsection.

\subsection{Translation invariant kernels -- Viscosity solutions and higher regularity}
We consider the case when $k$ is translation invariant, i.e.
$$k(x,y)=K(x-y), \quad \quad K(y)=K(-y),$$
and satisfies the natural growth condition of the gradient
\begin{equation*}
|\nabla K(y)| \le \frac{\Lambda}{|y|^{n+1+2s}}. 
\end{equation*}
 The integro-differential operator associated to this kernel can be written as
\begin{equation*}
\mathcal L_K w (x) := PV \int_{\R^n}(w(y)-w(x)) K(y-x) dy.
\end{equation*}
and the value $\mathcal L_K w(x)$ is well-defined as long as $w \in L^1(\R^n, d \omega)$ and $w$ is $C^{2s+\eps}$ at $x$.

In this case we show that the minimizing pair $(u_1,u_2)$ satisfies 
\begin{equation}\label{EL30} 
u_1 \ge u_2, \quad \quad \mathcal L_{K_1} u_1 \le f_1, \quad \quad \mathcal L_{K_2} u_2 \ge f_2 \quad \mbox{in} \quad B_1,
\end{equation}
\begin{equation}\label{EL40}
\mathcal L_{K_i}u_i=f_i \quad \mbox{on $\{u_2>u_1\}$,} \quad 
\sum_i \mathcal L_{K_i} u_i= \sum_i f_i   \quad \quad \mbox{in} \quad B_1,
\end{equation}
in the viscosity sense, and moreover these inequalities determine uniquely the pair $(u_1,u_2)$ (see Proposition \ref{evs}).

When the orders of the operators $\mathcal L_{K_i}$ are different we improve the result of Theorem \ref{T1} and obtain the $C^{1,\gamma}$ regularity of the pair $(u_1,u_2)$. Notice that the two membranes may interact, that is $\{u_1= u_2 \} \cap B_1 \ne \emptyset$ independently of the sign of $f_1$, $f_2$. We obtain the following result.

\begin{thm}\label{T2}
Assume $s_1<s_2$ and $u_i$ satisfy \eqref{EL30}-\eqref{EL40} with $f_i \in C^{0,1}(B_1)$. Then $u_i \in C^{\alpha_i}(B_1)$ with $\alpha _i >1$, 
$$\alpha_1=\max\{1,2s_1  \} + \eps_0, \quad \quad \alpha_2=\alpha_1 + 2(s_2-s_1)$$
and
$$\sum_i \|u_i\|_{C^{\alpha_i}(B_{1/2})} \le C  \sum_i \left(\|u_i\|_{L^1(d \omega_i)}+\|f_i\|_{C^{0,1}(B_1)}\right ),$$
with $\eps_0$ and $C$ depending on $n$, $\lambda$, $\Lambda$, $s_i$.
\end{thm}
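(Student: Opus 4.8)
The plan is to run a bootstrap argument that alternates between the two membranes, using at each step the extra smoothness gained by the operator of higher order to push the regularity of the other membrane up, until the exponents saturate at the claimed values. Throughout I will freely use the Schauder estimates for the nonlocal equations $\mathcal L_{K_i} w = g$ (stated in the Appendix), which say that if $g \in C^\beta$ then the solution gains $2s_i$ derivatives (in the appropriate Hölder scale, with the usual caveat that one caps at $C^{1,\gamma}$ when $2s_i>1$ would jump past an integer, or rather one tracks the exponent $\min\{\beta+2s_i,\ \cdot\}$ carefully), together with the corresponding estimate for the obstacle problem, whose solution inherits the regularity of the obstacle up to the natural threshold. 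The normalization is the standard one: by subtracting an affine function and rescaling we may assume $\|u_i\|_{L^1(d\omega_i)}+\|f_i\|_{C^{0,1}}\le 1$, and we work in $B_{3/4}$, shrinking the ball a fixed amount at each of the finitely many steps.

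First I would record the base case. By Theorem~\ref{T1}, $u_1,u_2\in C^\alpha(B_{3/4})$ for some $\alpha\in(0,1)$ (here $f_i\in C^{0,1}\subset L^{q_i}$ for every $q_i$). Now comes the key asymmetry coming from $s_1<s_2$: the lower membrane $u_2$ satisfies $\mathcal L_{K_2}u_2\ge f_2$ with equality off the contact set, i.e. $u_2$ solves an obstacle problem from below with obstacle $u_1$, \emph{for the higher-order operator}. Since the coincidence set is $\{u_1=u_2\}$ and on its complement $\mathcal L_{K_2}u_2=f_2$, we can run the first improvement step by viewing $u_1$ as solving an obstacle problem from above for $\mathcal L_{K_1}$ with obstacle $u_2$: from $u_2\in C^\alpha$ and $f_1\in C^{0,1}$ the obstacle-problem estimate gives $u_1\in C^{\min\{1,2s_1\}+\delta_1}$ for some $\delta_1>0$ — but actually the cleaner route, which is what I would follow, is to use the sum equation \eqref{EL40}: $\mathcal L_{K_1}u_1 = \sum f_i - \mathcal L_{K_2}u_2$, and treat $\mathcal L_{K_2}u_2$ as a known right-hand side whose regularity we improve inductively.

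The heart of the argument is therefore the inductive step. Suppose at stage $m$ we know $u_2\in C^{\beta}$ with $\beta < \alpha_2$ and $u_1\in C^{\beta - 2(s_2-s_1)}$ (the gap $2(s_2-s_1)$ being exactly what the two orders differ by). Because $u_1$ is, locally near the free boundary, the solution of the obstacle problem for $\mathcal L_{K_1}$ with obstacle $u_2\in C^\beta$, while away from the free boundary $\mathcal L_{K_1}u_1=f_1\in C^{0,1}$, the obstacle-problem Schauder estimate for order-$2s_1$ operators yields $u_1\in C^{\min\{\beta,\ 1\}+2s_1}$ — capped, of course, by $\alpha_1=\max\{1,2s_1\}+\eps_0$, since beyond that the obstacle problem offers no further smoothing (this cap is where the $\eps_0$ and the $\max\{1,2s_1\}$ come from). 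With this improved $u_1$ in hand, feed it back into the obstacle problem satisfied by $u_2$ for the order-$2s_2$ operator $\mathcal L_{K_2}$ with obstacle $u_1$: the solution inherits the obstacle's regularity, so $u_2$ jumps to $C^{(\text{reg of }u_1)+2(s_2-s_1)}$, i.e. up to $\alpha_1+2(s_2-s_1)=\alpha_2$. Since each round increases the Hölder exponent by a fixed positive amount (a fixed fraction of $\min\{1,2s_1\}$ and of $s_2-s_1$) until the caps are hit, the process terminates after finitely many steps at the asserted exponents $\alpha_1$ and $\alpha_2$, and tracking the constants through the finitely many applications of the Schauder and obstacle estimates gives the stated bound.

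The main obstacle I anticipate is the bookkeeping at the cap $\min\{1,2s_1\}$: one must argue carefully that near the free boundary $u_1$ genuinely solves (or is trapped between solutions of) an obstacle problem for $\mathcal L_{K_1}$ with the moving obstacle $u_2$ — this uses \eqref{EL30}–\eqref{EL40} and the viscosity characterization, together with the fact that $\mathcal L_{K_2}u_2$, being $f_2$ off the contact set and a measure on it, is controlled in the right scale once $u_2$ is known to be $C^{\beta}$ with $\beta>2s_2$ there — and that the order-$2s_1$ obstacle problem with a $C^{0,1}$ forcing really does produce regularity up to exactly $\max\{1,2s_1\}+\eps_0$ and not beyond, which is the classical but delicate optimal-regularity statement for the nonlocal obstacle problem that I would quote from the obstacle-problem section of the paper. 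The remaining steps — the base case, the dilation normalization, and the summation of constants — are routine.
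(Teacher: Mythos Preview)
Your bootstrap outline has the right overall shape, but two of the key mechanisms are misidentified, and without them the iteration does not move.

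First, the step where you improve $u_2$. You claim that the obstacle problem for $\mathcal L_{K_2}$ with obstacle $u_1$ gives $u_2 \in C^{(\text{reg of }u_1) + 2(s_2-s_1)}$. This is false: Theorem~\ref{T20} says the solution to the obstacle problem inherits \emph{at most} the regularity of the obstacle (capped at $\max\{1,2s_2\}+\eps_0$), so this step would only return $u_2 \in C^{\alpha_1}$, not $C^{\alpha_2}$. The paper obtains the extra $2(s_2-s_1)$ differently: once $u_1 \in C^\gamma$ with $\gamma > 2s_1$, part~c) of the Schauder estimates (Proposition~\ref{SE5}) gives $\mathcal L_{K_1} u_1 \in C^{\gamma-2s_1}$ as a genuine H\"older function; then the sum equation in~\eqref{EL40} yields $\mathcal L_{K_2} u_2 = f_1 + f_2 - \mathcal L_{K_1} u_1 \in C^{\gamma-2s_1}$ in all of $B_1$ (not only off the contact set), and part~b) of Schauder gives $u_2 \in C^{2s_2 + (\gamma - 2s_1)} = C^{\gamma + 2(s_2-s_1)}$. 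You do mention the sum equation, but in the wrong direction (solving for $\mathcal L_{K_1} u_1$), which is useless at the start since $\mathcal L_{K_2} u_2$ is only a signed measure on the contact set until $u_2\in C^{2s_2+}$.

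Second, the starting regularity. Theorem~\ref{T1} gives only $u_i \in C^\alpha$ for some small $\alpha$, and plugging this into the two obstacle problems alternately yields no improvement whatsoever: each membrane just inherits the other's $C^\alpha$ regularity, you never cross $2s_1$, and $\mathcal L_{K_1} u_1$ is never a pointwise-defined H\"older function. The paper's actual first step is Proposition~\ref{l21}: combining the viscosity inequalities one shows that $v := \chi_{B_1} u_2$ satisfies $\mathcal L_{K_2} v \ge -M$ and $\mathcal L_{K_2} v + \chi_E \mathcal L_{K_1} v \le M$, and since $s_1 < s_2$ the term $\chi_E \mathcal L_{K_1} v$ is a lower-order perturbation, giving $u_2 \in C^\beta$ for every $\beta < 2s_2$ directly. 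This is what allows the loop above to start, and it is precisely the place where the hypothesis $s_1 < s_2$ is used --- your outline misses it entirely.

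A minor point: the obstacle-problem estimate does not read $u_1 \in C^{\min\{\beta,1\}+2s_1}$; Theorem~\ref{T20} gives $u_1 \in C^{\min\{\beta,\, \max\{1,2s_1\}+\eps_0\}}$, i.e., the solution matches the obstacle up to the threshold, with no additive gain of $2s_1$.
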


\subsection{The obstacle problem for operators with translation invariant kernels.} In order to obtain Theorem \ref{T2} we study the obstacle problem for the operator $\mathcal L_K$ associated to a translation invariant kernel of order $2s$. We obtain the following result, of independent interest.
Assume that $u$, $\varphi$ are continuous in $B_1$, $u \in L^1(\R^n, d \omega)$, 
and
\begin{equation}\label{opti1intro}
u \ge \varphi \quad \mbox{in $B_1$} ,
\end{equation}
\begin{equation}\label{opti2intro}
\mathcal L_{K} u \le f \quad \mbox{in $B_1$}, \quad \mbox{and} \quad \quad \quad \mathcal L_{K} u = f \quad \mbox{in} \quad \{u>\varphi\} \cap B_1,
\end{equation} with $K$ of order $2s$ as at the beginning of subsection 2.3.

\begin{thm}\label{T20intro}
Let $u$ be a solution to \eqref{opti1intro}, \eqref{opti2intro}, and assume that
$$\|u\|_{L^1(\R^n, d \omega)}, \|\varphi\|_{C^\beta(B_1)}, \|f\|_{C^{0,1}(B_1)} \le 1.$$
for some $\beta \ne 2s$. 

Then $u \in C^\alpha(B_1)$ for $\alpha = \min \{\beta, \max \{1, 2s\}+ \eps_0 \}$ and
 $$\|u\|_ {C^\alpha(B_{1/2})} \le C,$$
where $\eps_0$ depends on $n$, $\lambda$, $\Lambda$, $s$, and the constant $C$ may depend also on $\beta$.
\end{thm}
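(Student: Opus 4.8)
## Proof strategy for Theorem~\ref{T20intro}

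The plan is to treat this as an obstacle problem for a single nonlocal operator and to bootstrap the regularity of $u$ by successively exploiting the equation $\mathcal L_K u = f$ where $u > \varphi$, the one-sided inequality $\mathcal L_K u \le f$ everywhere, and the lower barrier $u \ge \varphi$. The target exponent $\alpha = \min\{\beta, \max\{1,2s\}+\eps_0\}$ reflects a competition between the smoothness of the obstacle (which caps $u$ at $C^\beta$, since $u$ agrees with $\varphi$ on the contact set) and the intrinsic smoothing of the operator (which, for an obstacle problem, should not be expected to beat $C^{2s}$ when $2s \le 1$, nor $C^{1,\eps_0}$ when $2s > 1$, because $\mathcal L_K u$ is only a measure across the free boundary). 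The hypothesis $\beta \ne 2s$ is what keeps these two regimes from colliding, which is exactly where the standard Schauder/iteration machinery would break down.

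First I would establish an initial modulus of continuity: since $\mathcal L_K u \le f$ in $B_1$ with $f$ bounded, $u$ is a bounded subsolution, hence bounded above locally, and combining with $u \ge \varphi$ (continuous) gives $u \in L^\infty_{loc}$; then a De~Giorgi--Nash--Moser type argument for nonlocal operators (as in Theorem~\ref{T1}, applied to the one-phase situation) yields $u \in C^{\alpha_0}(B_{3/4})$ for some small $\alpha_0 > 0$, with the appropriate norm bound. Next I would run an iteration on the scale: fix a point $x_0 \in B_{1/2}$, and consider the dichotomy at each dyadic scale $r$. Either $x_0$ is ``far'' from the contact set $\{u = \varphi\}$ at scale $r$ — in which case $\mathcal L_K u = f$ holds in $B_r(x_0)$ and the interior Schauder estimates for $\mathcal L_K$ from the Appendix upgrade the regularity by the full $2s$ (capped at $\beta$) — or $x_0$ is ``close'' to a contact point $z_r$, in which case I compare $u$ with $\varphi$ plus a solution of the equation: writing $u = \varphi + w$ near $z_r$, one has $w \ge 0$, $w(z_r) = 0$, and $\mathcal L_K w = f - \mathcal L_K \varphi$ which is controlled in the region where $u > \varphi$; the nonnegativity of $w$ together with the fact that its infimum is attained forces the growth of $w$ away from $z_r$ to be at most quadratic (when $2s > 1$) or at most of order $2s$ (when $2s \le 1$), capped by the $C^\beta$ growth of $\varphi$ from its own minimum behavior. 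The key quantitative tool here is a pointwise-to-$L^\infty$ estimate: control $\sup_{B_r(z_r)} w$ by $r^{2s}$ times $\|\mathcal L_K w\|_{L^\infty}$ on the noncontact set plus the contribution of the far-away data measured in $d\omega$.

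The main obstacle, I expect, is the second alternative — handling the behavior near the free boundary without any regularity assumption on $\Gamma = \p\{u = \varphi\}$ itself. The difficulty is that $\mathcal L_K u$ is merely a Radon measure there (the positive part of $\mathcal L_K(u-\varphi)$ concentrates on the contact set), so one cannot differentiate the equation; instead one must extract the decay purely from the sign condition $u - \varphi \ge 0$ and the one-sided bound $\mathcal L_K(u-\varphi) \le f - \mathcal L_K\varphi$. Concretely, I would normalize by subtracting the appropriate Taylor polynomial of $\varphi$ at the contact point and rescale so that the oscillation of the normalized solution in $B_r$ is of unit size; a compactness argument then produces a limiting global profile $u_\infty \ge 0$ with $\mathcal L_K u_\infty \le 0$ (a nonnegative supersolution vanishing at the origin) and of subpolynomial growth, and a Liouville-type classification — together with the condition $\beta \ne 2s$ to rule out the borderline homogeneity — forces $u_\infty$ to be a polynomial of degree $\le \max\{1, \lceil 2s\rceil\}$, contradicting the assumed unit oscillation at the next scale unless the decay holds. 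Iterating this dichotomy across all dyadic scales and summing gives the pointwise $C^\alpha$ bound at $x_0$, and since $x_0 \in B_{1/2}$ is arbitrary and the estimates are uniform, one obtains $\|u\|_{C^\alpha(B_{1/2})} \le C$. The constant $\eps_0$ emerges from the gap in the Liouville classification / the Schauder gain, and depends only on $n,\lambda,\Lambda,s$, while the dependence on $\beta$ enters through the geometric series in the iteration when $\alpha = \beta < \max\{1,2s\}+\eps_0$.
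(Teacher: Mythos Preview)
Your overall architecture --- initial H\"older regularity, then a dichotomy at each scale between ``far from contact'' (interior Schauder) and ``near a contact point'' (growth estimate) --- is sound and is in fact how the paper organizes the argument. But the engine you propose for the near-contact case, namely a compactness/blow-up to a global nonnegative supersolution followed by a Liouville classification, has a real gap.

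The issue is the scaling of the right-hand side. You want to show $|u - \ell| \lesssim r^\alpha$ near a contact point with $\alpha = \max\{1,2s\}+\eps_0$, so you would rescale $\tilde u(x) = r^{-\alpha}(u-\ell)(z_r + rx)$. But then $\mathcal L_{\tilde K}\tilde u \le r^{2s-\alpha} f(z_r+rx)$, and since $\alpha > 2s$ the right-hand side blows up as $r\to 0$ unless $f$ happens to vanish at $z_r$. There is no natural way to center $f$: the equation $\mathcal L_K u = f$ holds only on $\{u>\varphi\}$, so you cannot evaluate it at the contact point to subtract off $f(z_r)$, and for general kernels there is no explicit barrier $\psi$ with $\mathcal L_K\psi \equiv f(z_r)$ globally (the obvious quadratic candidate has $\mathcal L_K(|x|^2) = +\infty$). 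So in the limit you do not obtain $\mathcal L_K u_\infty \le 0$, and your Liouville step never gets off the ground. The role you assign to the hypothesis $\beta\neq 2s$ is also not right: that condition is there to avoid the borderline in the Schauder estimates (Proposition~\ref{SE5}), not to separate homogeneities in a blow-up.

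The paper circumvents this entirely by a different mechanism: instead of blowing up $u$, it works with \emph{difference quotients} $u_h^e(x) = h^{-\alpha}(u(x+he)-u(x))$. These satisfy $\mathcal L_K u_h^e \ge f_h^e$ on the set $\{u_h^e > \tfrac12 r^{\eps_0}\}$, because the only points where the subsolution inequality could fail are those with $x+he$ on the contact set, and there $u_h^e \le \varphi_h^e \le [\varphi]_{C^\beta}h^{\beta-\alpha}$ is small. The right-hand side $f_h^e$ is bounded by $[f]_{C^{0,1}}$ uniformly in $h$ and $r$, so there is no blow-up under rescaling; a direct paraboloid-sliding oscillation estimate (no compactness, no Liouville) then gives the improvement $C^\alpha \to C^{\alpha+\eps_0}$ for $\alpha \le 1$. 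When $s>\tfrac12$ the paper uses yet another device --- an inductive weighted integral bound $\int |u|\,(\max\{r,|x|\})^{-(n+1+2s)}dx \le r^{\eps_0-1}$ combined with Harnack (Lemma~\ref{hi2}) --- to push up to $C^{2s+\eps_0}$. Neither step passes to a global limit.
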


\subsection{Fractional laplacian -- Optimal regularity and the geometry of the free boundary.} In the special case when $$K(y)=\frac{1}{|y|^{n+2s}}$$ the operator $\mathcal L_{K}$ 
reduces to the fractional Laplacian $\Delta^{s}$ and we obtain the optimal regularity of the solution.  As usual, we can 
characterize the points on the free boundary 
$$\Gamma:=\p \{u = \varphi\}  \cap B_1.$$
Precisely the set $\Sigma \subset \Gamma$ of {\it singular points} consists of those $y \in \Gamma$ such that $$(u - \varphi)(x)=o(|x-y|^{1+s}),$$ and $\Gamma \setminus \Sigma$ is the set of {\it regular points} (or stable points) of the free boundary.

\begin{thm}\label{T31intro}
Let $u$ be a solution to \eqref{opti1intro}, \eqref{opti2intro}, with
$$\|u\|_{L^1(\R^n, d \omega)}, \|\varphi\|_{C^\beta(B_1)}, \|f\|_{C^{\beta-2s}(B_1)} \le 1, \quad \quad \mbox{for some $\beta > 1 + s$.}$$
Then $u \in C^{1+s}(B_1)$ and
 $$\|u\|_ {C^{1+s}(B_{1/2})} \le C.$$
 Moreover, the free boundary $\Gamma$ is a $C^{1,\gamma}$ surface in a 
 neighborhood of each of its regular points. The constants $C, \gamma$ 
 depend on $n$, $s$, and $\beta$.
 
\end{thm}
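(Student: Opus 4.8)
The plan is to prove Theorem \ref{T31intro} in two stages: first the optimal $C^{1+s}$ regularity of the solution $u$, and then the $C^{1,\gamma}$ regularity of the free boundary near regular points.

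\textbf{Step 1: Optimal interior regularity.} First I would apply Theorem \ref{T20intro} to obtain the initial regularity $u \in C^\alpha$ with $\alpha = \min\{\beta, \max\{1,2s\}+\eps_0\}$; since $\beta > 1+s \ge \max\{1,2s\}$ when... actually one must be careful, $2s$ could exceed $1+s$ when $s>1$, but here $s\in(0,1)$ so $\max\{1,2s\}<2$ and $1+s<2$, and we get in particular $u\in C^{1,\gamma_0}$ for some small $\gamma_0$. The key improvement from $C^{1,\gamma_0}$ to $C^{1+s}$ is the standard optimal-regularity argument for the fractional obstacle problem à la Caffarelli--Salsa--Silvestre: near a free boundary point $y$, one studies the rescalings $(u-\varphi-\ell_y)(y+rx)/r^{1+s}$ where $\ell_y$ is the linearization of $\varphi$, uses the semiconvexity/monotonicity obtained from differentiating the equation, and shows via a blow-up and Liouville-type/monotonicity-formula argument (Almgren frequency or Weiss-type) that the solution cannot grow faster than $|x-y|^{1+s}$. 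This yields the $C^{1+s}$ bound with the stated dependence of $C$ on $n,s,\beta$. One must track carefully how the smoothness assumptions $\varphi\in C^\beta$, $f\in C^{\beta-2s}$ with $\beta>1+s$ feed in: they guarantee $\mathcal L_K(u-\varphi)$ is Hölder enough that the obstacle-problem solution $w=u-\varphi$ with right-hand side $g := f - \mathcal L_K\varphi \in C^{\beta-2s}$ (in $B_{1/2}$, after subtracting a smooth correction) falls under the scope of the classical theory.

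\textbf{Step 2: Classification of blow-ups and regular points.} At a free boundary point $y$, consider blow-up limits $u_0(x) = \lim r^{-(1+s)}(w)(y+rx)$ along subsequences. Using the frequency/Weiss monotonicity formula one shows each blow-up is a homogeneous global solution of the obstacle problem with zero obstacle and degree either $1+s$ or $\ge 2$ (and the degree-$2$ case is exactly the singular set $\Sigma$ by definition). The regular points are those where the blow-up has homogeneity $1+s$; by the classification of such global solutions (the half-space solution, up to rotation and scaling), near a regular point $w$ is well-approximated by $c\,((x-y)\cdot e)_+^{1+s}$.

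\textbf{Step 3: Free boundary regularity near regular points.} Finally, the $C^{1,\gamma}$ regularity of $\Gamma$ near a regular point follows from a boundary-Harnack / improvement-of-flatness scheme: the partial derivatives $\partial_e w$ for $e$ near the blow-up direction are nonnegative solutions of a linear nonlocal equation vanishing on the coincidence set; applying the boundary Harnack principle for the fractional Laplacian to quotients $\partial_e w/\partial_{e_n} w$ shows the direction of the normal to $\Gamma$ is Hölder continuous, which gives $\Gamma \in C^{1,\gamma}$. The main obstacle in the whole argument is the optimal-regularity step (Step 1): establishing the sharp $|x-y|^{1+s}$ growth requires the right monotonicity formula and blow-up analysis adapted to the variable right-hand side and the limited $C^\beta$ smoothness of the obstacle, and ensuring all constants depend only on $n,s,\beta$; Steps 2 and 3 are then relatively standard adaptations of the Caffarelli--Salsa--Silvestre machinery for $\Delta^s$.
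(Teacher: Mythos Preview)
Your outline follows essentially the same route as the paper: reduce to the thin obstacle problem for $\Delta^s$, use the Caffarelli--Salsa--Silvestre extension and Almgren's frequency formula to get the $C^{1+s}$ bound and to classify blow-ups, then invoke the standard CSS free-boundary machinery (boundary Harnack, etc.) at regular points. Steps~2 and~3 are, as you say, straightforward once Step~1 is in place.

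However, you correctly flag Step~1 as ``the main obstacle'' and then stop short of resolving it. The whole content of the paper's Section~6 is precisely this point: the Almgren monotonicity formula in CSS is proved for $\varphi\in C^{2,1}$, and the issue is to show it survives when one only has $\varphi\in C^{1,s+\delta}$. Your proposal says to ``track carefully how the smoothness assumptions feed in,'' but the actual mechanism is not obvious and is the paper's contribution. Concretely: one extends the obstacle to $\R^{n+1}$ by the mollified formula $\tilde\varphi(x,y):=\varphi*\rho_{|y|}$, which is smooth away from $\{y=0\}$ and satisfies $\|D^2\tilde\varphi\|\le C|y|^{s+\delta-1}$, hence $|y|^{-a}L_a\tilde\varphi$ is controlled in the weighted $L^2$ sense. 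With this extension one sets $\tilde u=u-\tilde\varphi$, proves a preliminary lemma comparing $F(r)$, $F'(r)$, and the weighted Dirichlet energy on the regime $F(r)\ge r^{2(1+\alpha)}$ (with $\alpha\in(s,s+\delta)$), and then shows that the error terms $H_i(r)$ arising in $(\log\Phi_{\tilde u})'$ are all $O(r^{\eps-1})$ and can be absorbed into the correction factor $(r+C_0r^{1+\eps})$. Without this specific extension and the accompanying error analysis, the frequency formula does not obviously go through at the stated regularity, so your Step~1 as written is a genuine gap rather than a routine verification.
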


Theorem \ref{T31intro} was obtained by Caffarelli, Salsa and Silvestre in \cite{CSS} . The main tool in the proof is to establish a version of 
Almgren's frequency formula for the ``extension" of $u$ to $\R^{n+1}$. However, Theorem \ref{T31intro} is proved in \cite{CSS} in the case when $\varphi \in C^{2,1}$ ( i.e. $\beta=3$). When $s=1/2,$ Guillen proved Theorem \ref{T31intro} in \cite{G}. In Section 6 we
show that the Almgren's monotonicity formula still holds when $\beta>1+s$ and therefore sharpen the result in \cite{CSS} and obtain Theorem \ref{T31intro}.

Theorem \ref{T31intro} yields the following result for the two-membrane problem. When $$K_1(y)=\frac{1}{|y|^{n+2s_1}}$$ we obtain the optimal regularity of the 
minimizing pair, i.e. $u_1 \in C^{1,s_1} $ and $u_2 \in C^{1+2s_2-s_1}$ and we can 
characterize the points on the free boundary 
$$\Gamma:=\p \{u_1 =u_2\}  \cap B_1,$$
as in the obstacle problem.

\begin{thm} [Optimal regularity] \label{T3}
Assume that the hypotheses of Theorem $\ref{T2}$ hold and $K_1$ is as above. Then the conclusion  of Theorem $\ref{T2}$ holds with $\alpha_1=1+s_1$.
Moreover, the set of regular points of the free boundary $\Gamma$ is locally a $C^{1,\gamma}$ surface.
\end{thm}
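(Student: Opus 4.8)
The plan is to recognize $u_1$ as the solution of the obstacle problem for $\Delta^{s_1}$ with obstacle $u_2$, and then to bootstrap the regularity of the pair $(u_1,u_2)$ until Theorem~\ref{T31intro} applies.

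\emph{Reduction.} Since $K_1$ corresponds to $\mathcal L_{K_1}=\Delta^{s_1}$, the relations \eqref{EL30}--\eqref{EL40} say exactly that $u_1$ solves \eqref{opti1intro}--\eqref{opti2intro} for $\mathcal L_{K_1}$ with obstacle $\varphi=u_2$ and right-hand side $f=f_1$ (indeed $u_1\ge u_2$, $\mathcal L_{K_1}u_1\le f_1$ in $B_1$, and $\mathcal L_{K_1}u_1=f_1$ on $\{u_1>u_2\}$). The coincidence set of this problem is $\{u_1=u_2\}$, so its free boundary is $\Gamma$ and its regular points are the regular points of $\Gamma$. Hence it suffices to show that the obstacle $u_2$ belongs to $C^\beta(B_1)$ for some $\beta>1+s_1$: replacing $\beta$ by $\min\{\beta,1+2s_1\}$ we may take $1+s_1<\beta\le 1+2s_1$, so $0<\beta-2s_1\le 1$ and $f_1\in C^{0,1}(B_1)\subset C^{\beta-2s_1}(B_1)$, and Theorem~\ref{T31intro} (after the standard normalization) gives both $u_1\in C^{1+s_1}(B_1)$ with the quantitative bound and the $C^{1,\gamma}$ regularity of $\Gamma$ near its regular points. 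Feeding $a=1+s_1$ into Step~(a) below then also yields $u_2\in C^{1+2s_2-s_1}(B_1)=C^{\alpha_1+2(s_2-s_1)}$, i.e.\ the full conclusion of Theorem~\ref{T2} with $\alpha_1=1+s_1$.

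\emph{The bootstrap.} Starting from $u_1\in C^{\alpha_1}$, $u_2\in C^{\alpha_2}$ as in Theorem~\ref{T2}, we raise the H\"older exponent of $u_2$ past $1+s_1$ in finitely many rounds, alternating two moves. \textbf{Step (a).} If $u_1\in C^a(B_1)$ with $2s_1<a<2$, then a direct estimate of the integral operator (using $u_1\in C^a$ locally, $u_1\in L^1(\R^n,d\omega_1)$ globally, and the bounds on $K_1$) gives $\mathcal L_{K_1}u_1\in C^{a-2s_1}$ locally; by \eqref{EL40} and $f_i\in C^{0,1}$ this yields $\mathcal L_{K_2}u_2=(f_1+f_2)-\mathcal L_{K_1}u_1\in C^{\min\{1,\,a-2s_1\}}$ locally, and the Schauder estimates of the Appendix for the order-$2s_2$ operator $\mathcal L_{K_2}$ give $u_2\in C^{b}(B_1)$ with
$$b=\min\{\,1+2s_2,\ a+2(s_2-s_1)\,\}.$$
\textbf{Step (b).} Since $u_1$ solves the obstacle problem for $\Delta^{s_1}$ with obstacle $u_2$, if $u_2\in C^{b}$ with $b>1+s_1$ then the Reduction finishes the proof; if $\alpha_1<b\le 1+s_1$ one shows that $u_1$ inherits the obstacle's regularity up to the threshold, $u_1\in C^{b}(B_1)$. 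As $s_2>s_1$, Step~(a) raises the exponent of $u_2$ by the fixed positive amount $2(s_2-s_1)$ over that of $u_1$; iterating (a) and (b), after finitely many rounds $b>1+s_1$ and we conclude.

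\emph{Main difficulty.} The delicate point is the claim in Step~(b) in the range $\alpha_1<b\le 1+s_1$: Theorem~\ref{T20intro} only gives $u_1\in C^{\max\{1,2s_1\}+\eps_0}=C^{\alpha_1}$, which does not improve, and Theorem~\ref{T31intro} requires $b>1+s_1$. One must prove that the solution of the obstacle problem for the fractional Laplacian with obstacle $\varphi\in C^{b}(B_1)$, $b\ne 2s_1$, is of class $C^{\min\{b,\,1+s_1\}}(B_1)$ — that is, it picks up the H\"older modulus of the obstacle below the intrinsic exponent $1+s_1$. This should follow from a localized version of the analysis behind Theorem~\ref{T31intro}: in the open set $\{u_1>u_2\}$ one has $\Delta^{s_1}u_1=f_1\in C^{0,1}$, so $u_1$ is smoother there and the regularity must be controlled only near $\Gamma$, where $u_1=u_2$ has precisely the regularity of the obstacle; there one compares $u_1$ with the obstacle and with suitable barriers, exploiting the gain afforded by the extension problem. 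The remaining points are routine: the arbitrarily small losses of the Schauder estimates at integer exponents are absorbed into $\eps_0$ in the final exponent, and the integrability $u_i\in L^1(\R^n,d\omega_i)$ needed to define the nonlocal operators at each stage holds by the existence proposition and the normalization in Theorem~\ref{T2}. (The number of rounds, hence the constant $C$, depends on $s_1,s_2$ and degenerates as $s_2\downarrow s_1$, consistently with the fact that the equal-order case is open.)
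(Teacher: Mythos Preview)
Your overall strategy is correct and is precisely the paper's: view $u_1$ as the solution of the obstacle problem for $\Delta^{s_1}$ with obstacle $u_2$, alternate Step~(a) (Schauder for $\mathcal L_{K_2}$, Proposition~\ref{SE}) and Step~(b) (obstacle problem for $\Delta^{s_1}$), and iterate until $u_2\in C^\beta$ with $\beta>1+s_1$, when Theorem~\ref{T31intro} finishes both the optimal regularity of $u_1$ and the free boundary statement.

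The gap you flag in Step~(b) is real: Theorem~\ref{T20intro} saturates at $C^{\max\{1,2s_1\}+\eps_0}$, Theorem~\ref{T31intro} needs $\beta>1+s_1$, and Theorem~\ref{T2} does not always deliver $\alpha_2>1+s_1$ (take $s_2$ close to $s_1$). However, your proposed resolution---``a localized version of the analysis behind Theorem~\ref{T31intro}''---does not work as stated. The Almgren frequency formula classifies the homogeneity of blow-ups as $1+s_1$ or $\ge 2$ and gives no information when the obstacle is only $C^b$ with $b\le 1+s_1$; the remaining barrier sketch is too vague to constitute a proof. The paper fills this gap differently: it invokes Silvestre~\cite{S2} (see the remark at the end of the Almgren discussion in Section~6), who proved by an independent technique that for the obstacle problem for $\Delta^{s_1}$ with obstacle $\varphi\in C^\beta$, $\beta\in(2s_1,1+s_1)$, the solution is $C^\alpha$ for every $\alpha<\beta$. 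With this ingredient in place of your ``Main difficulty'' paragraph, your bootstrap closes exactly as written.
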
 

\section{The proof of Theorem \ref{T1}}

In this section we prove the H\"older regularity of the minimizing pair $(u_1, u_2).$ The parameters $\lambda, \Lambda, n, s_1,s_2$ are called universal and any constant depending only on these parameters is called universal as well and it is usually denoted by $C, c$ (though it may change from line to line).

\medskip

\textit{Proof of Theorem $\ref{T1}$}. The proof follows from the standard De Giorgi iteration technique. For simplicity we sketch it for $f_i=0$ and $s_i<1$, since the arguments carry on without difficulty to the case of nonzero $f_i$'s and when one or both operators are local.

Assume that $$s_2 \ge s_1.$$

{\it Step 1. Caccioppoli inequality.} 
Let $\varphi$ be a cutoff function supported in $B_1$. The key observation is that for $\eps <1$,
$$\left(u_1+\eps \varphi^2 u_1^-,u_2+\eps \varphi^2 u_2^-\right)  \in \mathcal A.$$
Using the minimality of the pair $(u_1,u_2)$, we let $\eps \to 0$ and obtain
\begin{equation}\label{ein}
\mathcal E_{k_1}(u_1,\varphi^2 u_1^-) +  \mathcal E_{k_2}(u_2,\varphi^2 u_2^-) \ge 0.
\end{equation}

Notice that
$$- \mathcal E_{k}(u,\varphi^2 u^-)= \mathcal E_{k}(u^-,\varphi^2 u^-) + F_k(u),$$
and $$F_k(u):=- \mathcal E_{k}(u^+,\varphi^2 u^-)=2\int\int \varphi^2(x)\, u^+(x)\, u^-(y) \, \, k(x,y) dx dy \ge 0.$$

We use the identity
$$(a-b)(p^2a-q^2b)=(ap-bq)^2-ab(p-q)^2,$$
thus
$$\mathcal E_{k}(u^-,\varphi^2 u^-)  = 
 \mathcal E_{k}(\varphi u^-,\varphi u^-) - I_k(u)$$
with
$$I_k(u)=\int \int u^-(x)u^-(y)(\varphi(x)-\varphi(y))^2k(x,y) dx dy  \ge 0.$$ 
The identities above give $$ \mathcal E_{k}(\varphi u^-,\varphi u^-) + F_k(u) = -\mathcal E_{k}(u,\varphi^2 u^-) + I_k(u).$$

Next we bound above $I_k(u)$.

Assume that $\varphi$ is the usual cutoff function with $\varphi=1$ in $B_r$ and $\varphi=0$ outside $B_{r+\delta/2}$ for some $r \in (0,1-\delta]$.
When both $x$ and $y$ are in $B_{r+\delta}$ we use that
$$u^-(x)u^-(y)(\varphi(x) -\varphi(y))^2 \leq C \delta^{-2} [(u^-(x))^2+(u^-(y))^2 ]|x-y|^2.$$
When $x \in B_{r+\delta/2}$ and $y$ lies outside $B_{r+\delta}$ (and symmetrically the other case),
we use that 
$$k(x,y) \leq C\delta^{-n-2s} \omega(y).$$
Thus, we see that $I_k(u)$ is bounded above by
$$I_k(u) \le C \delta^{-2} \int_{B_{r+\delta}} (u^-)^2 dx + C \|u^-\|_{L^2(d \omega)} \delta^{-n-2s} \int_{B_{r+\delta}} u^- dx.$$
In this last inequality we used that $ \|u^-\|_{L^1(d \omega)} \leq  C \|u^-\|_{L^2(d \omega)}.$
We use these relations for $u_1$ and $u_2$ in the energy inequality \eqref{ein} together with the fact that $u_2^- \ge u_1^-$ in $B_1$. We obtain the desired Caccioppoli inequality for $u_2^-$:

\begin{equation}\label{Cin}
 \mathcal E_{k_2}(\varphi u^-_2,\varphi u_2^-) + F_{k_2}(u_2)\le  C_0 \delta^{-n-2} \int_{B_{r+\delta}} [(u^-_2)^2 + M_0 u_2^- ]\, \, dx.
\end{equation}
with
$$ M_0:= \|u_1^-\|_{L^2(d \omega_1)}+\|u_2^-\|_{L^2(d \omega_2)},$$
and $C_0$ universal.
More generally if $v_m = u_2+m$, we have
\begin{equation}\label{Cinm}
 \mathcal E_{k_2}(\varphi v_m^-,\varphi v_m^-) + F_{k_2}(v_m)\le  C_0 \delta^{-n-2} \int_{B_{r+\delta}} [(v_m^-)^2 + A_m v_m^-] \, \, dx,
\end{equation}
and $$M_m = \|(u_1+m)^-\|_{L^2(d \omega_1)}+\|(u_2+m)^-\|_{L^2(d \omega_2)}.$$
Moreover, for all constants $m \geq 0$, $M_m \leq M_0$ hence 
\begin{equation}\label{Cinm0}
 \mathcal E_{k_2}(\varphi v_m^-,\varphi v_m^-) + F_{k_2}(v_m)\le  C_0 \delta^{-n-2} \int_{B_{r+\delta}} [(v_m^-)^2 + M_0 v_m^-] \, \, dx.
\end{equation}

\begin{rem}\label{rem} Since $u_2$ is a subsolution for the $\mathcal L_{k_2}$ operator, $v_m^+:= (u_2-m)^+$ satisfies the same inequality \eqref{Cinm} with the constant $M_m$ replaced by $\|(u_2-m)^+\|_{L^2(d \omega_2)}$.\end{rem}

\medskip

{\it Step 2. The first De Giorgi lemma}. We write the first De Giorgi type lemma and provide a sketch of the proof (see also Lemma 3.1 in \cite{CCV}.)

\begin{lem}[$L^\infty$ bound]\label{l11} Assume $v_m : = u_2 +m$ satisfies \eqref{Cinm0} for all $0 \leq m \leq 1$ and some $M_0>0.$ There exists $\eps_0$ depending on the universal parameters and $M_0$ such that
 if $$\|u_2^-\|_{L^2(B_1)} \le \eps_0 (M_0),$$then $$ u_2^- \le 1 \quad \mbox{in $B_{1/2}$}.$$
\end{lem}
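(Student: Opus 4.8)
The plan is to run a standard De Giorgi nonlinear iteration, using the Caccioppoli inequality \eqref{Cinm0} as the energy input and a Sobolev inequality to gain integrability. Set $m_k = 1 - 2^{-k}$ so that $m_k \uparrow 1$, and let $v_k := u_2 + m_k$; on the radii side, take $r_k = \tfrac12 + 2^{-k-1}$ decreasing to $\tfrac12$, with $\delta_k = r_k - r_{k+1} \sim 2^{-k}$ and cutoffs $\varphi_k$ equal to $1$ on $B_{r_{k+1}}$, supported in $B_{r_k}$. Define the energy quantity $a_k := \| v_k^- \|_{L^2(B_{r_k})}^2$ (or, more naturally for this nonlocal setting, the full tail-weighted $\|v_k^-\|_{L^2(d\omega)}^2$, but since $v_k^- $ is supported near $\{u_2 \le -m_k\} \subset B_1$ the two are comparable up to universal constants). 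The goal is the recursive inequality $a_{k+1} \le C^k a_k^{1+\beta}$ for some $\beta>0$, from which $a_k \to 0$ provided $a_0 = \|u_2^-\|_{L^2(B_1)}^2$ is smaller than a threshold $\eps_0^2$ depending only on universal constants and $M_0$; and $a_\infty = 0$ gives $u_2^- \le 1$ in $B_{1/2}$.

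First I would extract the gain in integrability. The term $\mathcal E_{k_2}(\varphi_k v_k^-, \varphi_k v_k^-)$ controls the $H^{s_2}$ seminorm of $\varphi_k v_k^-$ from below (by the $\lambda$ lower bound on $k_2$), and hence by fractional Sobolev embedding controls $\|\varphi_k v_k^-\|_{L^{2^*}}^2$ with $2^* = \frac{2n}{n-2s_2}$ (if $n > 2s_2$; otherwise any exponent, which only makes things easier). Then by Hölder on the sublevel set $\{v_{k+1}^- > 0\} \cap B_{r_{k+1}} \supseteq \{v_k^- > 2^{-k-1}\}$ — note the elementary inclusion $v_{k+1}^- > 0 \Rightarrow u_2 < -m_{k+1} \Rightarrow v_k^- = -(u_2+m_k) > m_{k+1}-m_k = 2^{-k-1}$ — one bounds $a_{k+1} = \|v_{k+1}^-\|_{L^2(B_{r_{k+1}})}^2$ by $\|\varphi_k v_k^-\|_{L^{2^*}}^2 \,|\{v_k^- > 2^{-k-1}\}|^{2/n}$. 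The measure of the sublevel set is controlled by Chebyshev: $|\{v_k^- > 2^{-k-1}\}| \le 4^{k+1} \|v_k^-\|_{L^2(B_{r_k})}^2 = 4^{k+1} a_k$. Combining, $a_{k+1} \le C\, 4^{k\cdot (2/n)} a_k^{2/n} \cdot (\text{RHS of Caccioppoli})$.

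The right-hand side of \eqref{Cinm0} is $C_0 \delta_k^{-n-2}\int_{B_{r_k}}[(v_k^-)^2 + M_0 v_k^-]\,dx$. The quadratic term is just $C_0 2^{k(n+2)} a_k$. The linear term $M_0 v_k^-$ is the place to be slightly careful: on its support $\{v_k^- > 0\}$ we have $v_k^- \le 1$ (since $v_k = u_2 + m_k$ and we are in the regime where the iteration keeps $v_k^- \le 1$ — which is exactly the hypothesis range $0 \le m \le 1$ of Lemma \ref{l11}, and is preserved because $a_k$ decreasing keeps $u_2^- $ bounded), so $\int v_k^- \le |\{v_k^- > 0\}\cap B_{r_k}| \le |\{v_{k-1}^- > 2^{-k}\}| \le 4^k a_{k-1}$, giving a bound in terms of $a_{k-1}$; alternatively, and more cleanly, $\int v_k^- \le \|v_k^-\|_{L^2}|\{v_k^->0\}|^{1/2} \le a_k^{1/2}\cdot(4^k a_{k-1})^{1/2}$. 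Either way, feeding everything back, $a_{k+1} \le C^k\, a_k^{2/n}\,(a_k + M_0\, a_{k-1}^{1/2} a_k^{1/2})$; using $a_k \le a_{k-1}$ one obtains a clean inequality of the form $a_{k+1} \le C^k (1+M_0)\, a_k^{1 + 1/n}$ or $a_{k+1}\le C^k(1+M_0) a_k^{1+\min(1/n, \,1/n)}$ — the key point being the exponent strictly exceeds $1$. The main obstacle — and the only genuinely delicate point — is handling the linear $M_0 v_k^-$ term so that the recursion still closes with a superlinear power; this is why $\eps_0$ must depend on $M_0$, and it forces the two-index argument (or the observation $v_k^-\le 1$ on its support) rather than a naive one-step estimate. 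The fast geometric closure lemma (if $a_{k+1}\le C^k a_k^{1+\beta}$ and $a_0 \le C^{-1/\beta^2}$ then $a_k\to 0$) is then routine.
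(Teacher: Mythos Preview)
Your approach is essentially the paper's: the same truncation levels $m_k=1-2^{-k}$, shrinking radii, Caccioppoli plus fractional Sobolev plus Chebyshev on the sublevel sets, leading to $a_{k+1}\le \bar C\,2^{Mk}a_k^{1+\sigma}$ and the standard fast-geometric closure. Two small corrections are in order. First, the Sobolev gain produces a factor $|A_k|^{2s_2/n}$, not $|A_k|^{2/n}$. Second, your first option for the linear term --- asserting $v_k^-\le 1$ on its support --- is circular: the hypothesis ``$0\le m\le 1$'' in the lemma only specifies the range of levels $m$ for which the Caccioppoli inequality \eqref{Cinm0} is assumed, it says nothing about a pointwise bound on $v_m^-$, and no $L^\infty$ bound on $u_2^-$ is available before the iteration closes. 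Your Cauchy--Schwarz alternative $\int v_k^- \le a_k^{1/2}\,|\{v_k^->0\}|^{1/2}$ is correct and is exactly what the paper does; with it the recursion closes and the dependence of $\eps_0$ on $M_0$ is clear.
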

\begin{proof} We apply \eqref{Cinm0},  with ($j \geq 2$)
$$m=m_j:= 1- 2^{-j}, \quad r=r_j:= \frac 1 2 +2^{-j}, \quad \delta=\delta_j:=2^{-j}.$$ Using that $F_{k_2}(v_{m_j}) \geq 0$ together with Sobolev inequality we get ($1/2^*= 1/2-s_2/n$)
\begin{align}\label{Cinmk}
\left(\int_{B_{r_j}}(v_{m_j}^-)^{2^*} \right)^{2/2^*} & \le  C_0 \delta_j^{-n-2} \int_{B_{r_j+\delta_j}} [(v_{m_j}^-)^2 + M_0 v_{m_j}^-] \, \, dx\\ \nonumber & := R_j.
\end{align}

Call,
$$a_j:= \int_{B_{r_j}}(v_{m_{j}}^-)^2$$
and 
$$A_j:= \{v_{m_{j}}<0\} \cap B_{r_j}.$$
Applying Holder's inequality to the left-hand-side of \eqref{Cinmk} and using the notation above we get
\begin{equation}\label{ak}
a_{j} \leq |A_{j}|^{\frac{2 s_2}{n}}R_j \leq  |A_{j}|^{\frac{2 s_2}{n}} (C_0 2^{Mj} a_{j-1} + M_0 a_{j-1}^{1/2} |A_{j}|^{1/2}),
\end{equation} for some large $M.$ Since on $A_{j}$, $v_{m_{j-1}} < -2^{j}$, we easily obtain that
$$a_{j-1} \geq |A_{j}|2^{-2j}.$$ Thus, \eqref{ak} gives (for some positive $\sigma$ and with $\bar C$ depending on the universal constants and $M_0$)
$$a_{j} \leq \bar C 2^{Mj} a_{j-1}^{1+\sigma}.$$ Standard De Giorgi iteration gives that if $a_2$ is small enough (depending on $\bar C$) $a_j \to 0$ as $j \to \infty$ and from this we deduce our claim.
\end{proof}

Our minimization problem remains invariant after multiplication with a constant. Thus, after multiplication with a small constant we may apply Lemma \ref{l11} and obtain the $L^\infty$ bound for $u_2$ in $B_{1/2}.$

\medskip

{\it Step 2. The second De Giorgi lemma and the H\"older continuity of $u_2$}. In order to obtain the H\"older continuity of $u_2,$ we need to iterate the next Lemma \ref{l12}, and  this is point where we need $s_2 \ge s_1$. 

Notice that in general the minimization problem is not invariant after a dilation in the space variable. Indeed, if $\tilde u_i(x)=u_i(\rho x)$ then
$$(\tilde u_1,\tilde u_2) \quad \mbox{minimizes the energy} \quad \mathcal  \rho^{2(s_2-s_1)} \mathcal E_{k_1}(\tilde u_1,\tilde u_1) + \mathcal E_{k_2}(\tilde u_2,\tilde u_2) .$$ 
Thus if $\rho \le 1$ the arguments above apply and the Caccioppoli inequality \eqref{Cin} holds for $\tilde u_2$ with 
$$\tilde M = \rho^{2(s_2-s_1)} \|\tilde u_1^-\|_{L^2(d \omega_1)}+\|\tilde u_2^-\|_{L^2(d \omega_2)} \le  \tilde M_0:= \|\tilde u_1^-\|_{L^2(d \omega_1)}+\| \tilde u_2^-\|_{L^2(d \omega_2)}.$$

Notice also that \begin{equation}\label{L2}\|\tilde u_i\|_{L^2(\R^n \setminus B_{1/\rho}, d \omega_i)} \sim  \rho^{s_i} \|u_i\| _{L^2(\R^n \setminus B_ 1,d \omega_i)}.\end{equation}

\begin{lem}[Oscillation decay]\label{l12}
Assume that $u_2$ satisfies  \eqref{Cinm}, for all constants $m$. Suppose that for some $R\geq 1$, 
$$u_1 \geq u_2 \quad \text{in $B_R$,}$$
and
$$|u_2| \le 1 \quad \mbox{in $B_R$}, \quad \quad \|u_1^-\|_{L^2(\R^n \setminus B_R,d \omega_1)} \le \mu, \quad \|u_2^-\|_{L^2(\R^n \setminus B_R,d \omega_2)}  \le \mu, $$ with $\mu$ universal.
Then in $B_1$ either $u_2 \le 1- \mu$ or $u_2 \ge -1 + \mu$.
\end{lem}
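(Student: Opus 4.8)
The plan is to run the classical second De Giorgi lemma, adapted to the nonlocal two-membranes setting, treating $u_2$ essentially as a subsolution of $\mathcal{L}_{k_2}$ thanks to the Caccioppoli inequality \eqref{Cinm} (and Remark \ref{rem} for the ``$+$'' version). The dichotomy ``either $u_2 \le 1-\mu$ or $u_2 \ge -1+\mu$ in $B_1$'' is proved by contradiction: suppose both the set $\{u_2 > 1-\mu\}\cap B_1$ and the set $\{u_2 < -1+\mu\}\cap B_1$ have positive measure; we will show that the energy estimates force the ``measure of the intermediate level set'' to decay, and then use a De Giorgi isoperimetric (intermediate value) lemma to derive a contradiction.

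First I would record that, by the $L^\infty$ bound $|u_2|\le 1$ in $B_R$ and the tail control $\|u_2^-\|_{L^2(\mathcal{C}B_R,d\omega_2)}\le\mu$, the nonlocal contributions to \eqref{Cinm} coming from outside $B_R$ are harmless: the relevant constant $A_m$ in \eqref{Cinm}, when we consider truncations at levels close to $1$ (resp. close to $-1$), is controlled by $\mu$ plus a universal constant, because only the part of $u_2$ (or $u_1$, using $u_1\ge u_2$ in $B_R$) lying below the truncation level matters, and inside $B_R$ this part is bounded. Next, I would set up the standard energy iteration: for levels $m_k$ increasing to $1-\mu$ (and a parallel family decreasing to $-1+\mu$, using Remark \ref{rem}), apply \eqref{Cinm} on a sequence of shrinking balls between $B_2$ and $B_1$, combine with the fractional Sobolev inequality as in \eqref{Cinmk}, and conclude that if at some initial stage the measure of the ``bad'' level set $\{u_2 \text{ near } 1\}\cap B_2$ (resp. $\{u_2 \text{ near } -1\}\cap B_2$) is small, then $u_2 \le 1-\mu$ (resp. $u_2\ge -1+\mu$) in $B_1$ — this is exactly the conclusion of Lemma \ref{l11} applied to suitable affine rescalings $\delta^{-1}(1-\mu - u_2)$ and $\delta^{-1}(u_2+1-\mu)$.

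The heart of the matter is then the De Giorgi isoperimetric lemma: if the measure of $\{u_2 \ge 1-\mu\}\cap B_1$ is bounded below AND the measure of $\{u_2 \le -1+\mu\}\cap B_1$ is bounded below, then the measure of the intermediate strip $\{-1+\mu < u_2 < 1-\mu\}\cap B_2$ is bounded below by a universal constant. For the local case this is the classical statement via the Sobolev/Poincaré inequality on the truncated function; for the nonlocal case one uses instead a version controlling $\mathcal{E}_{k_2}$ of a truncation of $u_2$ from below by the product of the measures of the two extreme sets — and here the energy bound $\mathcal{E}_{k_2}(\varphi v_m^-,\varphi v_m^-)\le C$ (universal, from \eqref{Cinm} together with $|u_2|\le 1$ and the $\mu$-control of tails) gives an upper bound on that same energy, forcing the intermediate set to be non-negligible. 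Iterating the energy decay over dyadic level increments from $-1+\mu$ to $1-\mu$ and combining with this lower bound on the intermediate set at each stage produces a contradiction once $\mu$ is fixed small and universal: the total measure available in $B_2$ cannot absorb infinitely many disjoint strips each of universal measure.

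The main obstacle I anticipate is making the tail terms genuinely universal and harmless throughout the iteration: unlike in the pure one-equation De Giorgi argument, here the Caccioppoli inequality for $u_2$ carries the constant $M_m$ built from the negative parts of \emph{both} $u_1$ and $u_2$ outside the relevant ball, and one must check that replacing $u_2$ by $1-\mu-u_2$ (or by $u_2+1-\mu$) and rescaling keeps all such constants controlled by a universal multiple of $\mu$ — this is where the hypotheses $u_1\ge u_2$ in $B_R$, $|u_2|\le 1$ in $B_R$, and the two $L^2(d\omega_i)$ tail bounds are each used, and where the choice $R\ge 1$ (so that $B_2\subset B_R$ after the implicit rescaling, or more precisely so that there is room between the ball where we have full information and the ball where we draw the conclusion) enters. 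Once the tails are dispatched, the remaining argument is the standard De Giorgi machinery and should go through without new ideas.
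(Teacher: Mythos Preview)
Your overall strategy---reduce to the first De Giorgi lemma after showing the measure of a ``bad'' sublevel set is small---is the right one, and your handling of the tail constants $M_m$ via the hypotheses $u_1\ge u_2$ in $B_R$ and the $L^2(d\omega_i)$ bounds is correct. However, the mechanism you propose for the measure decay is different from the paper's, and your description of it is somewhat muddled.

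You plan to use a De Giorgi isoperimetric (intermediate-value) lemma: if both extreme level sets have positive measure, the intermediate strip has universal measure, and iterating over dyadic levels accumulates too much mass. The paper bypasses this entirely. It exploits the extra nonnegative term
\[
F_{k_2}(v_j)=2\int\!\!\int \varphi^2(x)\,v_j^+(x)\,v_j^-(y)\,k_2(x,y)\,dx\,dy
\]
that appears on the left side of the Caccioppoli inequality \eqref{Cin}. Setting $v_j=2^j(u_2+(1-2^{-j}))$, the paper assumes $|\{u_2>0\}\cap B_1|\ge \tfrac12|B_1|$ (otherwise work with $(u_2-m)^+$ via Remark~\ref{rem}) and observes: on $\{u_2>0\}$ one has $v_j^+\ge 2^j-1$, so if $|A_{j+1}|=|\{v_{j+1}<0\}\cap B_1|>\delta_0$ then a direct lower bound on the kernel gives $F_{k_2}(v_j)\gtrsim 2^j\delta_0$. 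On the other hand Caccioppoli bounds $F_{k_2}(v_j)\le C\delta^{-n-2}$. These are incompatible once $j$ is large universal, so $|A_{\bar j+1}|\le\delta_0$, and Lemma~\ref{l11} applied to $v_{\bar j+1}$ finishes.

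So the nonlocal cross term $F_{k_2}$ itself plays the role of the isoperimetric inequality---no intermediate strip is ever considered, and no infinite accumulation argument is needed. Your sentence ``the energy bound $\ldots$ forces the intermediate set to be non-negligible'' conflates the two mechanisms: the lower bound on the nonlocal energy by a product of measures (which \emph{is} essentially $F_{k_2}$) does not by itself say anything about the intermediate set; rather, combined with the Caccioppoli upper bound it directly forces one of the extreme sets to be small. Your route can likely be made to work (it is closer to the local De Giorgi argument or to \cite{CCV}), but the paper's argument is shorter and more natural in the nonlocal setting.
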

\begin{proof} Let us assume first that
$$|u_2| \leq 1 \quad \text{in $\R^n$.}$$ 
Assume that 
\begin{equation}\label{measure}
|\{u_2 >0\} \cap B_1| \geq \frac 1 2 |B_1|.
\end{equation}
We will show that there is a universal constant $\eta$ such that $u_2 \geq -1+\eta$ in $B_1.$ 
Let,
$$v_j:= 2^j (u_2 + (1-2^{-j})), \quad A_j:= \{v_j <0\} \cap B_1.$$ We aim to show that there is a large enough $j$ such that 
\begin{equation}\label{goal}
|A_{j+1}| \leq \delta_0
\end{equation}
with $\delta_0$ universal to be made precise later.

Assume by contradiction that 
$$|A_{j+1}| > \delta_0$$
and let us choose $\delta << \delta_0$ so that
\begin{equation}\label{m2}
|A_{j+1} \cap B_{1-\delta}| \geq \frac{\delta_0}{2}.
\end{equation}
By Caccioppoli inequality \eqref{Cin} for $v_j$ we obtain
\begin{equation}\label{FC}
F_{k_2}(v_j) \leq C \delta^{-n-2}
\end{equation}
where we have used that $v_j^- \leq 1$ in $\R^n,$ and that $u_1 \geq u_2,$ so that the corresponding constant $M_j$ in \eqref{Cin} is bounded by a universal constant $\bar M$.

On the other hand,
\begin{align*}
F_{k_2}(v_j):= & 2\int\int \varphi^2(x)\, v_j^+(x)\, v_j^-(y) \, \, k_2(x,y) dx dy \ge \\
& c \int_{B_1}v_j^+(y)dy \int_{A_{j+1} \cap B_{1-\delta}} v_j^-(x)dx \ge \\
&c( 2^{j}-1)|A_{j+1} \cap B_{1-\delta}||B_1| \geq  2^j c \delta_0. 
\end{align*}
In the third inequality above we used that
$$v_j^- \geq \frac 1 2 \quad \text{on $A_{j+1}$}$$
and \eqref{measure}.

Thus, \eqref{m2} is violated if $j$ is large enough. Denote such $j$ by $\bar j.$

Now we can apply Lemma \ref{l11} to $v_{\bar j+1}$ and choose $\delta_0=\eps_0(2\bar M)$ where $\bar M$ is the universal constant that bounds all the $M_j$'s (as observed above.) We obtain the conclusion with $\eta=2^{-(\bar j+1)}.$

Now assume that $|u_2| \leq 1$ in $B_R$ and $u_1 \geq u_2$ in $B_R$, for $R \geq 1.$ Let also
$$\|u_i^-\|_{L^2(\R^n \setminus B_R, d\omega_i)} \leq \eps.$$ Then, for $\eps$ small enough the argument above still holds for the fixed $\bar j$. Indeed  one can still guarantee that $M_{\bar j} \leq 2\bar M$ for $\eps$ small enough. 

Finally, if \eqref{measure} does not hold, then we can work with the Caccioppoli inequality for $(u_2-m)^+$ and obtain that $u_2$ separates from the top (see Remark \ref{rem}).
\end{proof}

\smallskip

Finally we can iterate Lemma \ref{l12} and obtain the interior $C^\alpha$ Holder continuity of $u_2$. Indeed, after a multiplication by a constant we may assume that $\|u_i\|_{L^2(d \omega_i)}$ are sufficiently small and $|u_2| \le 1$ in $B_{1/2}$. Then we perform an initial dilation of size $R_0$, and we may apply Lemma \ref{l12}. Notice that the hypotheses are satisfied thanks to \eqref{L2}. Moreover it is easy to check that the hypotheses hold for the sequence of H\"older rescalings $$\left(\frac{2}{2-\mu} \right)^{m-1} u_2(R_0^{-m} x) + const, \quad \quad m=1,2,..$$ provided that $R_0$ is chosen sufficiently large, and we may apply Lemma \ref{l12} indefinitely.

\medskip

{\it Step 3. The second De Giorgi lemma and the Holder continuity of $u_1$}. Next we obtain the H\"older continuity of $u_1$ by thinking that $u_2 \in C^\alpha$ is a fixed obstacle lying above, and $u_1$ minimizes $\mathcal E_{k_1}(u_1,u_1)$ among admissible functions. 

Notice that since $|u_2| \leq 1$ and $u_1 \geq u_2$ we can obtain an $L^\infty$ bound for $u_1$ by applying the (standard) first De Giorgi lemma to $(u_1-1)^+$. Indeed in the set $u_1>1$, $u_1$ solves the equation $\mathcal L_{k_1} u_1=0.$

The H\"older continuity of $u_1$ follows by iterating the following version of the oscillation decay lemma.

\begin{lem}\label{l13} Assume that for some $R \ge 1$
$$|u_1| \le 1 \quad \mbox{in $B_R$}, \quad \quad \|u_1\|_{L^2(\R^n \setminus B_R,d \omega)} \le \mu, \quad osc_{B_1} u_2 \le 1/4.$$
Then in $B_1$ either $u_1 \le 1- \mu$ or $u_1 \ge -1 + \mu$.
\end{lem}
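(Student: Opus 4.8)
The plan is to prove Lemma \ref{l13} by a direct adaptation of the argument used for the oscillation decay of $u_2$ (Lemma \ref{l12}), treating $u_1$ as a \emph{subsolution} for $\mathcal L_{k_1}$ that stays below the prescribed obstacle $u_2$. The point is that, unlike $u_2$, the function $u_1$ is a supersolution for its own operator (by \eqref{EL1}), so $(u_1-m)^+$ is not automatically a subsolution; instead one works with $(u_1+m)^-$, which corresponds to pushing $u_1$ up, a competitor that is always admissible since increasing $u_1$ does not violate $u_2 \le u_1$. So first I would record the Caccioppoli inequality for $(u_1+m)^-$: using the competitor $(u_1+\eps\varphi^2(u_1+m)^-, u_2)\in\mathcal A$ and the minimality of the pair, one obtains $\mathcal E_{k_1}(u_1, \varphi^2(u_1+m)^-)\ge 0$, and then the same algebraic identities as in Step 1 (the $(a-b)(p^2a-q^2b)$ identity and the splitting into $\mathcal E_{k_1}(\varphi v_m^-,\varphi v_m^-)+F_{k_1}(v_m)$) give, for $v_m:=u_1+m$,
\begin{equation*}
\mathcal E_{k_1}(\varphi v_m^-,\varphi v_m^-)+F_{k_1}(v_m)\le C_0\delta^{-n-2}\int_{B_{r+\delta}}\left[(v_m^-)^2 + \widetilde M_0\, v_m^-\right]dx,
\end{equation*}
where $\widetilde M_0$ bounds the tail terms $\|(u_1+m)^-\|_{L^2(d\omega_1)}$ uniformly in $m\ge 0$; here the cross term $\mathcal E_{k_1}(u_2,\cdot)$ does not appear because only the $u_1$-energy responds to this variation.

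Next, as in the proof of Lemma \ref{l12}, I would assume by normalization that $|u_1|\le 1$ on all of $\R^n$ (the cutoff outside $B_R$ being absorbed into a small error, exactly as in the $R\ge 1$ reduction there), and that the ``half-measure'' alternative $|\{u_1>0\}\cap B_1|\ge \frac12|B_1|$ holds. I would then run the dyadic scheme $v_j:=2^j(u_1+(1-2^{-j}))$, $A_j:=\{v_j<0\}\cap B_1$, and show $|A_{j+1}|$ becomes smaller than a universal $\delta_0$ for some large $\bar j$: otherwise the energy term $F_{k_1}(v_j)\ge c\,2^j\delta_0$ grows geometrically in $j$, contradicting the Caccioppoli bound $F_{k_1}(v_j)\le C\delta^{-n-2}$. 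Once $|A_{\bar j+1}|\le \delta_0$, the first De Giorgi lemma (Lemma \ref{l11} applied to the $\mathcal L_{k_1}$ operator, in its standard form for the subsolution $u_1$, or rather to $v_{\bar j+1}$) upgrades smallness of the bad set to $u_1\ge -1+\eta$ in $B_1$ with $\eta=2^{-(\bar j+1)}$. If instead the half-measure alternative fails, then $|\{u_1<0\}\cap B_1|\ge\frac12|B_1|$ and one uses the Caccioppoli inequality for $(u_1-m)^+$ — which \emph{is} valid here precisely because $u_1$ is a subsolution in $\{u_1>u_2\}$ and $u_2$ oscillates by at most $1/4$, so on the relevant level sets $u_1>u_2$ and the obstacle constraint is inactive — to conclude $u_1\le 1-\eta$ from the top.

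The step where the obstacle genuinely enters — and the one I expect to be the main obstacle — is the ``separation from the top'' case: when running the decay from above via $(u_1-m)^+$, the competitor $(u_1-\eps\varphi^2(u_1-m)^+, u_2)$ is admissible only where $u_1-\eps\varphi^2(u_1-m)^+\ge u_2$, so one must know that on the support of the relevant truncation $u_1$ is strictly above $u_2$. This is exactly what the hypothesis $\mathrm{osc}_{B_1}u_2\le 1/4$ buys: combined with $|u_1|\le 1$ and the half-measure (from below) assumption, the levels $m$ used in the iteration can be taken in a range where $\{u_1>m\}\subset\{u_1>u_2\}$ in $B_1$, so $(u_1-m)^+$ solves (resp. is a subsolution of) the unconstrained equation there and the Caccioppoli inequality of Remark \ref{rem}-type applies with a clean constant. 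Handling this carefully — choosing the truncation levels and the oscillation threshold $1/4$ so that the obstacle is provably inactive wherever the energy estimate is used, and checking that the tail contributions from outside $B_R$ remain controlled by $\mu$ through the scaling \eqref{L2} — is the crux; the rest is the now-routine De Giorgi machinery from Steps 1 and 2.
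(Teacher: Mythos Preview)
Your approach is correct and leads to the same conclusion, but it is organized differently from the paper, and misses a simplification that makes the paper's proof essentially three lines. The paper first splits on the \emph{value of $u_2$ at the origin}: if $u_2(0)\ge -\tfrac12$, then the constraint $u_1\ge u_2$ together with $\mathrm{osc}_{B_1}u_2\le \tfrac14$ already gives $u_1\ge u_2\ge -\tfrac34$ in $B_1$, and the alternative $u_1\ge -1+\mu$ holds with no De Giorgi argument at all. In the remaining case $u_2(0)\le -\tfrac12$, the oscillation bound gives $u_2\le -\tfrac14$ throughout $B_1$, so the obstacle sits uniformly below the level $0$; then on $\{u_1>0\}$ the constraint is automatically inactive and $\mathcal L_{k_1}u_1=0$ there, and the standard half-measure dichotomy on $|\{u_1>0\}\cap B_1|$ gives either separation from below (using only the supersolution property $\mathcal L_{k_1}u_1\le 0$) or separation from above (using that $u_1$ solves the equation on $\{u_1>0\}$).

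Your route dispenses with the initial check on $u_2(0)$ and goes directly to the half-measure dichotomy for $u_1$. In the ``from below'' case you use the supersolution property exactly as the paper does. In the ``from above'' case you need to locate $u_2$ in order to know the obstacle is inactive on the relevant level sets; you correctly flag this as the crux but leave it implicit. The missing step is simply: $|\{u_1<0\}\cap B_1|>0$ forces $u_2\le u_1<0$ at some point, so $\mathrm{osc}_{B_1}u_2\le \tfrac14$ gives $u_2<\tfrac14$ in $B_1$, and hence $\{u_1>m\}\subset\{u_1>u_2\}$ for every level $m\ge \tfrac14$; since the De Giorgi iteration for $(u_1-m)^+$ uses only $m_j=1-2^{-j}\ge \tfrac12$, the admissibility of the competitor $(u_1-\eps\varphi^2(u_1-m)^+,u_2)$ is guaranteed and the Caccioppoli inequality holds with clean constants. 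So your sketch completes, but the paper's preliminary case split on $u_2(0)$ handles the ``obstacle is high'' scenario for free and makes the remaining case cleaner by placing the obstacle below level $0$ rather than below $\tfrac14$.
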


The proof of Lemma \ref{l13} is a variation of the proof above.
Indeed, if $u_2(0) \ge -\frac 12$ then the conclusion is obvious since $u_1 \ge u_2 \ge -\frac 34.$ 

If $u_2(0) \le -\frac 1 2$, we distinguish two cases. When $|\{u_1>0\} \cap B_1| > 1/2$, we use that $\mathcal L_{k_1}u_1 \le 0$ hence we apply De Giorgi technique to conclude that $u_1\geq -1+\mu.$

Otherwise, since $u_2 \le -\frac 14$ in $B_1$, $u_1$ is not constrained in the set $\{u_1>0\}$ and $\mathcal L_{k_1} u_1=0$ there. Again,  we can apply De Giorgi technique and conclude $u_1 \leq 1-\mu.$
\qed

\section{Translation invariant kernels and viscosity solutions}

In this section we investigate further properties of the minimizing pair $(u_1,u_2)$ when the kernels $k_i$ are more regular. 
More precisely, from now on we assume that the kernel $k$ used in the definition of the energy $\mathcal E_k$ in \eqref{E} is translation invariant i.e.
$$k(x,y)=K(x-y).$$
Here the kernel $K$ satisfies $K(y)=K(-y)$ and it is comparable to the kernel of $(-\Delta)^s$ i.e.
\begin{equation}\label{Kc}
 \frac{\lambda}{|y|^{n+2s}} \leq K(y) \leq  \frac{\Lambda}{|y|^{n+2s}}, \quad 0<\lambda \leq \Lambda. 
\end{equation}
 The integro-differential operator associated to this kernel can be written as
\begin{equation}\label{I}
\mathcal L_K w(x) := PV \int_{\R^n}(w(y)-w(x)) K(y-x) dy.
\end{equation}
Notice that the value $\mathcal L_K w(x)$ is well-defined as long as $w \in L^1(\R^n, d \omega)$ and $w$ is $C^{1,1}$ at $x$.

In the case $s=1$, of local operators defined in \eqref{EA}, we assume that the matrix $A$ is constant, and therefore $\mathcal L_A$ is a second order operator with constant coefficients.

\subsection{Viscosity properties of the minimizing pair}

To study further regularity of the minimizing pair, we adopt the point of view of viscosity solutions.

\begin{defn} Given a function $w: \R^n \to \R$, upper (lower) semicontinuous in $\bar B_1$ and a $C^2$ function $\phi$ defined in a neighborhood $N$ of a point $x \in B_1$, we say that $\phi$ touches $w$ by above (resp. below) at $x$ if 
$$\phi(x)=w(x), \quad \phi(y) > w(y)\quad  (\phi(y) < w(y)) \quad \text{for every $y \in N \setminus \{x\}$}.$$
\end{defn}

We remark that at any point $x$ where $w$ is touched by above or below, $\mathcal L_K w (x)$ is well-defined, though it may be infinite. Indeed, say $w$ is touched by below by $\phi$ at $x$ then 
$$\mathcal L_K w(x)=\int_0^\infty a_w(r) r^{-1-2s}dr \in (-\infty, +\infty]$$
where $a_w(r)$ represents the averages of $w$ on $\p B_r$
$$a_w(r)= \fint_{\p B_r(x)} (w(y)-w(x))K(y-x)r^{n+2s}dy$$
and for $r$ small (since $K$ is symmetric) $$a_w(r) \geq a_\phi(r) \geq -Cr^2.$$
\begin{defn} A function $w: \R^n \to \R$, upper (lower) semicontinuous in $\bar B_1$, is said to be a viscosity subsolution (supersolution) to $\mathcal L_K w=f,$ $f$ continuous in $B_1$, and we write $\mathcal L_K w \geq f$ ($\mathcal L_K w \leq f$), if at any point $x \in B_1$ where $w$ is touched by above (resp. below) by a quadratic polynomial $P$, we have
$$\mathcal L_K w (x) \geq f(x), \quad (\mathcal L_K w (x) \leq f(x)).$$
\end{defn}

A viscosity solution is a function $w$ that is both a subsolution and a supersolution.

Next we show that distributional supersolutions (subsolutions) are also viscosity supersolutions (subsolutions). We sketch the proof since we will use the same argument in a slightly different context.

\begin{lem}\label{visc}
Assume that $\mathcal L_K w \le f$ in the distribution sense with $w$, $f$ continuous functions in $B_1$. Then $\mathcal L_K w \le f$ in the viscosity sense. 
\end{lem}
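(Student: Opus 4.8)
The plan is to argue by contradiction using a standard viscosity-versus-distribution comparison, exploiting that the distributional inequality $\mathcal{L}_K w \le f$ says the operator applied to $w$ is a nonpositive-after-subtracting-$f$ Radon measure, and that at a touching point from below the value $\mathcal{L}_K w(x_0)$ is well-defined in $(-\infty,+\infty]$. Suppose $P$ is a quadratic polynomial touching $w$ from below at $x_0 \in B_1$, and suppose for contradiction that $\mathcal{L}_K w(x_0) > f(x_0)$; in particular $\mathcal{L}_K w(x_0)$ is finite and strictly larger than $f(x_0)$. The idea is to perturb $P$ downward so that it lies strictly below $w$ in a small ball $B_\rho(x_0)$ except we then lift it until it first touches $w$ on a set of positive measure — more precisely, replace $P$ by the function $\psi$ that equals a slightly lowered copy of $P$ inside $B_\rho(x_0)$ and equals $w$ outside, and use $\psi$ (or rather $\min(\psi, w)$ adjusted) to build an admissible perturbation.

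Here is the mechanism I would use. Fix a small radius $\rho$ so that $B_\rho(x_0) \Subset B_1$ and, by continuity of $f$ and the explicit averaged representation of $\mathcal{L}_K w(x_0)$, so that for the solid paraboloid $P_\delta(x) := P(x) + \delta(\rho^2 - |x-x_0|^2)$ with $\delta$ small we still have $\mathcal{L}_K v(x_0) > f(x_0)$ where $v := \max\{P_\delta, w\}$; note $v = w$ outside $B_\rho(x_0)$, $v \ge w$ everywhere, and $v(x_0) > w(x_0)$ since $P_\delta(x_0) = P(x_0) + \delta\rho^2 > w(x_0)$. Because $v \ge w$ with strict inequality on an open set, $w - v \le 0$ and is not identically zero; choosing a test function $\varphi \in C_0^\infty(B_\rho(x_0))$ with $\varphi \ge 0$ and $\varphi(x_0) > 0$, the distributional inequality gives $-\mathcal{E}_K(w,\varphi) = \langle \mathcal{L}_K w, \varphi\rangle \le \int f \varphi$. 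On the other hand, since $v$ is $C^{1,1}$ (indeed smooth) in a neighborhood of $x_0$ and touches $w$ from above there after the construction is symmetrized, one evaluates $\mathcal{L}_K v$ pointwise near $x_0$ and compares: the cleanest route is to test against $\varphi$ approximating a Dirac mass at $x_0$ and pass to the limit, using that $\mathcal{L}_K$ applied to $v$ is continuous near $x_0$ while $w \le v$ makes $\mathcal{L}_K(w - v)$ a nonpositive contribution at $x_0$, forcing $\mathcal{L}_K w(x_0) \le \mathcal{L}_K v(x_0)$; combined with $\mathcal{L}_K v(x_0) > f(x_0)$ this does not yet contradict anything, so instead one runs the argument the other way: the distributional bound integrated against $\varphi$ concentrating at $x_0$ yields $\mathcal{L}_K v(x_0) \le f(x_0)$ in the limit (here one uses that $v - w \ge 0$ so its contribution to $\mathcal{E}_K(w,\varphi) - \mathcal{E}_K(v,\varphi)$ has a favorable sign), contradicting $\mathcal{L}_K v(x_0) > f(x_0)$.

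More carefully, the key inequality is: for $\varphi \ge 0$ supported near $x_0$,
\[
\langle \mathcal{L}_K w, \varphi \rangle \;\ge\; \langle \mathcal{L}_K v, \varphi \rangle \;-\; \mathcal{E}_K(v - w, \varphi) \;+\; (\text{error}),
\]
and since $v - w \ge 0$ vanishes at $x_0$ and $\varphi$ concentrates there, the term $\mathcal{E}_K(v-w,\varphi)$ can be shown to have the right sign in the limit (this is where the symmetry $K(y)=K(-y)$ and the $C^{1,1}$ regularity of $v$ at $x_0$ enter). Taking $\varphi \to \delta_{x_0}$ normalized, the distributional hypothesis $\langle \mathcal{L}_K w, \varphi\rangle \le \int f\varphi \to f(x_0)$ then forces $\mathcal{L}_K v(x_0) \le f(x_0)$, contradicting the choice of $\delta$. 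I would organize the write-up as: (i) reduce to a smooth touching paraboloid and assume strict inequality at $x_0$; (ii) build the lifted competitor $v$ agreeing with $w$ outside a small ball; (iii) test the distributional inequality against a mollified bump at $x_0$ and use $v \ge w$ plus the pointwise regularity of $v$ to pass to the limit; (iv) derive the contradiction.

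The main obstacle will be step (iii): justifying the limit passage when testing against an approximation of the identity, i.e.\ controlling the nonlocal tail terms in $\mathcal{E}_K(v-w,\varphi)$ and ensuring that the "error" term above is genuinely negligible. This requires using that $v - w$ is bounded, vanishes at $x_0$, is supported (as a positive function) in $B_\rho(x_0)$, together with the decay of $K$, so that the cross interactions $\int\int (v-w)(x)\,\varphi(y)\,K(x-y)$ are controlled; the symmetry of $K$ and the smoothness of both $v$ and $\varphi$ near $x_0$ are what make the principal-value integral behave. A clean alternative that sidesteps some of this — which I would mention — is to instead lift $P$ until it touches $w$ at some point $x_1$ near $x_0$ where $w$ is then touched from below by a true paraboloid with $\mathcal{L}_K w(x_1)$ still $> f(x_1)$ by continuity, and to note that the distributional inequality applied on a tiny ball around $x_1$ where $w - (\text{paraboloid}) \le 0$ with equality only at $x_1$ immediately contradicts positivity of $\langle f - \mathcal{L}_K w, \varphi\rangle$; but handling the possibility that the contact set has measure zero is exactly the delicate point, resolved precisely by the mollification argument above.
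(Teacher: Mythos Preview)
Your proposal has a genuine gap in the sign analysis. You test the distributional inequality against a smooth bump $\varphi$ concentrating at $x_0$ and write
\[
\langle \mathcal L_K w,\varphi\rangle \;=\; \langle \mathcal L_K v,\varphi\rangle \;+\; \mathcal E_K(v-w,\varphi),
\]
hoping that $\mathcal E_K(v-w,\varphi)\ge 0$ so that the distributional bound on $w$ transfers to $v$. But for a generic nonnegative $\varphi$ this bilinear term has \emph{no} sign: $\mathcal E_K(g,\varphi)$ with $g,\varphi\ge 0$ can be of either sign. You also have an internal inconsistency --- first you say $v(x_0)>w(x_0)$, later that $v-w$ ``vanishes at $x_0$'' --- which reflects that the argument has not actually been carried out. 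The ``clean alternative'' you mention at the end is in fact the right idea, and the ``delicate point'' about the contact set having measure zero is exactly what needs to be handled, not sidestepped.

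The paper's proof fixes this by choosing the test function to be the gap itself: set $P_\eps:=P+\eps$, $w_\eps:=\max\{w,P_\eps\}$, and take $\varphi_\eps:=w_\eps-w=(P_\eps-w)^+\ge 0$. The distributional hypothesis gives $\mathcal E_K(\varphi_\eps,w)\ge 0$, and adding the automatically nonnegative $\mathcal E_K(\varphi_\eps,\varphi_\eps)$ yields $\mathcal E_K(\varphi_\eps,w_\eps)\ge 0$. This is the step that replaces your unavailable sign on $\mathcal E_K(v-w,\varphi)$: by testing against the gap rather than a mollifier, positive definiteness of $\mathcal E_K$ supplies the missing inequality. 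Since $w_\eps=P_\eps$ is $C^2$ on $\{\varphi_\eps>0\}$, one can integrate by parts to get $\int_{A_\eps}\varphi_\eps\,\mathcal L_K w_\eps\le 0$ with $A_\eps=\{w<P_\eps\}$, and then a direct lower bound on $\mathcal L_K w_\eps(x)$ for $x\in A_\eps$ (using $w_\eps\ge P_\eps$ near $x_0$ and $w_\eps=w$ far away) gives $\mathcal L_K w(x_0)\le f(x_0)$ after sending $\eps\to 0$ and the localization radius to zero. The moral: use $(P_\eps-w)^+$ as the test function, not a Dirac approximation.
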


\begin{proof} Assume for simplicity that $f=0$. Let $P$ be a quadratic polynomial touching $w$ strictly by below at say $0$. 
Let $P_\eps:= P+\eps$
 and denote by
 $$w_\eps:= \max\{w, P_\eps\}.$$
 and,
 $$ \varphi_\eps:= w_\eps - w \ge 0.$$

From the hypothesis $\mathcal E_K(\varphi_\eps, w) \geq 0$ thus
 $$\mathcal E_K(\varphi_\eps,w_\eps)=\mathcal E_K(\varphi_\eps, w) + \mathcal E_K(\varphi_\eps,\varphi_\eps)  \geq 0.$$

Since on the support of $\varphi_\eps$ we have that $w_\eps$ is $C^{1,1}$ by below, we can integrate by parts 
$\mathcal E_K(\varphi_\eps,w_\eps)$ and obtain
 \begin{equation}\label{small}\int_{A_\eps} \varphi_\eps(x) \mathcal L_K w_\eps(x)dx \leq 0,\end{equation}
 where $A_\eps := \{x : w < P_\eps\}$. Fix $\delta>0$, thus $A_\eps \subset B_\delta,$ for all $\eps$ small. We use that $w_\eps \ge P_\eps$ in $B_\delta$, $w_\eps=w$ outside $B_\delta$,  hence for $x \in A_\eps$, 
 \begin{align}\label{later}\mathcal L_K w_\eps (x) 
 &\geq \int_{B_\delta}(P_\eps(y)-P_\eps(x))K(y-x)dy + \int_{\R^n \setminus B_{\delta}}(w(y)-P_\eps(x))K(y-x)dy\\
 \nonumber & \geq \int_{\R^n \setminus B_{\delta}}(w(y)- w(0))K(y)dy + o_\eps(1) + O(\delta^{2-2s}), \quad \text{as $\eps \to 0$,} 
 \end{align} with $o_\eps(1) \to 0$ as $\eps \to 0.$
Combining this estimate with \eqref{small}, and using that $\varphi_\eps \geq 0,$ we obtain that 
$$\mathcal L_K w(0) \leq 0,$$
after letting $\eps$ and then $ \delta$ go to zero.
 \end{proof}

By Lemma \ref{visc}, if $(u_1,u_2)$ is a minimizing pair and $f_i$ are continuous functions then (see \eqref{EL1}-\eqref{EL2})
\begin{equation}\label{e1}
\mathcal L_{K_1} u_1 \le f_1, \quad  \mathcal L_{K_2} u_2 \ge f_2, \quad \mbox{in $B_1$},
\end{equation}
$$\mathcal L_{K_1} u_1 = f_1, \quad  \mathcal L_{K_2} u_2 = f_2, \quad \mbox{in the open set $\{u_1>u_2\}$}.$$
in the viscosity sense. Next we prove a similar statement in the closed set 
\begin{equation}\label{Es}
E:=\{u_1=u_2\}.
\end{equation}

\begin{lem}\label{vi}
Assume that $u_2$ is touched by below at a point $x_0 \in \{u_1=u_2\} \cap B_1$ by a $C^2$ function. Then
$$\mathcal L_{K_1}u_1(x_0) + \mathcal L_{K_2}u_2(x_0) \leq f_1(x_0)+f_2(x_0).$$ 
\end{lem}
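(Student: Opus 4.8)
The plan is to exploit the two facts we already have: the distributional equation $\mathcal L_{K_1}u_1+\mathcal L_{K_2}u_2=f_1+f_2$ in all of $B_1$ (equation \eqref{EL2}, which passed to the translation invariant setting), and the fact that at $x_0\in\{u_1=u_2\}$ a $C^2$ function $\phi$ touches $u_2$ from below. Since $u_1\ge u_2$, the same $\phi$ touches $u_1$ from below at $x_0$ as well. Hence, as noted in the remark following the definition of viscosity solution, both $\mathcal L_{K_1}u_1(x_0)$ and $\mathcal L_{K_2}u_2(x_0)$ are well-defined elements of $(-\infty,+\infty]$ (their negative parts are integrable because near $x_0$ the integrand is bounded below by $-Cr^2$, using that $\phi\le u_i$ touches at $x_0$ and $K_i$ is symmetric). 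So the sum $\mathcal L_{K_1}u_1(x_0)+\mathcal L_{K_2}u_2(x_0)$ makes sense in $(-\infty,+\infty]$, and what must be proved is that it is $\le f_1(x_0)+f_2(x_0)$; in particular it is finite.

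The argument I would run mimics the proof of Lemma \ref{visc}. Assume WLOG $x_0=0$, and subtract a quadratic so that we may take $\phi=P_\eps:=P+\eps$ with $P$ a quadratic touching $u_2$ (hence $u_1$) strictly from below at $0$, and we study the limit $\eps\to0$. Replace $u_2$ by $w_\eps:=\max\{u_2,P_\eps\}$ and set $\varphi_\eps:=w_\eps-u_2\ge0$, supported in a small ball $A_\eps\subset B_\delta$. The point is that $\varphi_\eps\in H^{s_2}(B_1)$ vanishes outside $B_1$, so it is an admissible test function for both energies, and from the Euler–Lagrange identity \eqref{EL2} we get $-\mathcal E_{K_1}(u_1,\varphi_\eps)-\mathcal E_{K_2}(u_2,\varphi_\eps)=\int(f_1+f_2)\varphi_\eps$. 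Writing $\mathcal E_{K_2}(u_2,\varphi_\eps)=\mathcal E_{K_2}(w_\eps,\varphi_\eps)-\mathcal E_{K_2}(\varphi_\eps,\varphi_\eps)\le\mathcal E_{K_2}(w_\eps,\varphi_\eps)$ and integrating by parts on the support of $\varphi_\eps$ (where $w_\eps=P_\eps$ is smooth), exactly as in \eqref{small}–\eqref{later} one gets
$$\int_{A_\eps}\varphi_\eps(x)\,\big(\mathcal L_{K_1}u_1(x)+\mathcal L_{K_2}w_\eps(x)\big)\,dx\le\int_{A_\eps}(f_1+f_2)\varphi_\eps+\mathcal E_{K_2}(\varphi_\eps,\varphi_\eps).$$
Here $\mathcal L_{K_1}u_1(x)$ needs a word of care since on $A_\eps$ we only know $u_1\ge u_2\ge P_\eps$; but $u_1$ is continuous and touched below by $P$ at $0$, so $\mathcal L_{K_1}u_1$ is a well-defined function on $A_\eps$ with a uniform lower bound near $0$, and the relevant quantity is controlled. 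The standard estimate \eqref{later} gives, for $x\in A_\eps$,
$$\mathcal L_{K_2}w_\eps(x)\ge\int_{\R^n\setminus B_\delta}(u_2(y)-u_2(0))K_2(y)\,dy+o_\eps(1)+O(\delta^{2-2s_2}),$$
and similarly, using $u_1\ge w_\eps$ and $u_1=u_2$ off $B_\delta$ eventually (here $u_1(0)=u_2(0)$),
$$\mathcal L_{K_1}u_1(x)\ge\int_{\R^n\setminus B_\delta}(u_1(y)-u_1(0))K_1(y)\,dy+o_\eps(1)+O(\delta^{2-2s_1}).$$
Dividing by $\int\varphi_\eps>0$, noting $\int(f_1+f_2)\varphi_\eps/\int\varphi_\eps\to f_1(0)+f_2(0)$ by continuity, and that $\mathcal E_{K_2}(\varphi_\eps,\varphi_\eps)/\int\varphi_\eps\to0$ (as in Lemma \ref{visc}), then letting $\eps\to0$ and afterward $\delta\to0$, we obtain $\mathcal L_{K_1}u_1(0)+\mathcal L_{K_2}u_2(0)\le f_1(0)+f_2(0)$, where by monotone convergence the truncated integrals over $\R^n\setminus B_\delta$ tend to the full principal-value integrals $\mathcal L_{K_i}u_i(0)$.

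The main obstacle I anticipate is the bookkeeping around $\mathcal L_{K_1}u_1$ on the set $A_\eps$: unlike $w_\eps$, the function $u_1$ is not modified and one does not have a clean smooth replacement there, so one must argue that $u_1$ being touched from below at $0$ by the fixed quadratic $P$ (together with continuity) suffices to make $\mathcal L_{K_1}u_1(x)$ a legitimate, lower-semicontinuously-controlled quantity for $x$ ranging over the shrinking set $A_\eps$, and that its contribution in the integral identity converges correctly to $\mathcal L_{K_1}u_1(0)$. The handling of the tail-integral error terms $O(\delta^{2-2s_i})$ is routine — one just needs $s_i<1$, or in the local case $s_i=1$ the error term is replaced by the exact contribution of the constant-coefficient second-order operator, which is finite — and the $\eps\to0$ limits (convergence of $w_\eps\to u_2$, of the energy of $\varphi_\eps$ to zero, of averaged right-hand sides) are verbatim as in Lemma \ref{visc}.
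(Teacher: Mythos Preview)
There is a genuine gap, and you have correctly identified it yourself as ``the main obstacle'': the quantity $\mathcal L_{K_1}u_1(x)$ is \emph{not} a well-defined pointwise function on $A_\eps$. Being touched by $P$ from below at the single point $0$ gives you $\mathcal L_{K_1}u_1(0)\in(-\infty,+\infty]$, but at a generic $x\in A_\eps\setminus\{0\}$ the continuous function $u_1$ need not be touched from below by any $C^2$ function, so the principal value integral defining $\mathcal L_{K_1}u_1(x)$ may fail to exist. Consequently your displayed inequality does not make sense as a pointwise integral, and if one reads $\langle \mathcal L_{K_1}u_1,\varphi_\eps\rangle$ as a measure pairing (recall $\mathcal L_{K_1}u_1=f_1-\mu_1$ with $\mu_1\ge 0$ a Radon measure) there is no lower bound available: $\mu_1$ could concentrate mass near $0$ and make $\langle \mathcal L_{K_1}u_1,\varphi_\eps\rangle$ very negative relative to $\int\varphi_\eps$. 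The claimed lower bound ``using $u_1\ge w_\eps$'' is also incorrect: on $A_\eps$ one has $u_2<P_\eps=w_\eps$, and $u_1\ge u_2$ does not force $u_1\ge P_\eps$ there.

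The paper's proof avoids this by a small but essential change: it modifies \emph{both} functions, setting $u_i^\eps:=\max\{u_i,P_\eps\}$ and $\varphi_i^\eps:=u_i^\eps-u_i$, and uses the \emph{minimality} of the pair (rather than the Euler--Lagrange identity \eqref{EL2}) to get $\sum_i\mathcal E_{K_i}(\varphi_i^\eps,u_i^\eps)\ge 0$. This permits two different test functions, one for each energy. Since each $u_i^\eps$ equals $P_\eps$ on $A_i^\eps$, the integrations by parts $\mathcal E_{K_i}(\varphi_i^\eps,u_i^\eps)=-\int_{A_i^\eps}\varphi_i^\eps\,\mathcal L_{K_i}u_i^\eps$ are legitimate for \emph{both} indices. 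After the estimate \eqref{later} one obtains
\[
\sum_i\Big(\int_{A_i^\eps}\varphi_i^\eps\Big)\big(\mathcal L_{K_i}u_i(0)+o(1)\big)\le 0,
\]
and then the paper uses the already established inequality $\mathcal L_{K_1}u_1(0)\le f_1(0)$ together with $0\le\varphi_1^\eps\le\varphi_2^\eps$ (which holds since $u_1\ge u_2$) to divide by $\int\varphi_2^\eps$ and conclude. The trick of allowing different $\varphi_i^\eps$ is precisely what makes the $K_1$ term tractable; your single test function $\varphi_\eps=\varphi_2^\eps$ paired against $u_1$ cannot achieve this.
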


We remark that, since $u_1 \ge u_2$, $u_1$ is touched by below at $x_0$ by the same $C^2$ function, thus $\mathcal L_{K_1}u_1(x_0)$ is well defined.

\begin{proof} We argue as above. Assume for simplicity that $f_1=0$, $f_2=0$, $x_0=0$, and let $P$ be a quadratic polynomial touching $u_2$ strictly by below at $0$. Let $P_\eps:= P+\eps$
 and denote by
 $$u_i^\eps:= \max\{u_i, P_\eps\}, \quad \varphi_i^\eps:= u_i^\eps - u_i.$$
 
 By minimality,
 $$\sum_i \left( \mathcal E_{K_i}(u_i^\eps, u_i^\eps) - \mathcal E_{K_i}(u_i, u_i) \right) \ge 0,$$
thus
$$\sum_i \mathcal E_{K_i}(\varphi_i^\eps, u_i^\eps) \ge \frac 12 \sum_i   \mathcal E_{K_i}(\varphi^\eps_i, \varphi^\eps_i) \ge 0$$
 
 After integrating by parts the terms $\mathcal E_{K}(\varphi^\eps, u^\eps)$ we get,
$$\sum_i  \int_{A^\eps_i} \varphi_i^\eps \, \,( \mathcal L_{K_i} u_i^\eps) \, dx \leq 0,$$
 where $A^\eps_i:=\{u_i < P_\eps\}$.
 Arguing as \eqref{later} in Lemma \ref{visc} we obtain that
 $$\sum_i \left(\int_{A_i^\eps} \varphi_i^\eps \right ) \left( \mathcal L_{K_i} u_i(0) + o_\delta(1)+o_\eps(1)\right) \leq 0,$$
 with $$o_\delta(1) \to 0 \quad \text{as $\delta \to 0$,} \quad o_\eps(1) \to 0 \quad \text{as $\eps \to 0$}.$$
 Since we already know that $\mathcal L_{K_1}  u_1(0) \leq 0$ and also $0<\varphi_1^\eps \leq \varphi_2^\eps$, we get the desired inequality after dividing by $\int \varphi_2^\eps$ and then letting $\eps \to 0$, $\delta \to 0$.
 \end{proof}

\subsection{Viscosity formulation of the two membranes problem} 

Next we show that we can formulate the two membranes problem in a non-variational setting. With this approach we may consider the two membranes problem for nonlinear operators $Iu$ or $F(D^2u)$ (instead of $\mathcal L_K$) which do not have necessarily a variational structure. 

Below we show that the following conditions in $B_1$ 
\begin{equation}\label{EL3} 
u_1 \ge u_2, \quad \quad \mathcal L_{K_1} u_1 \le f_1, \quad \quad \mathcal L_{K_2} u_2 \ge f_2 
\end{equation}
\begin{equation}\label{EL4}
\mathcal L_{K_i}u_i=f_i \quad \mbox{in $\{u_2<u_1\}$}, \quad \quad \mathcal L_{K_1} u_1+ \mathcal L_{K_2} u_2= f_1+f_2,  
\end{equation}
determine the pair $(u_1,u_2)$ uniquely. 

We always assume that outside $B_1$, $u_i=u_i^0$ are prescribed with 
$u_i^0 \in L^1(\R^n,d\omega)$ and continuous near $\p B_1$, and $u^0_1\ge u^0_2$ near $\p B_1$. Also we assume that $f_i$'s are continuous and bounded in $B_1$. 

\begin{defn}\label{dvis}
We say that $(w_1,w_2)$ is a viscosity subsolution to \eqref{EL3}-\eqref{EL4} if $w_i$  are continuous in a neighborhood of $ \overline{ B}_1$,  and in $B_1$ we have $w_2 \le w_1$, and  
 \begin{equation}\label{EL5} 
 \mathcal L_{K_2} w_2 \ge f_2,
\end{equation}
\begin{equation}\label{EL6}
\mathcal L_{K_1} w_1+ \chi_E \mathcal L_{K_2} w_2 \ge f_1+ \chi_E f_2   \quad \mbox{with} \quad E:=\{w_1=w_2\}.
\end{equation}
\end{defn}

\

Similarly, we define the notion of viscosity supersolution for the two membranes problem. Equation \eqref{EL6} is understood as a differential inequality for $w_1$ which depends on $w_2$. Notice that at a point $x_0 \in E$ where $w_1$ has a tangent $C^2$ function $\phi$ by above, the same function is tangent also to $w_2$ at $x_0$, and therefore \eqref{EL6} provides an integro-differential inequality involving $\phi$ at $x_0$. Precisely we require that when we replace $w_i$ by $\phi$ in any $\delta$ neighborhood of $x_0$ the inequality \eqref{EL6} is satisfied at $x_0$.
 
In the next lemma we show that even though the inequality \eqref{EL6} contains the discontinuous term $\chi_E$,  the notion of subsolution is preserved under uniform limits.
 
 \begin{lem}Assume that $(w_1^k,w_2^k)$ is a sequence of subsolutions with right hand sides 
 $(f_1^k,f_2^k)$. Assume that $w_i^k$, $f_i^k$ converge uniformly on compact sets of $B_1$ to 
 $\bar w_i$, $\bar f_i$ and that $w_i^k \to \bar w_i$ weakly in $L^1(\R^n,d \omega)$. 
 Then $(\bar w_1,\bar w_2)$ is a subsolution.
 \end{lem}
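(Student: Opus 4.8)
The plan is to pass to the limit in the viscosity formulation, treating the subsolution property \eqref{EL5} for $w_2$ and the coupled inequality \eqref{EL6} for the pair separately, since the former is the standard stability statement for viscosity supersolutions/subsolutions of integro-differential equations and the latter requires care because of the characteristic function $\chi_E$. First I would establish $\bar w_2 \le \bar w_1$ in $B_1$: this is immediate from uniform convergence on compact sets since $w_2^k \le w_1^k$. Next, the inequality $\mathcal L_{K_2} \bar w_2 \ge \bar f_2$ in the viscosity sense follows by the usual argument: if a quadratic polynomial $P$ touches $\bar w_2$ strictly by below at $x_0 \in B_1$, then by uniform convergence on a small ball and the control of $w_2^k \to \bar w_2$ weakly in $L^1(\R^n, d\omega)$, one finds points $x_k \to x_0$ and constants $c_k \to 0$ such that $P + c_k$ touches $w_2^k$ by below at $x_k$; applying the subsolution property of $w_2^k$ and passing to the limit (the tail integral over $\R^n \setminus B_\delta(x_0)$ converges by the weak $L^1(d\omega)$ convergence, and the local part is controlled by $P$) yields $\mathcal L_{K_2}\bar w_2(x_0) \ge \bar f_2(x_0)$.

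The main work is the coupled inequality \eqref{EL6}. Suppose a $C^2$ function $\phi$ touches $\bar w_1$ strictly by above at $x_0 \in B_1$. There are two cases. If $x_0 \notin \bar E = \{\bar w_1 = \bar w_2\}$, i.e.\ $\bar w_1(x_0) > \bar w_2(x_0)$, then $\chi_E$ does not appear in the limiting inequality and we only need $\mathcal L_{K_1}\bar w_1(x_0) \ge \bar f_1(x_0)$; but near $x_0$ we have $w_1^k > w_2^k$ for $k$ large (by uniform convergence), so in a neighborhood of $x_0$ the subsolution $w_1^k$ satisfies $\mathcal L_{K_1} w_1^k + \chi_{E_k} \mathcal L_{K_2} w_2^k \ge f_1^k + \chi_{E_k} f_2^k$ with $E_k \cap (\text{neighborhood}) = \emptyset$, hence $\mathcal L_{K_1} w_1^k \ge f_1^k$ there, and the standard stability argument as above gives the conclusion. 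If $x_0 \in \bar E$, then $\phi$ also touches $\bar w_2$ by above at $x_0$ (since $\bar w_2 \le \bar w_1 \le \phi$ near $x_0$ with equality at $x_0$), and we must produce, for each fixed $\delta > 0$, the inequality obtained by replacing both $w_i$ by $\phi$ in $B_\delta(x_0)$ in \eqref{EL6}.

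For the latter, the idea is: by uniform convergence we can find $x_k \to x_0$ and small constants $a_k, b_k \to 0$ so that the shifted polynomials $\phi + a_k$ and $\phi + b_k$ touch $w_1^k$ and $w_2^k$ respectively by above at nearby points; more robustly, one uses a sup-convolution/inf-convolution type perturbation or simply evaluates the subsolution inequality for $w_1^k$ at a point $x_k \in B_\delta(x_0)$ realizing $\max_{B_\delta}(w_1^k - \phi)$, which tends to $0$. At $x_k$, since the test function $\phi$ (shifted by the max value) touches $w_1^k$ by above, the subsolution inequality for $(w_1^k, w_2^k)$ gives an integro-differential inequality involving $\phi$ replacing $w_1^k$ in a small ball; the term $\chi_{E_k}(x_k) \mathcal L_{K_2} w_2^k(x_k)$ is handled by noting that either $x_k \notin E_k$ (and that term drops, which only helps since we may also need the $w_2$ contribution — here one uses that if $x_k \notin E_k$ then $w_1^k(x_k) > w_2^k(x_k)$ and one can instead combine with the separate inequality $\mathcal L_{K_2}w_2^k \ge f_2^k$ when $x_0 \in \bar E$ forces $x_k$ close to $E_k$), or $x_k \in E_k$ in which case $\phi$ touches $w_2^k$ by above at $x_k$ as well and one replaces $w_2^k$ by $\phi$ in $B_\delta(x_k)$, getting the full inequality \eqref{EL6} with $\phi$ in place of both $w_i$ on $B_\delta(x_k)$. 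Letting $k \to \infty$ with $\delta$ fixed — using uniform convergence of $f_i^k$ and weak $L^1(d\omega)$ convergence of $w_i^k$ to control the tails, and continuity of $\phi$ for the local parts — yields the required inequality for $(\bar w_1, \bar w_2)$ at $x_0$ in the $\delta$-neighborhood sense; this is exactly what Definition \ref{dvis} demands.

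The step I expect to be the main obstacle is precisely the bookkeeping of the characteristic function $\chi_{E_k}$ at the approximating points $x_k$ when $x_0 \in \bar E$: a priori $x_k$ may oscillate in and out of $E_k$, and when $x_k \notin E_k$ the inequality for $w_1^k$ alone is \emph{weaker} than what we want in the limit (it lacks the $\mathcal L_{K_2}$ term). The resolution is that in this situation $w_1^k(x_k) > w_2^k(x_k)$, so $w_2^k$ has no obstacle constraint from above at $x_k$ and the function $w_2^k$ is itself a subsolution of $\mathcal L_{K_2} w_2^k \ge f_2^k$ near $x_k$ (this we already have, as \eqref{EL5} holds everywhere); since $\phi$ touches $\bar w_2 = \bar w_1$ by above at $x_0$, a comparison/perturbation argument lets us add back a nonnegative contribution $\mathcal L_{K_2} w_2^k(x_k) \ge f_2^k(x_k) + o(1)$, recovering \eqref{EL6} in the limit. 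Making this last comparison rigorous — showing one may always pass to a further subsequence along which one of the two clean scenarios holds, or bound the discrepancy — is the technical heart; everything else is the routine integro-differential stability machinery already used in Lemma \ref{visc}.
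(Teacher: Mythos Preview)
Your approach is essentially the paper's: the same case split on whether $x_0 \in \bar E$, and in the case $x_0 \in \bar E$ the same dichotomy on whether the approximating contact points $x_k$ lie in $E_k$ for infinitely many $k$. The one place where the paper is cleaner than your sketch is the subcase $x_k \notin E_k$: you try to recover $\mathcal L_{K_2} w_2^k(x_k) \ge f_2^k(x_k) + o(1)$ at the approximating level, but no test function touches $w_2^k$ from above at $x_k$ (since $w_2^k(x_k) < w_1^k(x_k)$), so this is awkward. The paper instead passes to the limit in the $w_1$-inequality alone along that subsequence to get $\mathcal L_{K_1}\bar w_1(x_0) \ge \bar f_1(x_0)$, and then uses that $\phi$ touches $\bar w_2$ by above at $x_0$ (because $x_0 \in \bar E$) together with the already-established inequality \eqref{EL5} for $\bar w_2$ to obtain $\mathcal L_{K_2}\bar w_2(x_0) \ge \bar f_2(x_0)$; summing gives the desired inequality with no further work.
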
 

\begin{proof}
Clearly $\bar w_2$ satisfies \eqref{EL5}.
Assume that $\phi \in C^2$ touches strictly by above $\bar w _1$ at some point $\bar x$. Denote $\bar E := \{\bar w_1= \bar w_2\}.$

If $\bar x \notin \bar E$ then we obtain as usual
\begin{equation} \label{30}
\mathcal L_{K_1} \bar w_1 (\bar x) \ge \bar f_1(\bar x),
\end{equation}
and we are done.

If $\bar x \in \bar E$ we need to show that 
$$ \sum_i \mathcal L_{K_i} \bar w_i (\bar x) \ge \sum_i f_i(\bar x).$$
We slide the graph of $\phi$ by above till it touches $w_1^k$ at $x_k$, and then $x_k \to \bar x$. We distinguish two cases: either $x_k \in E_k$ or $x_k \notin E_k$ for infinitely many $k$'s. 
In the first case we obtain the inequality above by writing it for the $w_i^k$ at $x_k$ and letting $k \to \infty$. In the second case we obtain \eqref{30} which combined with \eqref{EL5} for $\bar w_2$ gives the desired inequality again. 
\end{proof}
\begin{lem}
Assume that $(w_1^k,w_2^k)$, $k=1,2$ are two pairs of subsolutions,
and let $\bar w_i=\max_k w_i^k$, $\bar f_i=\min_k f^k_i$. 
Then $(\bar w_1, \bar w_2)$ is a subsolution.
\end{lem}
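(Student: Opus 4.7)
The plan is to verify the three ingredients of Definition~\ref{dvis} for the candidate $(\bar w_1, \bar w_2)$ with data $(\bar f_1, \bar f_2)$. The ordering $\bar w_2 \le \bar w_1$ is immediate from $w_2^k \le w_1^k \le \bar w_1$ for each $k$, and the inequality $\mathcal L_{K_2}\bar w_2 \ge \bar f_2$ is the usual ``max of viscosity subsolutions'' statement for a single nonlocal operator: if a quadratic $\phi$ touches $\bar w_2$ strictly from above at $\bar x$, some index $k$ achieves $w_2^k(\bar x) = \bar w_2(\bar x)$, and strictness of the touching is inherited by $w_2^k$ since $w_2^k \le \bar w_2 < \phi$ on a punctured neighborhood. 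The elementary monotonicity $\mathcal L_{K_2} u(\bar x) \ge \mathcal L_{K_2} v(\bar x)$ whenever $u \ge v$ pointwise with equality at $\bar x$ then gives
$$\mathcal L_{K_2}\bar w_2(\bar x) \ge \mathcal L_{K_2} w_2^k(\bar x) \ge f_2^k(\bar x) \ge \bar f_2(\bar x).$$

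The substantive step is the coupled inequality \eqref{EL6} for $\bar w_1$. Let $\phi$ touch $\bar w_1$ strictly from above at $\bar x \in B_1$, and write $\bar E := \{\bar w_1 = \bar w_2\}$ and $E_k := \{w_1^k = w_2^k\}$. If $\bar x \notin \bar E$, I would pick any $k$ realising $w_1^k(\bar x) = \bar w_1(\bar x)$; then $w_1^k(\bar x) = \bar w_1(\bar x) > \bar w_2(\bar x) \ge w_2^k(\bar x)$, so $\bar x \notin E_k$, and \eqref{EL6} for the pair $k$ degenerates to $\mathcal L_{K_1} w_1^k(\bar x) \ge f_1^k(\bar x)$, which transfers to $\bar w_1$ via the same monotonicity. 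In the complementary case $\bar x \in \bar E$ the key observation is that some single index $k\in\{1,2\}$ simultaneously achieves both maxima at $\bar x$ and also satisfies $\bar x \in E_k$. After relabelling suppose $w_1^1(\bar x) \ge w_1^2(\bar x)$, so $\bar w_1(\bar x) = w_1^1(\bar x)$. If $\bar w_2(\bar x) = w_2^1(\bar x)$, take $k=1$, and then $\bar x \in E_1$. Otherwise $\bar w_2(\bar x) = w_2^2(\bar x)$, and the chain
$$w_2^2(\bar x) \le w_1^2(\bar x) \le w_1^1(\bar x) = \bar w_1(\bar x) = \bar w_2(\bar x) = w_2^2(\bar x)$$
collapses, forcing $w_1^2(\bar x) = \bar w_1(\bar x)$ and $\bar x \in E_2$, so $k=2$ works. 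For such a $k$, $\phi$ touches both $w_1^k$ and $w_2^k$ strictly from above at $\bar x$, and since $\bar x \in E_k$ the pair-$k$ subsolution condition provides
$$\mathcal L_{K_1} w_1^k(\bar x) + \mathcal L_{K_2} w_2^k(\bar x) \ge f_1^k(\bar x) + f_2^k(\bar x),$$
and applying the monotonicity term by term yields \eqref{EL6} for $(\bar w_1, \bar w_2)$ at $\bar x$.

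I expect the main obstacle to be precisely this combinatorial selection of a single index $k$ when $\bar x \in \bar E$: because $\chi_E$ enters \eqref{EL6} in a discontinuous, non-linear way, there is no route through simply summing the two pairs' inequalities, and one must exhibit a single pair whose coincidence set contains $\bar x$. The constraint $w_2^k \le w_1^k$, together with the maximality of the $\bar w_i(\bar x)$, is exactly what makes the short case split above succeed.
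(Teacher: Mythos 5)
Your proof is correct and takes essentially the same route as the paper, whose entire argument is the one-line observation that $\bar E \subset E_1 \cup E_2$ followed by ``the rest is straightforward to check.'' You have supplied the unstated but necessary refinement of that observation: for $\bar x \in \bar E$ one can pick a single index $k$ (namely any $k$ with $w_2^k(\bar x)=\bar w_2(\bar x)$) that simultaneously achieves both maxima at $\bar x$ and satisfies $\bar x \in E_k$, which is exactly what lets the touching function be transferred to the pair $(w_1^k,w_2^k)$ and the monotonicity of $\mathcal L_{K_i}$ be applied term by term.
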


\begin{proof}
Notice that $\bar E:=\{\bar w_1=\bar w_2\} \subset E_1 \cup E_2,$ $E_k:= \{w_1^k=w_2^k\}, k=1,2,$ and then the rest of the proof it is straightforward to check.
\end{proof}

In view of the lemma above we can use the standard method of sup-convolutions (see \cite{CC}, \cite{CS1}) 
and approximate a subsolution $(w_1,w_2)$ with right hand side $(f_1,f_2)$ by a sequence of 
semiconvex subsolutions $(w_1^\eps,w_2^\eps)$ and right hand side $(f_1^\eps,f_2^\eps)$. 

Precisely, $(w_1^\eps, w_2^\eps)$ satisfies:

a) has the same boundary data outside $B_1$ as the original pair, 

b) is a subsolution in $B_{1-\eps}$ and each $w_i^\eps$ is uniformly $C^{1,1}$ by 
below. 

c) $w_i^\eps \to w_i$, $f_i^\eps \to f_i$ uniformly in $\bar B_1$ as $\eps \to 0$.

\

Next we prove the following comparison principle.

\begin{lem}[Maximum principle]
Assume that $(w_1,w_2)$ is a subsolution and $(v_1,v_2)$ is a supersolution to 
\eqref{EL3}-\eqref{EL4} and $w_i \le v_i$ outside $B_1$. Then $w_i \le v_i$ also in $B_1$.
\end{lem}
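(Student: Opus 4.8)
The plan is to prove the comparison principle by the standard doubling-of-variables / sup- and inf-convolution technique adapted to the two-membranes structure. First I would reduce to the case where the pair $(w_1,w_2)$ is semiconvex (uniformly $C^{1,1}$ from below) and the pair $(v_1,v_2)$ is semiconcave (uniformly $C^{1,1}$ from above), using the approximation scheme (a)--(c) recorded just above the statement: replace $(w_1,w_2)$ by $(w_1^\eps,w_2^\eps)$, which is a subsolution in $B_{1-\eps}$ with the same exterior data, and symmetrically replace $(v_1,v_2)$ by $(v_1^\eps,v_2^\eps)$; then send $\eps\to 0$ at the end. For these regularized pairs $\mathcal L_{K_i}w_i^\eps$ and $\mathcal L_{K_i}v_i^\eps$ are classically defined pointwise, so the differential inequalities hold in the a.e.\ sense and the argument becomes a pointwise computation at an interior maximum.

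Next I would argue by contradiction: suppose $\sup_i \sup_{B_1}(w_i - v_i) =: m > 0$. Since $w_i \le v_i$ outside $B_1$ and the exterior data are ordered, the positive maximum is attained at an interior point. The crucial structural point is that $w_1 - v_1$ and $w_2 - v_2$ attain their maxima in a coordinated way because of the constraints $w_2\le w_1$ and $v_2\le v_1$: I would show that if the maximum of $w_1-v_1$ is attained at $x_0$ with value $m$, then at $x_0$ either we are in the ``free'' region where $w_1 > w_2$ (resp.\ $v_1 > v_2$) and one uses the single-equation inequality $\mathcal L_{K_1}w_1\le f_1 \le \mathcal L_{K_1}v_1$, or we are forced onto the contact set of one of the pairs, where one must invoke the coupled inequality \eqref{EL6}. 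The case analysis is the heart of it: if $x_0\in\{w_1=w_2\}$ then $w_2-v_2$ also has a large value at $x_0$ (since $w_2(x_0)=w_1(x_0)$ and $v_2(x_0)\le v_1(x_0)$, so $w_2(x_0)-v_2(x_0)\ge w_1(x_0)-v_1(x_0)=m$), and one can transfer the comparison to the second membrane; symmetrically if $x_0\in\{v_1=v_2\}$. One then runs the doubling argument on the appropriate pair of indices, using at the touching point the fact that $\mathcal L_{K}$ applied to the difference $w-v$ is $\ge 0$ at an interior max (by the sign of $a_{w-v}(r)$ for small $r$ and nonnegativity/nonpositivity of the remaining integral against the tails, exactly as in Lemma~\ref{visc} and Lemma~\ref{vi}) to contradict the strict inequality between the right-hand sides; if the right-hand sides are only ordered non-strictly one first perturbs $w_1$ (and $w_2$) by $-\eta(|x|^2-1)$ or a small negative constant to create strictness, as is standard.

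Concretely, I would organize the contradiction as follows. Let $x_0$ be a point where $\max_i(w_i-v_i)$ is attained, with value $m>0$, and let $i_0$ be an index achieving it; then $w_{i_0}$ is touched from below by the $C^{1,1}$ function $v_{i_0}+m$ at $x_0$, while $v_{i_0}$ is touched from above by $w_{i_0}-m$. If $i_0=2$: from $\mathcal L_{K_2}w_2\ge f_2$ and $\mathcal L_{K_2}v_2\le f_2$ (the latter from the supersolution inequality, valid whether or not $x_0$ is a contact point of $(v_1,v_2)$, since $v_2$ satisfies $\mathcal L_{K_2}v_2 \le f_2$ everywhere) we get $\mathcal L_{K_2}(w_2-v_2)(x_0)\ge 0$; but the tail integral of $w_2-v_2$ is $\le 0$ because $x_0$ is a global max of $w_2-v_2$ (after the preliminary perturbation it is a strict interior max and the exterior data force $w_2-v_2<m$ there), giving the strict reversed inequality — contradiction. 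If $i_0=1$: if $x_0\notin\{w_1=w_2\}$ use $\mathcal L_{K_1}w_1(x_0)=f_1(x_0)$ together with $\mathcal L_{K_1}v_1(x_0)\le f_1(x_0)$ in the same way; if $x_0\in\{w_1=w_2\}$, combine $\sum_i\mathcal L_{K_i}w_i(x_0)\ge \sum_i f_i(x_0)$ with $\mathcal L_{K_1}v_1(x_0)\le f_1(x_0)$ and $\mathcal L_{K_2}v_2(x_0)\le f_2(x_0)$, and use that $w_2-v_2$ also has a maximum of value $\ge m$ at $x_0$ (possibly $=m$), so its $\mathcal L_{K_2}$ at $x_0$ is $\le$ (tail term) $\le 0$ — again contradiction.

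The main obstacle I anticipate is the coupled contact-set case: making rigorous the claim that at a maximum point $x_0$ of $w_1-v_1$ lying in $\{w_1=w_2\}$ the quantity $w_2-v_2$ has a maximum there (so that one may legitimately apply the sign-of-$\mathcal L_{K_2}$ argument to it), and correctly matching up which of \eqref{EL5}/\eqref{EL6} for the subsolution pairs with which inequality for the supersolution pair, so that every term has the right sign. The discontinuity of $\chi_E$ in \eqref{EL6} does not actually cause trouble here because we only ever evaluate the coupled inequality at a point we have already verified lies in the contact set, where $\chi_E=1$; the semiconvexity/semiconcavity reduction is what lets us avoid touching-polynomial bookkeeping and work with pointwise values of the operators, so it should be kept front and center. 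After deriving the contradiction for the regularized pairs, passing to the limit $\eps\to 0$ and removing the perturbation $\eta\to 0$ is routine and yields $w_i\le v_i$ in $B_1$.
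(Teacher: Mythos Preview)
Your overall strategy is the same as the paper's: regularize $(w_1,w_2)$ and $(v_1,v_2)$ by sup/inf convolutions so that the operators are classically defined, translate the pair $(w_1,w_2)$ by a common constant until a first interior contact occurs, and derive a contradiction at the contact point. The paper does exactly this, so there is no genuinely different route here.

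There is, however, a real gap in your case analysis, stemming from a misreading of the supersolution condition. You assert that a supersolution satisfies $\mathcal L_{K_2} v_2\le f_2$ \emph{everywhere}. It does not: the supersolution inequalities are obtained from Definition~\ref{dvis} by reversing the inequalities \emph{and} swapping the roles of the indices (this is the symmetry of the problem under $(u_1,u_2,f_1,f_2)\mapsto(-u_2,-u_1,-f_2,-f_1)$). Thus a supersolution gives $\mathcal L_{K_1} v_1\le f_1$ everywhere, while for $v_2$ one only has $\mathcal L_{K_2} v_2+\chi_{E_v}\mathcal L_{K_1} v_1\le f_2+\chi_{E_v} f_1$; in particular $\mathcal L_{K_2} v_2\le f_2$ is available only when $x_0\notin E_v$. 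So your $i_0=2$ case is incomplete: you must still treat the sub-case $x_0\in E_v$. Likewise, in your $i_0=1$, $x_0\in E_w$ case you invoke $\mathcal L_{K_2} v_2(x_0)\le f_2(x_0)$, which is not directly available there.

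The fix is the paper's key structural observation, which you almost stumbled onto but did not use: if the first contact occurs at $x_0\in E_v$ then necessarily $x_0\in E_w$, and conversely. Indeed, after translating so that $w_i\le v_i$ with equality at $x_0$ for one index, if $x_0\in E_v$ then $w_1(x_0)\le v_1(x_0)=v_2(x_0)=w_2(x_0)\le w_1(x_0)$, forcing equality throughout; hence $x_0\in E_w$ and \emph{both} differences $w_i-v_i$ attain their maximum there. One then contradicts the \emph{sum} inequality on both sides. The paper therefore organizes the dichotomy by whether $x_0\in E_v$ (not by which index $i_0$ touches first), which makes the matching of \eqref{EL5}/\eqref{EL6} with their supersolution counterparts transparent: off $E_v$ use $\mathcal L_{K_2}w_2\ge f_2$ against $\mathcal L_{K_2}v_2\le f_2$; on $E_v$ (hence on $E_w$) use $\sum_i\mathcal L_{K_i}w_i\ge\sum_i f_i$ against $\sum_i\mathcal L_{K_i}v_i\le\sum_i f_i$. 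Once you reorganize your case split this way, your argument goes through.
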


\begin{proof}
We translate down the graphs of the pair $(w_1,w_2)$ in $\bar B_1$ and then we move them up till either $w_1$ touches $v_1$ or $w_2$ touches $v_2$ for the first time.

Assume by contradiction that the first contact point occurs in the interior of $B_1$. After regularizing the functions $w_i$, $v_i$ as above and relabeling the translates by $w_1$, $w_2$ we may assume we are in the following situation: 
$$w_i \le v_i, \quad w_2(x_0)=v_2(x_0) \quad \mbox{for some $x_0 \in B_1$,} $$
$(w_1,w_2)$ is a strict subsolution and $(v_1,v_2)$ is a strict supersolution at $x_0$, and $w_i$, $v_i$ are $C^{1,1}$ at $x_0$. If  at least one of the operators is local then we may assume that all the functions are $C^2$ at $x_0$ after subtracting locally a small linear function from one of the pairs, see \cite{CC}. Let $E_w:=\{w_1=w_2\}$, $E_v:=\{v_1=v_2\}$ and we distinguish 2 cases.

\smallskip

\noindent
{\it Case 1:}  $x_0 \notin E_v$. Then we contradict the inequalities for $ \mathcal L_{K_2} w_2$ and $\mathcal L_{K_2} v_2$ at $x_0$. 

\noindent
{\it Case 2:} $x_0 \in E_v$. Then $$w_1(x_0)\le v_1(x_0) =v_2(x_0)=w_2(x_0),$$ 
thus $x_0 \in E_w$ as well. Now we contradict the inequalities for the sum of the two operators at $x_0$.
\end{proof}

\begin{prop}[Existence and uniqueness of viscosity solutions]\label{evs}
Let $u_i^0 \in L^1(\R^n, d \omega_i)$ be continuous in a neighborhood of $\p B_1$, 
and let $f_i$ be continuous and bounded in $B_1$. 
Then there exists a unique viscosity solution pair $(u_1,u_2)$ to the 
two membranes problem \eqref{EL3}-\eqref{EL4}.
\end{prop}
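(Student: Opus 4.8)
\textbf{Proof strategy for Proposition \ref{evs}.} The plan is to obtain uniqueness directly from the maximum principle proved above, and to obtain existence via Perron's method, exactly as in the standard viscosity theory for integro-differential equations (see \cite{CC}, \cite{CS1}).

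\emph{Uniqueness.} Suppose $(u_1,u_2)$ and $(v_1,v_2)$ are two viscosity solution pairs with the same boundary data $u_i^0$ outside $B_1$. Since a solution is simultaneously a subsolution and a supersolution, the Maximum Principle applied to the subsolution $(u_1,u_2)$ against the supersolution $(v_1,v_2)$ gives $u_i\le v_i$ in $B_1$; reversing the roles gives $v_i\le u_i$, hence $u_i\equiv v_i$. Note this step already uses the full comparison lemma, including the delicate Case 2 where the contact point lies in both coincidence sets.

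\emph{Existence.} I would run Perron's method. Let $\mathcal S$ be the family of all subsolution pairs $(w_1,w_2)$ lying below the prescribed boundary data outside $B_1$ and below a fixed supersolution barrier (one constructs such a barrier by, e.g., solving the two uncoupled obstacle-free problems $\mathcal L_{K_i} v_i = f_i$ with the boundary data $u_i^0$ and noting the larger of the two serves as $v_1$, while $v_2:=u_2^0$-extension together with a large constant can be made a supersolution; alternatively take $v_i$ large enough in $B_1$). The family $\mathcal S$ is nonempty: translating the boundary data down by a large constant inside $B_1$ produces an admissible subsolution pair. Set
$$U_i(x):=\sup\{w_i(x): (w_1,w_2)\in\mathcal S\}.$$
The two lemmas above — that the subsolution property is preserved under taking the pointwise max of finitely many subsolutions, and under uniform limits together with weak $L^1(d\omega_i)$ convergence — are precisely what is needed to show that the upper semicontinuous envelope $(U_1^*,U_2^*)$ is again a subsolution, and that $U_i$ are in fact continuous up to $\partial B_1$ with the correct boundary values (using the barrier and the approximation a)–c) by semiconvex subsolutions). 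One then shows the lower semicontinuous envelope $(U_{1*},U_{2*})$ is a supersolution by the usual bump-function argument: if it failed to be a supersolution at some interior point, one could bump $U_i$ up slightly near that point while preserving the subsolution property and the ordering $U_2\le U_1$, contradicting maximality of $U_i$. Since $(U_1^*,U_2^*)$ is a subsolution and $(U_{1*},U_{2*})$ a supersolution with the same boundary data, the Maximum Principle forces $U_i^*\le U_{i*}$; as the reverse inequality is automatic, $U_i$ is continuous and is the desired viscosity solution.

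\emph{Main obstacle.} The genuinely nonstandard point is the interaction of Perron's construction with the discontinuous term $\chi_E$ in \eqref{EL6}: one must be careful that the bump-function perturbation used in the supersolution step, and the max/limit operations in the subsolution step, behave well as the coincidence set $E=\{w_1=w_2\}$ jumps around. The two stability lemmas already in the paper handle the max and limit operations (the key being the inclusion $\bar E\subset E_1\cup E_2$ and the case analysis $x_k\in E_k$ vs.\ $x_k\notin E_k$); the remaining care is in the bump step, where one perturbs only $U_2$ (or only $U_1$) locally so that the coincidence set can only shrink, which keeps \eqref{EL6} on the safe side. Everything else is a routine transcription of the scalar Perron argument to the pair setting.
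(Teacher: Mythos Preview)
Your proposal is correct and follows the same approach as the paper: uniqueness from the maximum principle and existence via Perron's method, using the stability of subsolutions under max and uniform limits together with continuous boundary barriers. The paper's own proof is only a sketch and differs in one cosmetic detail---it emphasizes replacing each subsolution by a larger one with a \emph{fixed} modulus of continuity (so the supremum is automatically continuous), rather than your envelope-plus-bump route---but these are standard equivalent variants of the Perron argument.
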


\begin{proof}
The proof follows the standard Perron's method and we will not sketch the details. 
We only mention that the continuity of $u_i^0$ in a neighborhood of $\p B_1$ 
allows us to construct continuous upper and lower barriers for the subsolutions and 
supersolutions (see \cite{RS}). Using this we can replace each subsolution by a larger subsolution with a fixed 
modulus of continuity in $\bar B_1$, and therefore the largest subsolution will have the same 
modulus of continuity.
\end{proof}

\subsection{ The case of different order operators.}

Next we establish the $C^{2s_2-\eps}$ interior regularity of $u_2$ in the case when $s_2>s_1$.

\

Let $(u_1,u_2)$ be a viscosity solution in $B_2$, and assume that $$\|u_i\|_{L^1(d \omega_i)} \le 1, \quad \quad \|f_i\|_{L^\infty(B_2)} \le 1.$$

Since $u_2$ is a subsolution, we use the weak Harnack inequality (see Lemma \ref{hi1} below) and obtain that $u_2 \le C$ in $B_{3/2}$. This means that $u_1$ is a subsolution in the set $\{ u_1>C\} \cap B_{3/2}$, hence we apply Lemma \ref{hi1} one more time and bound $u_1$ by above in $B_1$. Similarly we bound $u_i$ by below and obtain
$$\|u_i\|_{L^\infty(B_1)} \le C.$$ 
Let 
\begin{equation}\label{vd}
v:= \chi_{B_1}u_2
\end{equation}
be the restriction of $u_2$ to $B_1$, and $x \in E \cap B_{1/2}$ (see \eqref{Es}). Then, since $v \le u_1$ in $B_1$, and $v(x)=u_1(x)$ we find
$$\mathcal L_{K_1} u_1 (x) \ge \mathcal L_{K_1} v(x) + \int_{\mathcal C B_1} (u_1(y)-v(x))\, K(y-x) dy,$$
hence
$$\mathcal L_{K_1} u_1(x)  \ge \mathcal L_{K_1} v(x) - C.$$

Moreover, for any $x \in B_{1/2}$ we have
$$|\mathcal L_{K_2}u_2(x)-\mathcal L_{K_2}v(x)| \le C,$$
in the viscosity sense. Combining the last two inequalities with the fact that $u_2$ is a subsolution and $(u_1,u_2)$ is a supersolution pair in the sense of Definition \ref{dvis} we obtain the following corollary.

\begin{cor}
The function $v$ defined in \eqref{vd} satisfies in $B_{1/2}$
\begin{equation}\label{v1}
\mathcal L_{K_2} v \ge -M, \end{equation}
\begin{equation}\label{v2}
\mathcal L_{K_2} v + \chi_E \mathcal L_{K_1} v \le M
\end{equation}
with $M$ a constant depending on $n$, $s_i$, $\lambda$, $\Lambda$.
\end{cor}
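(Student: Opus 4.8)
The plan is to derive the two inequalities \eqref{v1} and \eqref{v2} for $v := \chi_{B_1} u_2$ by transferring the viscosity information about the pair $(u_1,u_2)$ to $v$ and absorbing all the ``tail'' contributions — the parts of the integro-differential operators coming from $\mathcal C B_1$ — into the constant $M$. The essential point is that, by the preceding paragraph, $\|u_i\|_{L^\infty(B_1)}\le C$ and $\|u_i\|_{L^1(d\omega_i)}\le 1$, so these tails are uniformly bounded.

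First I would establish \eqref{v1}. Let $P$ be a quadratic polynomial touching $v$ by below at a point $x\in B_{1/2}$. Since $v=u_2$ in $B_1$ and $v=0\le$ (a constant) outside, $P$ also touches $u_2$ by below at $x$ when restricted to a small neighborhood, so the subsolution property $\mathcal L_{K_2}u_2(x)\ge f_2(x)\ge -1$ applies. Then I write
$$\mathcal L_{K_2} v(x) = \mathcal L_{K_2} u_2(x) - \int_{\mathcal C B_1}(u_2(y) - v(y))\,K_2(y-x)\,dy,$$
and the correction integral is bounded in absolute value by $C(\|u_2\|_{L^\infty(B_1)} + \|u_2\|_{L^1(d\omega_2)})\le C$ because for $x\in B_{1/2}$ and $y\in\mathcal C B_1$ one has $|y-x|\ge c$ and $K_2(y-x)\le \Lambda\,\omega_2(y)$ up to constants. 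Hence $\mathcal L_{K_2}v(x)\ge -M$, which is \eqref{v1}.

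Next, for \eqref{v2}, take a $C^2$ function $\phi$ touching $v$ (equivalently $u_2$ near $x$) by above at $x\in B_{1/2}$, and split into the two cases of Definition \ref{dvis}. If $x\notin E$ then $u_2$ satisfies $\mathcal L_{K_2}u_2(x)\le f_2(x)\le 1$, and subtracting the bounded tail as above gives $\mathcal L_{K_2}v(x)\le M$; since $\chi_E=0$ here, \eqref{v2} holds. If $x\in E\cap B_{1/2}$, then $\phi$ also touches $u_1$ by above at $x$, and the supersolution inequality for the sum gives $\mathcal L_{K_1}u_1(x)+\mathcal L_{K_2}u_2(x)\le f_1(x)+f_2(x)\le 2$. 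Now I use the two comparisons from the text: from $v\le u_1$ in $B_1$ with equality at $x$,
$$\mathcal L_{K_1} u_1(x) \ge \mathcal L_{K_1} v(x) + \int_{\mathcal C B_1}(u_1(y)-v(x))\,K_1(y-x)\,dy \ge \mathcal L_{K_1} v(x) - C,$$
and similarly $\mathcal L_{K_2}u_2(x)\ge \mathcal L_{K_2}v(x) - C$. Substituting these lower bounds into the sum inequality yields $\mathcal L_{K_1}v(x)+\mathcal L_{K_2}v(x)\le M$, which is exactly \eqref{v2} with $\chi_E=1$.

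The only real subtlety — and the step I expect to need the most care — is the bookkeeping that makes these manipulations legitimate \emph{in the viscosity sense} rather than pointwise: one must check that when $\phi$ touches $v$ from one side at $x$, the expression $\mathcal L_{K_i}\phi(x) + (\text{tail of }v)$ equals the corresponding $\mathcal L_{K_i}u_i$ evaluated in the viscosity sense at $x$, i.e.\ that replacing $v$ by $\phi$ in a $\delta$-neighborhood of $x$ produces the same inequality up to $o_\delta(1)$ — this is the same argument used in Lemma \ref{visc} and Lemma \ref{vi}, and invoking it is enough. The constants $C$ in the tail estimates depend only on $n$, $s_i$, $\lambda$, $\Lambda$ (through the bound $K_i(z)\le \Lambda|z|^{-n-2s_i}$ and the $L^\infty$/$L^1(d\omega_i)$ bounds already proven), so collecting them into a single $M$ gives the stated dependence.
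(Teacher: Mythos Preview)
Your overall strategy is the paper's: bound the tails using $\|u_i\|_{L^\infty(B_1)}\le C$ and $\|u_i\|_{L^1(d\omega_i)}\le 1$, then transfer the viscosity inequalities for $(u_1,u_2)$ to $v$. But you have the touching directions reversed, and this breaks the argument at two places.

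For \eqref{v1}, the inequality $\mathcal L_{K_2}v\ge -M$ is a \emph{subsolution} inequality; in the paper's convention (see the definition just before Lemma~\ref{visc}) it is verified at points where $v$ is touched \emph{from above}. You touch from below, so you cannot invoke the subsolution property $\mathcal L_{K_2}u_2\ge f_2$ at $x$. Symmetrically, \eqref{v2} is a supersolution inequality, so the test function $\phi$ must touch $v$ \emph{from below}; you take it from above. The paper says this explicitly right after the Corollary: ``if $v$ admits a tangent $C^2$ function by below at a point $x$, then we satisfy two different inequalities depending whether or not $x$ is in $E$.''

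This reversal is not just cosmetic. In the case $x\in E$ you write ``then $\phi$ also touches $u_1$ by above at $x$'': this is false. Since $u_1\ge u_2$ and $u_1(x)=u_2(x)$, a function touching $u_2$ from above need not lie above $u_1$. The correct implication, and the one used both in Lemma~\ref{vi} and in the paper's derivation, goes the other way: if $\phi$ touches $u_2$ from \emph{below} at $x\in E$, then $\phi\le u_2\le u_1$ with equality at $x$, so $\phi$ touches $u_1$ from below, both $\mathcal L_{K_i}u_i(x)$ are well-defined (in $(-\infty,+\infty]$), and Lemma~\ref{vi} gives the sum inequality you need. Once you swap the directions, your tail computations and the comparison $\mathcal L_{K_1}u_1(x)\ge \mathcal L_{K_1}v(x)-C$ are exactly the paper's argument.
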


Inequality \eqref{v2} contains the discontinuous term $\chi_E$ and it is understood in the viscosity sense . Precisely, if $v$ admits a tangent $C^2$ function by below at a point $x$, then we satisfy two different inequalities depending whether or not $x$ is in $E$.   

 Since $s_2>s_1$ then the term  $\chi_E \mathcal L_{K_1} v$ can be treated as a perturbation. 
Then \eqref{v1}-\eqref{v2} can be thought heuristically as saying that $\mathcal L_{K_2}v \in L^\infty$, 
and we can infer that $v \in C^\beta$ for any $\beta <2 s_2$. We use the convention that when $\beta \in (1,2)$, the class $C^\beta$ denotes the class $C^{1,\beta-1}$. 
 We prove this statement rigorously in the next proposition.

\begin{prop}\label{l21} Assume $s_2>s_1$, and that $v$ is a continuous function supported in $B_1$ which satisfies  \eqref{v1}-\eqref{v2} for some closed set $E$. Then $v \in C^\beta$ for any $\beta<2s_2$ and
$$\|v\|_{C^\beta(B_{1/4})} \le C(\|v\|_{L^\infty}+M),$$
with $C$ a constant depending on $n$, $s_i$, $\lambda$, $\Lambda$ and $\beta$.
\end{prop}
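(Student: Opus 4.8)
The plan is to bootstrap the regularity of $v$ starting from the one-sided bounds \eqref{v1}--\eqref{v2}, treating the $\chi_E \mathcal{L}_{K_1} v$ term as a lower-order perturbation because $s_1 < s_2$. First I would observe that the function $v$ is bounded (being supported in $B_1$ with $\|v\|_{L^\infty} < \infty$), so $\mathcal{L}_{K_2} v$ and $\mathcal{L}_{K_1} v$ make sense as soon as we have a little regularity at a point. The strategy is an increasing sequence of exponents $\beta_0 < \beta_1 < \cdots < 2s_2$: at each stage, knowing $v \in C^{\beta_j}$ on a slightly larger ball, I show $v \in C^{\beta_{j+1}}$ on a slightly smaller ball, gaining a fixed amount $2s_1 - \eta$ (or reaching the regularity ceiling imposed by the operator of order $2s_2$) at each step, until I pass $2s_2$.

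The key mechanism at each stage: suppose $v \in C^{\beta_j}(B_r)$ with $\beta_j + 2s_1 > 2s_2$ already (the later stages); then for a point $x$ where $v$ has a tangent paraboloid from below, the quantity $\mathcal{L}_{K_1} v(x)$ is finite and in fact bounded by $C(\|v\|_{C^{\beta_j}(B_r)} + \|v\|_{L^\infty})$ using the $C^{\beta_j}$-bound on the near contribution (since $\beta_j > 2s_1$ eventually) and the $L^\infty$/$d\omega$-bound on the far contribution. Hence \eqref{v2} upgrades to the clean two-sided bound $-M \le \mathcal{L}_{K_2} v \le M'$ in the viscosity sense on $B_r$, with $M'$ controlled. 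At that point I invoke the interior Schauder estimate for the translation-invariant operator $\mathcal{L}_{K_2}$ of order $2s_2$ (bounded right-hand side gives $C^{2s_2 - \eps}$, via the appendix's Schauder theory, or equivalently Caffarelli--Silvestre regularity for $\mathcal{L}_{K_2}$ with bounded RHS) to conclude $v \in C^{\beta}(B_{r'})$ for any $\beta < 2s_2$. For the \emph{initial} stages, where $\beta_j \le 2s_1$, I cannot yet bound $\mathcal{L}_{K_1}v$ pointwise by a $C^{\beta_j}$ norm; instead I absorb the $\chi_E \mathcal{L}_{K_1}v$ term by a Hölder-norm interpolation: $\|\mathcal{L}_{K_1}v\|$ at scale $\rho$ around $x$ scales like $\rho^{\beta_j - 2s_1} [v]_{C^{\beta_j}}$ plus a bounded far part, and since we only need the inequality in the viscosity sense against a fixed test paraboloid at a fixed (small) scale, this contributes a term that, after rescaling, is a small multiple of $[v]_{C^{\beta_j}}$ — small enough to be absorbed into the left side. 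Concretely I would run the standard iteration: rescale $v$ around $x_0 \in B_{1/4}$ to unit scale, apply the one-step improvement lemma $\|v - \ell\|_{L^\infty(B_\rho)} \le \rho^{\beta_{j+1}}$ for some affine $\ell$ (or polynomial of degree $< \beta_{j+1}$), and sum the geometric series; the perturbation term is killed by choosing the scaling ratio $\rho$ small, exactly as in the Caffarelli--Silvestre perturbative scheme.

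In more detail, the engine is the following dichotomy at each point $x_0 \in B_{1/4}$ and each dyadic scale. Either $x_0$ stays outside $E$ after rescaling, in which case $v$ solves a genuine equation $|\mathcal{L}_{K_2} v| \le M$ at $x_0$ and we have the full $C^{2s_2-\eps}$ estimate; or $x_0 \in E$, in which case \eqref{v2} only gives the combined inequality, but $\chi_E\mathcal{L}_{K_1}v$ is of strictly lower order, so the same compactness/perturbation argument applies with an error that vanishes under rescaling. Iterating this yields a uniform modulus of continuity for $v$ and its derivatives up to order $\lfloor \beta \rfloor$, i.e.\ $v \in C^\beta(B_{1/4})$ with the asserted estimate, for every $\beta < 2s_2$. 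The constant degenerates as $\beta \uparrow 2s_2$, which is why $\beta$ enters the constant $C$ but not $\eps_0$-type quantities.

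\textbf{Main obstacle.} The delicate point is the very first step, where $\beta$ is below $2s_1$ and $\mathcal{L}_{K_1}v$ is \emph{not} yet pointwise finite: I must make precise that the viscosity inequality \eqref{v2}, read against a tangent paraboloid at scale $\rho$, produces only an $O(\rho^{\beta-2s_1})$-order contribution from the $K_1$ term (after subtracting the symmetric part), so that the rescaled increment can genuinely be absorbed. This requires carefully tracking how the tail of the $K_1$-integral behaves against a $C^\beta$ function with $\beta < 2s_1$, using the symmetry of $K_1$ to cancel the divergent first-order part, exactly as in the definition of $\mathcal{L}_K$ on $C^{2s+\eps}$ functions given in the paper. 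Once the bootstrap enters the regime $\beta_j > 2s_1$ it is routine: the $K_1$ term is honestly bounded and everything reduces to scalar Schauder/Hölder estimates for a single operator of order $2s_2$ with bounded right-hand side.
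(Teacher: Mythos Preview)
Your high-level intuition is right --- the $\chi_E\mathcal L_{K_1}v$ term is lower order and should be treated as a perturbation --- and your ``later stages'' argument is fine: once $v\in C^{\beta_j}$ with $\beta_j>2s_1$, the quantity $\mathcal L_{K_1}v$ is pointwise bounded, \eqref{v2} upgrades to a genuine upper bound on $\mathcal L_{K_2}v$, and the $C^{2s_2-\eps}$ estimate follows in one shot. The problem is entirely in your ``initial stages'' mechanism, and it is a real gap, not just imprecision.

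You write that the near part of $\mathcal L_{K_1}v$ at scale $\rho$ is of size $\rho^{\beta_j-2s_1}[v]_{C^{\beta_j}}$ and that ``after rescaling'' this is small enough to absorb. But when $\beta_j<2s_1$ the exponent $\beta_j-2s_1$ is \emph{negative}, so this quantity blows up as $\rho\to 0$; it cannot be absorbed, and no amount of symmetry cancellation rescues it (symmetry kills the first-order part, not the $C^\beta$ part with $\beta<2s_1$). The bootstrap as you describe it therefore never gets off the ground: you cannot reach $\beta_j>2s_1$ by this route. Relatedly, the phrase ``gaining a fixed amount $2s_1-\eta$ at each step'' has no clear origin; neither your later-stages argument (which jumps straight to $C^{2s_2-\eps}$) nor any Schauder iteration produces a gain of $2s_1$.

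The correct smallness parameter is \emph{not} tied to the regularity of $v$ at all --- it comes purely from the difference in orders. If you rescale $\tilde v(x)=r^{-\beta}(v-l_r)(rx)$ as in the standard Campanato iteration for the target exponent $\beta<2s_2$, then \eqref{v1}--\eqref{v2} become
\[
\mathcal L_{\tilde K_2}\tilde v \ge -Cr^{2s_2-\beta},\qquad
\mathcal L_{\tilde K_2}\tilde v + r^{2(s_2-s_1)}\,\chi_{\tilde E}\,\mathcal L_{\tilde K_1}(\chi_{B_2}\tilde v)\le Cr^{2s_2-\beta},
\]
so the coefficient in front of the $K_1$ term is $r^{2(s_2-s_1)}$, which is small for $r$ small regardless of $\beta$. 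This is what the paper exploits: on $B_2$ one has $\tilde K_1\le C\tilde K_2$ (since $|y|^{-n-2s_1}\le C|y|^{-n-2s_2}$ for $|y|\le 2$), so the combined operator lies between extremal Pucci operators of order $2s_2$ with ellipticity constants that are uniform once $r$ is small. One then applies the nonlocal Harnack inequality to get equicontinuity of the $\tilde v$'s, passes to a limit $\bar v$ satisfying a clean equation $|\mathcal L_{\bar K}(\chi_{B_{3/4}}\bar v)|\le C$, and reads off the improvement-of-flatness from the Schauder estimate for $\bar v$. No intermediate bootstrap through $\beta_j$'s is needed; the iteration runs directly at the target $\beta$.

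In short: replace your $\rho^{\beta_j-2s_1}$ by $r^{2(s_2-s_1)}$, drop the multi-stage bootstrap, and run a single compactness/perturbation iteration \`a la Caffarelli--Silvestre at the desired $\beta<2s_2$. That is the paper's argument, and without this correction your initial step does not close.
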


\begin{proof}
The lemma can be deduced from the arguments of Caffarelli and Silvestre in \cite{CS2}. Since 
their results do not apply directly to our setting, we will sketch the proof of the proposition for 
completeness.

After multiplication by a small constant we may assume that $M=1$ and $\|v\|_{L^\infty(B_1)}$ is sufficiently small. 

We need to show that if for all balls $B_{r}$ with $r=2^{-l}, l=0,1, \ldots, k$ for some $k \ge k_0$, we have
\begin{equation}\label{lr}
|v-l_r| \le r^\beta \quad \mbox{in $B_r$,} 
\end{equation}
with $l_r$ a constant if $\beta<1$ or a linear function if $\beta>1$, and $l_1 \equiv 0$, 
then \eqref{lr} holds also in $B_{\rho r}$ for some $l_{\rho r}$ where $\rho=2^{-m_0}$. Here the constants $m_0$, $k_0$ depend on $\beta$ and the universal constants.
Then we can iterate \eqref{lr} indefinitely and obtain the desired conclusion.

The existence of $k_0$ is obtained by compactness. Indeed, assume that \eqref{lr} holds up to $r=r_k$ for some large $k$. 
Notice that the coefficients of $l_r$ are bounded by a fixed constant,
hence the rescaling $$\tilde v(x)=r^{-\beta}(v-l_r)(rx),$$
satisfies
$$\|\tilde v\|_{L^\infty(B_1)}\le 1, \quad \quad |\tilde v(x)| \le C_0 |x|^\beta \quad \mbox{outside $B_1$.}$$
Next we write \eqref{v1}-\eqref{v2} in terms of $\tilde v$. We have
$$\mathcal L_{K_2} v(x)=\mathcal L_{K_2} \left(l_r+r^\beta \tilde v(\frac x r) \right)=r^{\beta-2 s_2} \, \, \mathcal L_{\tilde K_2}\tilde v  \, (\frac x r).$$
We estimate $\mathcal L_{K_1} v$ by writing
$$v(x)=\chi_{B_1}v(x)=\chi_{B_1}l_r(x) + \chi_{B_1\setminus B_{2r}} \, \, r^\beta \tilde v(\frac x r) + \chi_{B_{2r}} \, \, r^\beta \tilde v(\frac x r) =:v_1+v_2+v_3.$$
We have $|\mathcal L_{K_1} v_1| \le C$ in $B_r$.
Without loss of generality we may assume that $\beta>2 s_1$ which, by the growth of $\tilde v$ outside $B_1$ gives
$|\mathcal L_{K_1} v_2| \le C$ in $B_r$. 
Also 
$$\mathcal L_{K_1} v_3(x)=r^{\beta-2s_1} \, \mathcal L_{ \tilde K_1} (\chi_{B_2} \tilde v) (\frac x r).$$
In conclusion $\tilde v$ satisfies in $B_1$ the following inequalities
\be \label{k2v1}
\mathcal L_{\tilde K_2} \tilde v \ge -Cr^{2s_2-\beta},
\ee
\be\label{k2v2}
\mathcal L_{\tilde K_2} \tilde v + r^{2(s_2-s_1)} \, \, \chi_{\tilde E} \, 
\mathcal L_{\tilde K_1} (\chi_{B_2} \tilde v) \le C r^{2s_2-\beta}.
\ee
The function $\tilde v$ is both a subsolution and a supersolution for integro-differential equations 
with measurable kernels and bounded right hand side. Since $r^{2(s_2-s_1)}$ is small, the 
two operators above are bounded by two extremal Pucci operators of order $2 s_2$. We
apply the Harnack inequality for integro-differential equations from \cite{CS1} and obtain that 
$\tilde v$ is uniformly H\"older continuous in $B_{3/4}$. 
This means that as $r \to 0$ (or equivalently as $k \to \infty$), the corresponding $\tilde v$'s 
converge uniformly on a subsequence  
to a limit $\bar v$. We claim that $\bar v$ satisfies
$$|\mathcal L_{\bar K}(\chi_{B_{3/4}} \bar v)| \le C \quad \mbox{ in $B_{1/2}$},$$ 
where $\bar K$ is the weak limit of the $\tilde K_2$'s.

Indeed, let $\tilde w:= \chi_{B_{3/4}} \tilde v $, then \eqref{k2v1}-\eqref{k2v2} give
$$\mathcal L_{\tilde K_2} \tilde w \ge -C,  \quad \quad \quad \mathcal L_{\tilde K_2} \tilde w + r^{2(s_2-s_1)} \, \, \chi_{\tilde E} \, 
\mathcal L_{\tilde K_1} \tilde w \le C,$$
with $r^{2(s_2-s_1)} \to 0$. Now we can pass to the limit in these inequalities and use that
$\mathcal L_{\tilde K_2}\psi (x) \to \mathcal L_{ \bar K} \psi(x) $ for any test function $\psi \in C^2$ near $x$, and obtain the claim.

The existence of $l_{\rho r}$ with $\rho=2^{-m_0}$ universal, now follows from the $C^{\beta+\eps}$ estimates, with $\beta + \eps<2s_2$, of the solution $\bar v$ above, see Proposition \ref{SE}, part $a)$. 
\end{proof}

\begin{rem}
We are not concerned in obtaining estimates that remain uniform as the order of 
the operators approaches 2.

The Harnack inequality for $\tilde v$ can be checked also directly by using the methods of 
Silvestre in \cite{S2}. For this we slide parabolas by above and below till they touch the graph of $
\tilde v$. Then we use the equation only at these points and show that the oscillation of $\tilde v$ 
decays at a geometric rate as we restrict to dyadic balls. We will use this method more precisely in 
Section 5, see Step 1 in Proposition \ref{po1}.

We remark the same argument works as well in the case when $\mathcal L_{K_2}$ is a local operator, 
and then we need to use the ABP measure estimate, see \cite{Sa} for example.

\end{rem}

Proposition \ref{l21} provides the initial $C^{2 s_2 -\eps}$ interior regularity of the function $u_2$. 
Now we can view the function $u_1$ as the solution to the obstacle problem with obstacle $u_2$.
Therefore in our analysis it is important to obtain regularity of solutions to the obstacle 
problem with not necessarily $C^2$ obstacle. 
In the next two sections we show that $u_1$ is as regular as the obstacle up to 
$C^{\max \{1,2s_1 \} + \eps}$ regularity in the case of translation invariant kernels, 
and up to $C^{1+s_1}$-regularity in the case of the fractional Laplacian. 

Then we can successively improve the regularity of $u_2$ and $u_1$ and obtain Theorems \ref{T2} and \ref{T3}. 

\medskip

{\it Proof of Theorem \ref{T2}.}  
From Theorem \ref{T20} in Section 5 we have that $u_1$ is as regular as $u_2$ up to $C^{\max \{1,2s_1 \} + \eps}$ regularity, and $u_2 \in C^{2s_2-\eps}$ by Proposition \ref{l21}. From the Schauder estimates for the equation $\mathcal L_Ku=f$, see Proposition \ref{SE} in the Appendix, this implies that $\mathcal L_{K_1} u_1 \in C^\eps$. Thus $\mathcal L_{K_2} u_2 \in C^\eps$ which gives
$u_2 \in C^{2s_2+\eps}$. Now we can iterate this argument and obtain the desired conclusion.
\qed

\section{The obstacle problem for translation invariant kernels}

In this section we make a detour to provide two regularity results for the general obstacle problem in the case of symmetric, translation invariant operators 
$\mathcal L_K$ as above. We then apply
these results to the two membranes problem.

In addition to \eqref{Kc} we need to impose the extra regularity assumption on $K$, i.e.
\begin{equation}\label{Kc1}
|\nabla K(y)| \le \Lambda |y|^{-(n+1+2s)}.
\end{equation}

Assume that $u$ is a solution of the obstacle problem in $B_1$ with obstacle $\varphi$ by below. 
Precisely we assume that $u$, $\varphi$ are continuous in $B_1$, $u \in L^1(\R^n, d \omega)$, 
and
\begin{equation}\label{opti1}
u \ge \varphi \quad \mbox{in $B_1$} ,
\end{equation}
\begin{equation}\label{opti2}
\mathcal L_{K} u \le f \quad \mbox{in $B_1$}, \quad \mbox{and} \quad \quad \quad \mathcal L_{K} u = f \quad \mbox{in} \quad \{u>\varphi\} \cap B_1.  
\end{equation}

Our main result of this section says that up to $C^{1,\eps_0 }$ with $\eps_0$ universal, the solution $u$ is as regular as the obstacle $\varphi$.
Moreover,  in the case $s > \frac 12$, the  $C^{1,\eps_0}$ regularity can be improved to $C^{2s+\eps_0}$. 
\begin{thm}\label{T20}
Let $u$ is a solution to the obstacle problem \eqref{opti1}, \eqref{opti2}, with kernel $K$ that satisfies \eqref{Kc},\eqref{Kc1}, and assume that
$$\|u\|_{L^1(\R^n, d \omega)}, \|\varphi\|_{C^\beta(B_1)}, \|f\|_{C^{0,1}(B_1)} \le 1.$$
for some $\beta \ne 2s$. 

Then $u \in C^\alpha(B_1)$ for $\alpha = \min \{\beta, \max \{1, 2s\}+ \eps_0 \}$ and
 $$\|u\|_ {C^\alpha(B_{1/2})} \le C,$$
where $\eps_0$ depends on $n$, $\lambda$, $\Lambda$, $s$, and the constant $C$ may depend also on $\beta$.
\end{thm}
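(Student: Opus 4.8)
The plan is to prove the regularity by a dyadic improvement-of-approximation scheme at a free boundary point (say the origin, after translation), very much in the spirit of the proof of Proposition \ref{l21}. The statement will follow once we show: if $u$ is approximated in $B_r$ by the obstacle's Taylor polynomial up to an error $r^\alpha$ for all dyadic $r = 2^{-l}$, $l \le k$, then the same holds in $B_{\rho r}$ for a universal $\rho = 2^{-m_0}$. Write $\alpha = \min\{\beta, \max\{1,2s\}+\eps_0\}$ and let $P_r$ denote the Taylor polynomial of $\varphi$ at $0$ of the appropriate degree (degree $\lfloor\alpha\rfloor$), so that $|\varphi - P_r| \le C r^\beta \le Cr^\alpha$ in $B_r$. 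The inductive hypothesis is $|u - Q_r| \le r^\alpha$ in $B_r$ with $Q_r$ of the same form; note $Q_r - P_r$ is a constant (if $\alpha < 1$) or affine (if $\alpha > 1$) since $u \ge \varphi$ with equality at the origin forces the lower-order jets to match. The constancy/affineness of $Q_r - P_r$, together with the fact that $\mathcal L_K$ annihilates polynomials of degree $< 2s$ appropriately, will be what makes the rescaling go through.

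First I would set up the rescaling $\tilde u(x) = r^{-\alpha}(u - Q_r)(rx)$, and likewise $\tilde \varphi = r^{-\alpha}(\varphi - Q_r)(rx)$, so that $\|\tilde u\|_{L^\infty(B_1)} \le 1$, $|\tilde u(x)| \le C_0|x|^\alpha$ outside $B_1$ (from summing the dyadic bounds), $\tilde u \ge \tilde \varphi$, and $\|\tilde\varphi\|_{C^\beta(B_1)} \le C r^{\beta-\alpha} \le C$. Rewriting \eqref{opti2}: away from the contact set $\mathcal L_{\tilde K}\tilde u = r^{2s-\alpha} f(r\cdot) =: \tilde f$, and everywhere $\mathcal L_{\tilde K}\tilde u \le \tilde f$, where $\tilde K(y) = r^{n+2s}K(ry)$ and $\mathcal L_K Q_r$ contributes a bounded term because $Q_r$ has degree $< 2s$ when $\alpha \le 2s$ — and when $2s < \alpha$ (the $s > 1/2$ regime with $\alpha$ near $2s + \eps_0$) the quadratic part of $Q_r$ produces an $O(1)$ term as well since $\mathcal L_{\tilde K}$ of a quadratic is bounded. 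In all cases $|\tilde f|$ is bounded (this uses $\alpha \le \max\{1,2s\}+\eps_0 \le 2s + 1$, so $2s - \alpha \ge -1$... more carefully $r^{2s-\alpha}$ is bounded as $r\to 0$ only when $\alpha \le 2s$; when $\alpha > 2s$ one instead absorbs the blow-up using the $C^{0,1}$ smoothness of $f$ by subtracting $f(0)$, exactly as in the Schauder argument of Proposition \ref{SE}). The upshot is that $\tilde u$ solves an obstacle problem with a $C^\beta$ obstacle, bounded right-hand side, controlled growth at infinity, and kernel $\tilde K$ satisfying the same bounds \eqref{Kc}, \eqref{Kc1} as $K$.

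Next, by compactness: let $r = 2^{-k} \to 0$ along a subsequence; by the interior Hölder estimate for solutions of the obstacle problem with measurable kernels (Caffarelli–Silvestre \cite{CS1}, \cite{CS2}), the $\tilde u$'s are equicontinuous in $B_{3/4}$ and converge uniformly, and $\tilde K \rightharpoonup \bar K$, $\tilde\varphi \to \bar\varphi$ with $\bar\varphi$ a polynomial of degree $< \alpha$ (in fact $\le \lfloor\alpha\rfloor$, possibly $\equiv 0$). The limit $\bar u$ solves the obstacle problem $\bar u \ge \bar\varphi$, $\mathcal L_{\bar K}\bar u \le f(0)$, $\mathcal L_{\bar K}\bar u = f(0)$ on $\{\bar u > \bar\varphi\}$, with constant right-hand side and polynomial obstacle. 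For such a limiting problem the obstacle is itself $\mathcal L_{\bar K}$-subharmonic up to a constant (a polynomial of degree $< 2s$), so $\bar u - \bar\varphi$ solves a genuine obstacle problem for $\mathcal L_{\bar K}$ with bounded right-hand side; the known optimal regularity for the obstacle problem with translation-invariant kernels — $C^{1,\eps_0}$ in general, $C^{2s+\eps_0}$ when $s > 1/2$, cf. Proposition \ref{SE}(a) and the flatness/iteration there — gives $\bar u \in C^{\alpha + \eps}$ near $0$ with $\alpha + \eps \le \max\{1,2s\} + \eps_0$, hence a polynomial $\bar Q$ of the right degree with $|\bar u - \bar Q| \le \frac12\rho^\alpha$ in $B_\rho$ for $\rho = 2^{-m_0}$ small universal. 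Transferring this back to $\tilde u$ for $k$ large (so $\|\tilde u - \bar u\|_{L^\infty(B_{3/4})}$ is tiny) produces the desired $Q_{\rho r}$ and closes the induction; the base case (existence of $k_0$) is the same compactness statement.

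The main obstacle I expect is the limiting step and the correct handling of the contact set under rescaling: one must be sure that the limit $\bar u$ is still a solution of the obstacle problem and not merely of $\mathcal L_{\bar K}\bar u \le f(0)$ — i.e. that the complementarity $\mathcal L_{\bar K}\bar u = f(0)$ on $\{\bar u > \bar\varphi\}$ survives passing to the limit. This requires knowing that a point where $\bar u > \bar\varphi$ strictly is, for large $k$, inside $\{\tilde u > \tilde\varphi\}$, which follows from uniform convergence of both $\tilde u$ and $\tilde\varphi$; but near the free boundary this is delicate and is precisely where the argument of \cite{CS2} must be adapted rather than quoted. A secondary technical point is the split into the cases $\beta < 2s$ and $\beta > 2s$ (hence $\alpha$ determined by either $\beta$ or $\max\{1,2s\}+\eps_0$), and, within the latter, the separate treatment of $s \le 1/2$ versus $s > 1/2$, since the degree of the approximating polynomial and the power $r^{2s-\alpha}$ behave differently; each case is routine once the scheme above is in place, but they must all be checked. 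Throughout, the assumption $\beta \ne 2s$ is used exactly to avoid the resonant case where $r^{2s-\alpha}$ with $\alpha = 2s$ fails to be summable/bounded and logarithmic losses appear.
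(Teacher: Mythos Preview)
Your compactness/blow-up scheme has a genuine circularity at the limiting step. After passing to the limit you obtain a function $\bar u$ which solves an obstacle problem for $\mathcal L_{\bar K}$ with polynomial obstacle $\bar\varphi$ and constant right-hand side, and you then invoke ``the known optimal regularity for the obstacle problem with translation-invariant kernels --- $C^{1,\eps_0}$ in general, $C^{2s+\eps_0}$ when $s>1/2$'' to conclude $\bar u\in C^{\alpha+\eps}$. But this is precisely the content of Theorem~\ref{T20}: no such regularity result for the obstacle problem with general translation-invariant kernels is available in the paper (or in \cite{CS1},\cite{CS2},\cite{S2}) prior to this theorem. Proposition~\ref{SE}(a), which you cite, is a Schauder estimate for the \emph{unconstrained} equation $\mathcal L_K v=f$ and says nothing about the obstacle problem. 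Subtracting the polynomial obstacle does not help: $\bar u-\bar\varphi$ still solves a genuine obstacle problem (zero obstacle, bounded right-hand side), and the $C^{\max\{1,2s\}+\eps_0}$ regularity of \emph{that} is exactly what you are trying to prove. A secondary gap is that the equicontinuity you need to extract a uniformly convergent subsequence of the $\tilde u$'s also requires a H\"older estimate for the obstacle problem, which \cite{CS1},\cite{CS2} do not supply directly.

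The paper's proof is entirely different and avoids compactness. It splits into Propositions~\ref{po1} and~\ref{po2} and works by direct dyadic iteration at a contact point. The essential new idea --- which your proposal does not contain --- is that discrete difference quotients $u_h^e=(u(\cdot+he)-u)/h^\alpha$ are \emph{subsolutions} in the set $\{u_h^e>\tfrac12 r^{\eps_0}\}$: the only points where $\mathcal L_K u_h^e\ge f_h^e$ can fail are those with $x+he\in\{u=\varphi\}$, and there $u_h^e\le\varphi_h^e$ is small because $\varphi\in C^\beta$. This observation, combined with a sliding-paraboloid oscillation argument (the $C^{\alpha_0}$ step) and a measure estimate on $\{\tilde u>1/2\}$, yields the decay $u_h^e\le r^{\eps_0}$ inductively and hence the bootstrap $C^\alpha\Rightarrow C^{\alpha+\eps_0}$ up to $\alpha=1$. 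For $s>1/2$ a separate integral-decay induction (Proposition~\ref{po2}) pushes to $2s+\eps_0$. None of these steps reduce to a limiting obstacle problem.
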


Before we proceed with the proof of Theorem \ref{T20} we write two versions of Harnack inequality for nonlocal equations which deal with $L^\infty$ bounds for subsolutions.

\begin{lem}\label{hi1}
Assume that $v$ is continuous in $\overline{B}_1$, $\| v^+\|_{L^1(\R^n,d \omega)} \le 1$, and
$$\mathcal L_K v \ge -1 \quad \quad \mbox{in} \quad \{v>1\} \cap B_1.$$
Then $v \le C$ in $B_{1/2}$ with $C$ depending only on $n$, $s$, $\lambda$, $\Lambda$.
\end{lem}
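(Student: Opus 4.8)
The plan is to reduce the claim to a standard $L^\infty$ estimate for subsolutions of a translation invariant nonlocal equation, the only twist being that the differential inequality $\mathcal L_K v \ge -1$ is assumed only on the sublevel-dependent open set $\{v>1\}$ rather than on all of $B_1$. First I would replace $v$ by $w := (v-1)^+$, so that $w \ge 0$ everywhere, $w = 0$ outside $\{v>1\}$, and $\|w\|_{L^1(\R^n,d\omega)} \le \|v^+\|_{L^1(d\omega)} \le 1$ (after noting $(v-1)^+ \le v^+$). The key observation is that $w$ is a subsolution of $\mathcal L_K w \ge -1$ in all of $B_1$ in the viscosity sense: at any point $x \in B_1$ where a quadratic polynomial $P$ touches $w$ from above, either $w(x) > 0$, in which case $v(x) > 1$ and $v$ is touched from above by $P+1$ so $\mathcal L_K v(x) \ge -1$, and since $v - 1 \le w$ with equality at $x$ we get $\mathcal L_K w(x) \ge \mathcal L_K (v-1)(x) = \mathcal L_K v(x) \ge -1$; or $w(x) = 0$, in which case $w \ge 0 = w(x)$ globally forces $\mathcal L_K w(x) \ge 0 \ge -1$ directly from the integral formula (the averages $a_w(r)$ are nonnegative). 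So $w$ is a nonnegative viscosity subsolution of $\mathcal L_K w \ge -1$ in $B_1$ with $\|w\|_{L^1(\R^n, d\omega)} \le 1$.

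Next I would invoke the $L^\infty$ bound for subsolutions of integro-differential equations with kernels comparable to that of $(-\Delta)^s$, i.e. the local maximum principle / weak Harnack estimate of Caffarelli--Silvestre \cite{CS1}: a nonnegative function $w$ with $\mathcal L_K w \ge -1$ in $B_1$ and $\|w\|_{L^1(\R^n,d\omega)} \le 1$ satisfies $\sup_{B_{1/2}} w \le C(n,s,\lambda,\Lambda)$. The extremal operator bound $\mathcal M^-_{\mathcal L_0} w \le \mathcal L_K w$ (with $\mathcal L_0$ the class of kernels satisfying \eqref{Kc}) places $w$ in the scope of that theory. Translating back, $v \le w + 1 \le C + 1$ in $B_{1/2}$, which is the assertion.

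The step I expect to require the most care is the first one: making rigorous that $w = (v-1)^+$ is a \emph{viscosity} subsolution on the full ball $B_1$, including across the free boundary $\partial\{v>1\}$. On the open set $\{v>1\}$ this is immediate from the hypothesis; on the (open) set where $v < 1$ one has $w \equiv 0$ locally and there is nothing to check; the delicate points are those on $\partial\{v>1\} \cap B_1$ where $w(x) = 0$ but $w$ is not locally zero. There one uses that $w \ge 0$ everywhere, so any test polynomial $P$ touching $w$ from above at such $x$ has $P(x) = 0$ and $P \ge w \ge 0$ near $x$; the truncated integral defining $\mathcal L_K w(x)$ then has nonnegative near-field contribution and an $L^1(d\omega)$-controlled far field, and one checks $\mathcal L_K w(x) \ge -C \ge -1$ after the standard truncation argument (as in the proof of Lemma \ref{visc}), possibly at the cost of enlarging the universal constant in the conclusion. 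Alternatively, one can sidestep the pointwise issue by running the De Giorgi / sliding-paraboloid argument for the $L^\infty$ bound directly on $w$, using the equation only at contact points, all of which lie in $\{v>1\}$ where the hypothesis applies — this is the route that dovetails with the methods already cited in the paper.
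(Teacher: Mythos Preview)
Your argument is correct: the truncation $w=(v-1)^+$ is indeed a viscosity subsolution of $\mathcal L_K w\ge -1$ on all of $B_1$ (your case analysis handles the boundary of $\{v>1\}$ correctly, since at a touching point with $w(x)=0$ the test polynomial has a local minimum there and the nonnegative integrand gives $\mathcal L_K w(x)\ge 0$), and then the local maximum principle of Caffarelli--Silvestre \cite{CS1} yields the bound. One minor slip: to place $w$ in the Pucci framework you want $\mathcal L_K w\le \mathcal M^+_{\mathcal L_0}w$, not the $\mathcal M^-$ inequality you wrote, but this does not affect the argument.

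The paper's proof takes a different, more self-contained route. Rather than truncating and invoking \cite{CS1}, it slides the explicit barrier $\psi(x)=(1-|x|^2)^{-n}$ from above and argues by contradiction: at a first contact point $x_0$ one has $v(x_0)\ge\psi(x_0)\ge 1$, so the hypothesis $\mathcal L_K v(x_0)\ge -\delta_0$ is available, while a direct computation using only $\|v^+\|_{L^1(d\omega)}\le\delta_0$ and the growth of $\psi$ near $\partial B_1$ forces $\mathcal L_K v(x_0)\le -c$. This is precisely the ``sliding barrier, use the equation only at contact points in $\{v>1\}$'' strategy you mention as an alternative at the end of your proposal --- the paper just carries it out with a tailored barrier rather than paraboloids. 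The advantage of the paper's approach is that it is elementary and does not appeal to the \cite{CS1} machinery (and, as the paper notes, makes no use of translation invariance of $K$); the advantage of yours is that it cleanly isolates the role of the level-set restriction by absorbing it into the truncation step, after which one is in the standard theory.
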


\begin{proof}
After multiplication with a small constant we may replace 1 by $\delta_0$ in the hypotheses above.  We show that $v \le \psi$ with
$$\psi(x):=(1-|x|^2)^{- n}.$$
Assume by contradiction that when we slide the graph of $\psi$ by above we touch the graph of $v$ at some point $(x_0,v(x_0))$ above the original graph of $\psi$, i.e.
there exists $t>0$ such that $v \le \psi_t$ in $B_1$ and $v(x_0)=\psi_t(x_0)$ for some $x_0$, where $\psi_t:=\psi+t$. 
Denote by $$d:=1-|x_0|,$$ and by $l$ the tangent plane of $\psi_t$ at $x_0$.
Then for $r \le d/2$ we have
$$\int_{B_{r}(x_0)} (v(x)-v(x_0)) K(x-x_0) dx \le  \int_{B_{r}(x_0)} \left(\Lambda (v -l)^+ - \lambda (v-l)^-\right) |x-x_0|^{-n-2s} dx $$
$$ \le  C d^{-n-2}r^{2-2s} - \lambda r^{-n-2s} \int_{B_r(x_0)}(v-l)^- dx.$$
We use $$\int_{B_r(x_0)}(v-l)^- dx \ge \int_{B_r(x_0)}(l-v) dx \ge \psi_t(x_0) |B_r| - \int_{B_1}v^+ dx  $$
$$\ge C d^{-n} r^n - \delta_0,$$
which, by taking $r=dc$ with $ c$ small, and $\delta_0 \ll c$ sufficiently small, we obtain
$$ \int_{B_{r}(x_0)} (v(x)-v(x_0)) K(x-x_0) dx \le - c r^{-n-2s}.$$
On the other hand
$$ \int_{\mathcal C B_{r}(x_0)} (v(x)-v(x_0)) K(x-x_0) dx \le \Lambda \int_{\mathcal C B_{r}(x_0)} v^+(x) |x-x_0|^{-n-2s} dx \le C \delta_0 r^{-n-2s}.$$
From the last two inequalities we find
$$\mathcal L_K v(x_0) \le -c,$$
and we reached a contradiction, provided that $\delta_0$ is chosen sufficiently small.
\end{proof}

We remark that in the proof we did not use the translation invariant properties of $K$, and clearly the proof holds for truncated kernels $ \chi_{B_2}K$ as well.
Also the assumption on the bound for the $L^1$ norm of $v^+$ in $\R^n$ can be weakened to an $L^1$ bound for $v^+$ only on $\mathcal CB_{3/4}$. This can be seen by appropriately modifying the comparison function $\psi$ in the proof.

We provide a version of Harnack inequality that follows from Lemma \ref{hi1}.
\begin{lem}\label{hi2}
Assume that $v \ge 0$ in $B_1$, $v(0) \le 1$, 
$$\mathcal L_K v\le \sigma \quad \mbox{in $B_1$}, \quad \mathcal L_Kv \ge \sigma -1 \quad \mbox{in $\{v > 1 \} \cap B_1$,}$$
for some $\sigma$, and $$\int |v| (\max\{1,|x|\})^{-(n+1+2s)} \,dx \le  1.$$
Then $v \le C$ in $B_{1/2}$ with $C$ independent of $\sigma$.
\end{lem}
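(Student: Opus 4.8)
\textbf{Proof proposal for Lemma \ref{hi2}.} The plan is to deduce this localized Harnack-type bound from Lemma \ref{hi1} by splitting $v$ into a part supported in $B_1$ and a far-away tail, and by truncating the kernel. First I would introduce the truncated kernel $\bar K := \chi_{B_2} K$, which still satisfies \eqref{Kc} on its support; by the remark following the proof of Lemma \ref{hi1}, that lemma applies verbatim with $\bar K$ in place of $K$. Writing $v = v' + v''$ with $v' := \chi_{B_1} v$ and $v'' := \chi_{\mathcal C B_1} v$, the contribution of $v''$ to $\mathcal L_{\bar K} v$ at any point of $B_{1/2}$ is controlled: since $\bar K$ is supported in $B_2$ and $|x-y| \ge 1/2$ whenever $x \in B_{1/2}$, $y \notin B_1$, the integral $\int_{\mathcal C B_1} (v(y)-v(x)) \bar K(y-x)\,dy$ is bounded in absolute value by a universal constant times $\int |v|(\max\{1,|y|\})^{-(n+1+2s)}\,dy \le 1$, together with the $L^\infty$ bound on $v$ that we are in the process of proving (or, more carefully, by first establishing the bound on $\{v>1\}$ before using it). The upshot is that $v'$ is a subsolution of the same type as in Lemma \ref{hi1}, with the constant $1$ replaced by a universal multiple, for the kernel $\bar K$: we have $\mathcal L_{\bar K} v' \ge -C$ in $\{v' > C\} \cap B_{1/2}$, and $\|(v')^+\|_{L^1} \le \|v\|_{L^1(d\omega)} \le C$.

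Next I would run Lemma \ref{hi1} on a slightly smaller ball — rescaling from $B_{1/2}$ to $B_1$ — to conclude $v' \le C$, hence $v \le C$, on $B_{1/4}$, and then a standard covering/rescaling argument upgrades this to $v \le C$ on $B_{1/2}$. The role of the hypothesis $v(0) \le 1$ together with $v \ge 0$ is to normalize: it prevents $v$ from being a large constant, for which the differential inequalities are vacuous; concretely it pins down the $L^\infty$ bound rather than just an oscillation bound. The parameter $\sigma$ drops out because in the set $\{v > 1\}$ we only use $\mathcal L_K v \ge \sigma - 1$, and after subtracting we may as well shift so the effective right-hand side is $-1$; the upper inequality $\mathcal L_K v \le \sigma$ is not actually needed for an upper $L^\infty$ bound and is presumably recorded only for symmetry with the full Harnack statement.

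The main obstacle is the bookkeeping around the discrepancy between $\mathcal L_K$ and $\mathcal L_{\bar K}$ and between $v$ and $v'$: one must verify that the error terms generated by truncating the kernel at $B_2$ and by replacing $v$ with $\chi_{B_1} v$ are genuinely controlled by the weighted $L^1$ hypothesis $\int |v|(\max\{1,|x|\})^{-(n+1+2s)}\,dx \le 1$ — note this is a \emph{stronger} weight than the $d\omega$ weight, which is exactly what is needed to absorb the $|\nabla K|$-type decay in the truncation error — and that these errors do not interact badly with the fact that the inequality $\mathcal L_K v \ge \sigma - 1$ only holds on $\{v > 1\}$. This is a routine but slightly delicate perturbation argument; once it is set up, invoking Lemma \ref{hi1} closes the proof.
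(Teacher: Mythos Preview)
Your proposal has a genuine gap at the step where you handle $\sigma$. You assert that ``the upper inequality $\mathcal L_K v \le \sigma$ is not actually needed'' and that one can ``shift so the effective right-hand side is $-1$.'' This is incorrect: $\sigma$ is a constant on the right-hand side of the equation, not an additive constant on $v$, so there is no shift of $v$ that removes it. If only the subsolution inequality $\mathcal L_K v \ge \sigma -1$ on $\{v>1\}$ were available and $\sigma$ were allowed to be, say, $-10^6$, that inequality carries essentially no information, and the conclusion would fail (take $v$ a tall narrow bump in $B_{1/2}$ with $v(0)\le 1$ and $\int_{B_1}|v|\le 1$). So your reduction to Lemma~\ref{hi1} does not go through as written.

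The paper's proof uses \emph{both} inequalities in tandem, and this is exactly how $\sigma$ disappears. One slides the parabola $-4|x|^2$ from below; because $v\ge 0$ in $B_1$ and $v(0)\le 1$, the contact point $y_0$ lies in $B_{1/2}$ with $v(y_0)\le 1$. At $y_0$ the supersolution condition gives $\mathcal L_K v(y_0)\le \sigma$, while at any $y\in\{v>1\}\cap B_1$ the subsolution condition gives $\mathcal L_K v(y)\ge \sigma-1$. Subtracting cancels $\sigma$: $\mathcal L_K v(y)-\mathcal L_K v(y_0)\ge -1$. One then passes to the truncated kernel $K_T=\chi_{B_2}K$; the discrepancy between $\mathcal L_K$ and $\mathcal L_{K_T}$ at $y$ versus $y_0$ is $\int_{\mathcal C B_2} v(x)\bigl(K(x-y)-K(x-y_0)\bigr)\,dx$ plus bounded terms, and the gradient hypothesis \eqref{Kc1} gives $|K(x-y)-K(x-y_0)|\le C|x|^{-(n+1+2s)}$ for $x\in\mathcal C B_2$. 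This is precisely where the stronger weight in the hypothesis $\int |v|(\max\{1,|x|\})^{-(n+1+2s)}dx\le 1$ is used. Since also $\mathcal L_{K_T} v(y_0)\ge -C$ directly from the touching parabola, one concludes $\mathcal L_{K_T} v(y)\ge -C$ on $\{v>1\}\cap B_1$ with $C$ independent of $\sigma$, and Lemma~\ref{hi1} finishes. The missing idea in your attempt is this anchoring at a reference point $y_0$ to eliminate $\sigma$ by differencing.
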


\begin{proof} Let $K_T= \chi_{B_2} K$ be the truncation of $K$, and
we show that $v$ and $K_T$ satisfy the hypotheses of Lemma \ref{hi1}. 
We slide the parabola $x_{n+1}=-4 |x|^2$ by below till it touches the graph of $v$ at some point $y_0$, 
and from our hypotheses above it follows that  $y_0 \in B_{1/2}$, $v(y_0) \le 1$, and
$$\mathcal L_{K_T} v(y_0) \ge -C.$$
 For $y \in B_1$ we have
$$ \mathcal L_{K} v(y)- \mathcal L_{K} v(y_0) \le  \mathcal L_{K_T} v (y) - \mathcal  L_{K_{T}}v(y_0) + \int_{\mathcal C B_2} v(x) (K(x-y)-K(x-y_0)) dx + C,$$
and from \eqref{Kc1} we have that $$|K(x-y)-K(x-y_0)| \le C |x|^{-(n+1+2s)}  \quad \quad \mbox{if} \quad x \in \mathcal C B_2.$$
Thus
$$\mathcal L_{K_T} v (y) \ge -C \quad \mbox{in $ \{v>1\} \cap B_1$},$$
and the conclusion follows from Lemma \ref{hi1}.
\end{proof}

\begin{rem} \label{r1}We remark that if we slide a parabola $4C |x|^2$ by above and it touches the graph of $v$ at some point $y_1$ for which $\mathcal L_K v(y_1) \ge \sigma -1$ then
by repeating the argument ``upside-down" (i.e. for $-v$) we obtain $ \mathcal L_{K_T} v (y)  \le C $ in $B_1$. 
\end{rem}

We are now ready to prove Theorem \ref{T20}, which is a direct consequence of Propositions \ref{po1} and \ref{po2} below. First we state the necessary Schauder estimates, which will be proved in the appendix.

\begin{prop}[Schauder estimates]\label{SE5}
Let $K$ be a symmetric kernel that satisfies \eqref{Kc}, and assume that $v \in L^1(\R^n,d\omega)$ satisfies
$$\mathcal L _K v=f \quad \mbox{in $B_1$}, \quad \quad \quad \|v\|_{L^\infty(B_1)} \le 1.$$

a) If $\|f\|_{L^\infty (B_1)} \le 1 $, $\|v\|_{L^1(\R^n,d \omega)} \le 1$ then $$\|v\|_{C^\alpha(B_{1/2})} \le C(\alpha), \quad \mbox{for any $\alpha < 2s$.}$$

b) Assume that $K$ satisfies \eqref{Kc1}. If $$ \int_{\mathcal C B_1} v  \, \, |x|^{-(n+2s+1)} \, dx \le 1, \quad [f]_{C^\gamma (B_1)} \le 1, \quad \mbox{ for some $\gamma \in (0,1)$}$$
 then $$\|v\|_{C^\beta(B_{1/2})} \le C(\gamma), \quad \quad \mbox{with $\beta=2s + \gamma,$}$$
provided that $2s+\gamma$ is not an integer.

c) Conversely, if $\|v\|_{L^1(\R^n,d \omega)} \le 1$, $\|v\|_{C^\beta(B_1)} \le 1$ with $\beta$ as above, then $$\|f\|_{C^\gamma(B_{1/2})} \le C.$$
\end{prop}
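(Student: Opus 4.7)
The plan is to run the classical compactness/iteration scheme for elliptic estimates, with the central nonlocal difficulty being that $\mathcal L_K P \not\equiv 0$ for a generic polynomial $P$, so any ``subtract a polynomial and rescale'' step produces extra, rescaling-sensitive terms that must be absorbed.

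For part (a), I would prove the pointwise polynomial approximation: there exist $\rho\in(0,1)$ universal and a sequence of affine functions (or just constants when $\alpha\le 1$) $P_k$ with $|P_k-P_{k-1}|\le C\rho^{(k-1)\alpha}$ and $\sup_{B_{\rho^k}}|v-P_k|\le \rho^{k\alpha}$. The inductive step uses the rescaling
$$\tilde v_k(x)=\rho^{-k\alpha}\bigl(v(\rho^k x)-P_k(\rho^k x)\bigr),$$
which is uniformly bounded in $B_1$, has tails controlled by the hypothesis $\|v\|_{L^1(d\omega)}\le 1$ (the $d\omega$ weight rescales correctly because $K$ does), and solves $\mathcal L_{\tilde K_k}\tilde v_k=\tilde f_k$ with $\tilde K_k$ in the same ellipticity class and $\|\tilde f_k\|_\infty=O(\rho^{k(2s-\alpha)})\to 0$. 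A subsequential limit solves $\mathcal L_{\bar K}\bar v=0$ with universal $L^\infty$ bound, and by the Caffarelli--Silvestre H\"older estimate for translation invariant kernels the limit is $C^{2s-\varepsilon}$ at the origin; this yields a better affine approximation at scale $\rho^{k+1}$ and closes the induction for any $\alpha<2s$.

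For part (b) I would repeat the scheme with polynomial approximants $P_k$ of degree $\lfloor 2s+\gamma\rfloor$, and this is the hard part. Two new phenomena arise. First, when the degree of $P_k$ is $\ge 2$, $\mathcal L_K P_k$ is a nontrivial bounded function that must be folded into the right hand side of the equation satisfied by $v-P_k$; estimating the $C^\gamma$ norm of this correction, and keeping the bookkeeping consistent under the $\rho^k$ dilations, is where the extra hypothesis \eqref{Kc1} is used. Specifically, \eqref{Kc1} lets me compare $\tilde K_k(\cdot+h)$ with $\tilde K_k$ up to an $|h|$-controlled error, which is what makes the incremental quotient of the nonlocal operator well-behaved; it also pairs correctly with the stronger tail weight $|x|^{-(n+2s+1)}$ assumed in (b) so that the rescaled tails remain bounded. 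Second, the compactness limit $\bar v$ now needs $C^{2s+\gamma}$ interior regularity for kernels satisfying \eqref{Kc} and \eqref{Kc1}, which is standard for translation invariant operators once (a) is established (apply (a) to $(v(\cdot+h)-v)/|h|^\alpha$ and iterate). The non-integrality of $2s+\gamma$ is needed so that the polynomial degree $\lfloor 2s+\gamma\rfloor$ is stable across scales and no logarithmic borderline appears.

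Part (c) I would handle directly using the translation invariance: for $x_1,x_2\in B_{1/2}$ with $h=|x_1-x_2|<1/4$,
$$\mathcal L_K v(x_i)=\int_{|z|<1}(v(x_i+z)-v(x_i))K(z)\,dz+\int_{|z|\ge 1}(v(x_i+z)-v(x_i))K(z)\,dz=:A(x_i)+B(x_i).$$
For $|A(x_1)-A(x_2)|$ I subtract at each $x_i$ the Taylor polynomial of $v$ of degree $\lfloor 2s+\gamma\rfloor$; the odd-order terms drop by the symmetry $K(z)=K(-z)$, the even-order terms are estimated against $|z|^{-n-2s}$, and the remainder is $O(|z|^\beta)$, yielding $|A(x_1)-A(x_2)|\le C h^{\beta-2s}=Ch^\gamma$ after handling the overlap on $B_{2h}$ separately with the $C^\beta$ bound on $v$. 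For $|B(x_1)-B(x_2)|$ I write the difference as an integral of $v$ against $K(y-x_1)-K(y-x_2)$ plus lower-order terms; by \eqref{Kc1} the kernel difference is bounded by $Ch|y|^{-n-1-2s}$ on $|y|\gtrsim 1$, and pairing this with the $L^1(d\omega)$ bound on $v$ gives a clean $O(h)$ estimate. Combining these yields $[\mathcal L_K v]_{C^\gamma(B_{1/2})}\le C$, which is the claim.
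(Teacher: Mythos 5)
Your overall strategy --- a compactness-and-iteration scheme at dyadic scales, constants/affines for (a), degree-$\lfloor 2s+\gamma\rfloor$ polynomials for (b), and a direct Taylor argument for (c) --- is the same as the paper's. Part (c) matches what the paper simply calls ``standard''; part (a) is essentially the paper's argument, with your invocation of a H\"older estimate on the compactness limit playing the role of the paper's explicit Liouville theorem for global solutions with polynomial growth (these are interchangeable here, since the rescaled limit is global and the difference-quotient bootstrap underlying the $C^{2s-\varepsilon}$ claim is precisely the Liouville proof).

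In part (b) there is a genuine gap. You assert that when $\deg P_k \ge 2$ the quantity $\mathcal L_K P_k$ ``is a nontrivial bounded function.'' It is not even well-defined: for $P(y)=\tfrac12 y^T A y$ one has
$$\int_{\R^n} y^T A y\, K(y)\, dy \ \sim\ \int_1^\infty r^{1-2s}\,dr \ =\ +\infty \quad\text{for every } s\in(0,1),$$
so the quadratic part of the approximant produces a divergent tail integral. The paper avoids this by replacing $p_k$ with $p_k\psi$, where $\psi$ is a fixed cutoff equal to $1$ on $B_{1/2}$ and supported in $B_1$; then $\mathcal L_K(p_k\psi)$ is a well-defined Lipschitz function with a bound uniform in $k$, because the induction keeps the coefficients of $p_k$ uniformly bounded. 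A second omission: after truncating the kernel and rescaling, the hypotheses only give a $C^\gamma$ \emph{seminorm} bound on the right-hand side of the equation satisfied by the rescaled error $\tilde v$, whereas the interior $C^{1+\alpha}$ estimate you want to apply to $\tilde v$ needs a full $C^\gamma$ \emph{norm}. The paper supplies the missing $L^\infty$ bound by a barrier step, sliding paraboloids of fixed opening above and below $\tilde v$ to pin down the size of the right-hand side at the touching points. Without the cutoff replacement of $p_k$ and without this $L^\infty$ step, the inductive closure in (b) does not go through as written.
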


Proposition \ref{SE5} can be easily deduced from the results of Serra in \cite{Ser} where he obtained Schauder estimates for concave integro-differential equations with rough kernels (see also \cite{K},\cite{CS2}). We will sketch the proof in the Appendix, since its statement is slightly different than it usually appears in the literature and our setting is simpler than in \cite{Ser}.

Next, we prove the statement in Theorem \ref{T20}, valid for all $s \in (0,1),$ that is the following proposition.

\begin{prop}\label{po1}
Let $u$ satisfy \eqref{opti1}, \eqref{opti2} and assume that
$$\|u\|_{L^1(\R^n, d \omega)}, \|\varphi\|_{C^\beta(B_1)}, \|f\|_{C^{0,1}(B_1)} \le 1.$$
Then $u \in C^\alpha(B_1)$ for $\alpha = \min \{\beta, 1+ \eps_0 \}$ and $\|u\|_ {C^\alpha(B_{1/2})} \le C.$
\end{prop}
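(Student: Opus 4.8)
The plan is to prove the estimate $\|u\|_{C^\alpha(B_{1/2})}\le C$ by an improvement-of-flatness / dyadic iteration scheme, exactly in the spirit of Proposition \ref{l21}. The basic dichotomy is: at a point $x_0\in B_{1/2}$, either $x_0$ lies in the non-contact set $\{u>\varphi\}$, where $u$ solves the equation $\mathcal L_K u=f$ exactly and the Schauder estimates of Proposition \ref{SE5} apply, or $x_0$ is in (or near) the contact set $\{u=\varphi\}$, where $u$ is squeezed between $\varphi$ and a supersolution. In the latter regime the one-sided information $\mathcal L_K u\le f$ together with $u\ge\varphi$ and $\varphi\in C^\beta$ should be enough to control the oscillation of $u-\varphi$. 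So I would set up the statement: I claim that for every $x_0\in B_{1/2}$ there is, for each dyadic radius $r=2^{-l}$, an affine function $\ell_r$ (constant if $\alpha<1$, affine if $\alpha>1$; for $\alpha<1$ I can just take $\ell_r\equiv u(x_0)$ or a suitable constant) such that
\begin{equation*}
|u-\ell_r|\le r^{\alpha}\quad\text{in }B_r(x_0),\qquad |\ell_{r/2}-\ell_r|\le C r^{\alpha}.
\end{equation*}
Summing the telescoping series then yields the pointwise-$C^\alpha$ bound, which upgrades to the norm bound since $x_0$ was arbitrary.

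The inductive step is proved by compactness/contradiction, just as in Proposition \ref{l21}. Assume the flatness holds down to scale $r_k$ and rescale: $\tilde u(x)=r_k^{-\alpha}(u-\ell_{r_k})(x_0+r_k x)$, $\tilde\varphi$ the analogous rescaling of $\varphi-\ell_{r_k}$. Then $\|\tilde u\|_{L^\infty(B_1)}\le 1$, $|\tilde u(x)|\le C_0|x|^\alpha$ outside $B_1$, $\tilde u\ge\tilde\varphi$, and the rescaled obstacle has small $C^\beta$-norm near $x_0$'s scale: since $\alpha\le\beta$, $\|\tilde\varphi-\tilde\varphi(0)\|_{C^\beta(B_1)}\le r_k^{\beta-\alpha}[\varphi]_{C^\beta}\to 0$, and likewise the rescaled right-hand side $\tilde f(x)=r_k^{2s-\alpha}f(x_0+r_k x)$ is small in $L^\infty$ (using $\alpha\le 1+\eps_0<2s+1$, and — crucially — that we only need $\alpha\le 2s$ here, which forces the restriction $\alpha=\min\{\beta,1+\eps_0\}$ to be compared against $2s$; in the regime $s>1/2$ one can push $\alpha$ up to $2s+\eps_0$, which is the improvement stated in Theorem \ref{T20}, handled in Proposition \ref{po2}). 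The function $\tilde u$ is a global subsolution and is a solution in $\{\tilde u>\tilde\varphi\}$, with uniformly bounded ingredients; applying Lemma \ref{hi2} and Remark \ref{r1} (slide parabolas from above and below, use the equation only at touching points — these lie in the non-contact set or give the one-sided inequality) we get a uniform Hölder modulus for $\tilde u$ on $B_{3/4}$. Hence along a subsequence $\tilde u\to\bar u$ locally uniformly, $\tilde\varphi\to$ an affine function (which after subtracting we may take $\equiv 0$), $\bar u\ge 0$, and $\bar u$ solves $\mathcal L_{\bar K}\bar u=0$ in $B_{3/4}$ in the viscosity sense on $\{\bar u>0\}$ while $\mathcal L_{\bar K}\bar u\le 0$ everywhere. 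The standard obstacle-problem argument for a $C^2$ (here, flat/affine) obstacle then gives that $\bar u$ is $C^{1,\eps_0}$ at the origin for some universal $\eps_0$: on the contact set $\bar u\ge 0$ with zero obstacle, $\bar u$ is trapped between $0$ and a supersolution, so $\bar u=o(|x|)$ there, while on the non-contact side the Schauder estimate of Proposition \ref{SE5}(b) gives $C^{2s+\gamma}$, hence $C^{1,\eps_0}$, regularity; combining yields a quadratic-type (actually $C^{1,\eps_0}$) bound $|\bar u-\bar\ell|\le C|x|^{1+\eps_0}$ near $0$. Choosing the dyadic contraction factor $\rho=2^{-m_0}$ so that $C\rho^{1+\eps_0}\le\tfrac12\rho^{\alpha}$ (possible since $\alpha\le 1+\eps_0$) closes the induction, and the existence of the threshold scale $k_0$ is exactly the place where the compactness produces the needed $\eps$ of room.

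The main obstacle, I expect, is handling the contact set correctly: near points of $\{u=\varphi\}$ we do \emph{not} have an equation for $u$, only $\mathcal L_K u\le f$ from above and $u\ge\varphi$ from below, and $\varphi$ is merely $C^\beta$ rather than $C^2$. The way around this — and the reason the scheme works — is that the \emph{obstacle} enters only after rescaling, where its oscillation is $O(r_k^{\beta-\alpha})$ and hence negligible in the limit because $\alpha\le\beta$; in the limit problem the obstacle has flattened to an affine function, and the classical regularity theory for the obstacle problem with a smooth obstacle takes over. One must be careful that the subsolution inequality and the touching-point arguments of Lemma \ref{hi2} / Remark \ref{r1} survive the rescaling with the extremal Pucci operators of order $2s$ and pass to the limit (the kernels $\tilde K$ converge weakly to some $\bar K$ in the class \eqref{Kc}), and that the tail bound $|\tilde u|\le C_0|x|^\alpha$ outside $B_1$ is preserved along the iteration — both are routine given the machinery already set up in Section 4 and in \cite{CS1}, but they are the technical heart. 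The case split $\alpha<1$ versus $\alpha>1$ only affects whether $\ell_r$ is constant or affine and is otherwise cosmetic; when $\beta\le 1+\eps_0$ the bound $\alpha=\beta$ is optimal (the obstacle itself is only that regular), and when $\beta>1+\eps_0$ the free-boundary obstruction caps the regularity at $1+\eps_0$, which is the content of the $\min$ in the statement.
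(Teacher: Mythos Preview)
Your compactness/blow-up scheme has a genuine circularity. After rescaling and passing to the limit you arrive at the \emph{zero-obstacle problem} for the limiting operator $\mathcal L_{\bar K}$: $\bar u\ge 0$, $\mathcal L_{\bar K}\bar u\le 0$, and $\mathcal L_{\bar K}\bar u=0$ on $\{\bar u>0\}$. You then invoke ``the standard obstacle-problem argument for a $C^2$ obstacle'' to conclude $\bar u\in C^{1,\eps_0}$ at the origin. But for kernels satisfying only \eqref{Kc}--\eqref{Kc1} no such result is available in the paper or its references (Silvestre's and Caffarelli--Salsa--Silvestre's results are for the fractional Laplacian only); Proposition~\ref{po1} \emph{is} that result, even in the special case $\varphi\equiv 0$. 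Your elaboration ``on the contact set $\bar u=o(|x|)$, on the non-contact side Schauder gives $C^{2s+\gamma}$, combining yields\dots'' does not close the gap: Proposition~\ref{SE5} applies only on balls contained in $\{\bar u>0\}$ and degenerates at the free boundary, and gluing the two regimes across $\partial\{\bar u>0\}$ is precisely the obstacle-problem regularity you are trying to prove.

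There is a second, independent issue with the scaling of the right-hand side. You write $\tilde f(x)=r^{2s-\alpha}f(x_0+rx)$, which stays bounded only when $\alpha<2s$; for $s\le\tfrac12$ this caps your iteration at $C^\alpha$ with $\alpha<2s\le 1$, strictly short of the $C^{1+\eps_0}$ claimed. You flag this (``crucially \dots $\alpha\le 2s$ here'') but do not resolve it. The paper avoids both obstructions by working directly with \emph{difference quotients} rather than compactness: one shows that $u_h^e(x)=h^{-\alpha}(u(x+he)-u(x))$ satisfies $\mathcal L_K u_h^e\ge f_h^e\ge -\delta_0$ on the set $\{u_h^e>\tfrac12 r^{\eps_0}\}$, the key observation being that at points with $x+he\in\{u=\varphi\}$ one has $u_h^e\le\varphi_h^e\le\delta_0 h^{\beta-\alpha}$, hence small. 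Then a parabola-sliding argument (not compactness) gives oscillation decay for $u_h^e$, using a cylinder-integration trick to show $|\{v<1-c\}\cap B_1|\ge c$ from the bound $\tilde u\ge -\delta_0$. This uses only $[f]_{C^{0,1}}$ (so the RHS of the difference-quotient equation is bounded regardless of $\alpha$) and appeals to no pre-existing flat-obstacle result. Note also that Lemma~\ref{hi2} and Remark~\ref{r1} give $L^\infty$ bounds, not the uniform H\"older modulus you would need for compactness; that modulus is the content of the paper's Step~1, itself a direct parabola-sliding argument.
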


\begin{proof}
We sketch the proof below. In view of Lemma \ref{hi1}, we can assume without loss of generality that $\|u\|_{L^\infty} \leq 1$ in $B_1.$ In fact, 
after multiplication with a small constant, we may assume that all the norms in our assumptions and $\|u\|_{L^\infty}$ are bounded by $\delta_0$, sufficiently small to be made precise later.

\smallskip

{\it Step 1:} We show that $u \in C^{\alpha_0}$ for a small $\alpha_0>0$, 
by checking that the usual proof for H\"older continuity of solutions to nonlocal equations \cite{S2} still applies in our case. 
Let us assume for simplicity that $0 \in \{u=\varphi\}$, $u(0)=0$ and suppose that 
\begin{equation}\label{i1}
u \le r^{\alpha_0}=(1-\delta)^l \quad \mbox{in $B_r$, with} \quad r=2^{-l}, \quad \mbox{ for all $l \le k$},
\end{equation}
 for some $k \ge k_0$. Then we need to show that \eqref{i1} holds for $l=k+1$ as well.

Indeed, the rescaling $\tilde u(x):=r^{-\alpha_0}u(rx)$ with $r=2^{-k}$ satisfies in $B_1$ ($\alpha_0 \leq \beta$)
$$-\delta_0 \le \tilde u \le 1, \quad \mathcal L_{\tilde K} \tilde u \le \delta_0, \quad \quad \mathcal L_{\tilde K} \tilde u \ge -\delta_0 \mbox{ in $\{\tilde u > \delta_0\}$}.$$
Moreover,
\begin{eqnarray}\label{levelk}
\tilde u \leq (1-\delta)^j, \quad \text{in $B_{2^j}, \quad j=1,\ldots, k,$}\\
\int_{\R^n \setminus B_{2^k}} \tilde ud\omega \leq (2^{-k})^{2s-\alpha_0}\delta_0. 
\end{eqnarray}
In order to obtain the diminish of oscillation of $\tilde u$ 
we compute $\mathcal L_K \tilde u$ at the two contact points $x_0^-$, $x_0^+$ 
obtained by sliding two paraboloids of opening $2\delta$ by below and above till they touch the graph of $\tilde u$. 

Precisely, we slide $P_t:=2\delta|x|^2+t$, $t \leq 1$, from above. If no contact point occurs till $ t=1-\frac 3 2 \delta,$ then $$\tilde u \leq 1-\delta \quad \text{in $B_{1/2}$}$$
and we obtain the desired diminish in oscillation. Let us consider then the case when the contact point $x_0^+$ occurs for $t > 1-3/2\delta$, that is near the top $x_{n+1}=1$. Hence (say $\delta_0<1/4, \delta <1/2$)$$u(x_0^+)> \delta_0 \quad \text{and} \quad \mathcal L_{\tilde K} \tilde u (x_0^+) \geq -\delta_0.$$ 
Assume that 
\begin{equation}\label{measure12}|\{\tilde u > \frac 1 2\} \cap B_1| < \frac 1 2 |B_1|.
\end{equation}
We show that
\begin{equation}\label{boundc}\mathcal L_{\tilde K}\tilde u(x_0^+) \leq -c\end{equation}
for $c$ universal, provided that $\delta$ (hence $\alpha_0$) is small enough. We thus reach a contradiction if $\delta_0$ is small enough.

Indeed, for $\delta$ small,
$$\tilde u \leq P_t - \frac 1 4 \chi_{\{\tilde u \leq \frac 12 \}} \quad \text{in $B_1$}.$$
Hence,
\begin{eqnarray*}
\mathcal L_{\tilde K} \tilde u(x_0^+)  \leq \int_{B_1} (P_t(x)-P_t(x_0^+))\tilde K(x-x_0^+)dx -\frac 1 4\int_{\{\tilde u \leq \frac 1 2\} \cap B_1} \tilde K(x-x_0^+) \\
+ \int_{\R^n \setminus B_1} (\tilde u(x)-\tilde u(x_0^+))\tilde K(x-x_0^+) dx := I_1+I_2+I_3. 
\end{eqnarray*}
We first observe that $x_0^+ \in B_{3/4}$, since $\tilde u\leq 1$ and $t > 1-3/2\delta.$

It is easily seen that
$$I_1 \leq C_1\delta.$$ Moreover, from \eqref{measure12} we have
$$I_2 \leq - c_2.$$ 
Finally, we estimate $I_3$ as follows, and we recall that $k \geq k_0$ large.
$$I_3 \leq \sum_{j=1}^{k} \int_{B_{2^{j}} \setminus B_{2^{j-1}}} (\tilde u(x)-\tilde u(x_0^+))\tilde K(x-x_0^+) dx + \int_{\R^n \setminus B_{2^{k}}} \tilde u d\omega=I_3^1+I_3^2.$$
To estimate $I_3^1$ we use \eqref{levelk} and get
$$I_3^1 \leq C \sum_{j=1}^k ((1-\delta)^{-j}- 1+\frac 3 2 \delta)2^{-2sj} \leq c(\delta) \to 0, \quad \text{as $\delta \to 0.$}$$
Again, to estimate $I_3^2$ we use \eqref{levelk} and obtain
$$I_3^2 \leq (2^{-k})^{2s-\alpha_0} \to 0 \quad \text{for $k_0$ large enough and $\delta$ (hence $\alpha_0$) small.}$$
Combining the estimates above, we obtain the claim in \eqref{boundc} and reach a contradiction.

This implies that either the contact point does not occur near the top, and we are done, or \eqref{measure12} does not hold and
\begin{equation}
|\{\tilde u > \frac 1 2\}\cap B_1| \geq \frac 1 2 |B_1|.
\end{equation}
In this case, we slide $-2\delta|x^2|-t$ by below, $t \geq \delta_0$, and we work with the lower contact point $x_0^-.$ Since $\tilde u(0)=0$ we see that $x_0^-$ occurs close to the bottom $x_{n+1}=-\delta_0.$
With a similar computation as above, we obtain that
$$\mathcal L_{\tilde K} \tilde u(x_0^-) \geq c,$$ with $c$ universal ($\delta$ chosen small).  This contradicts that $\mathcal L_{\tilde K} \tilde u(x_0^-) \leq \delta_0$, if $\delta_0$ is small.
This means that \eqref{measure12} must hold and $x_0^+$ will occur far from the top, providing the diminish in the oscillation.

This establishes a uniform pointwise $C^{\alpha_0}$- Holder continuity of $u$ at all points on the contact set $\{u=\varphi\} \cap B_{1/2}$. 
It is easy to extend this modulus of continuity at all $x \in B_{1/4}$. 
We take the largest ball $B_\rho (x)$ included in $\{u>\varphi\}$ which is tangent to $\{u=\varphi\}$ at some point $y$, 
and then we apply the interior estimates in Proposition \ref{SE5} to $\mathcal L_Ku=f$ in $B_\rho(x)$ by using the modulus of continuity of $u$ at $y$. 

\

{\it Step 2:} We show that if  $u \in C^{\alpha}$ for some $\alpha \leq 1$ then 
$u \in C^{\alpha + \eps_0}$ for some $\eps_0$ universal, as long as $\alpha+\eps_0 \leq \beta$. 
Then we combine this claim and step 1, and obtain the desired conclusion.

The proof is similar to the one in Step 1, and uses the fact that the derivatives of $u$ are ``subsolutions". Let us assume that the norms of the data are bounded by $\delta_0$ and that
$$u(0)=\varphi(0)=0, \quad  \quad \nabla \varphi (0)=0 \quad
\mbox{if $\beta>1$}, \quad \mbox{and} \quad \|u\|_{C^\alpha(B_1)} \le \delta_0.$$
We consider the difference quotients
$$u_h^e(x):=\frac{u(x+he)-u(x)}{h^\alpha} ,$$
where $e$ is a unit vector and prove the following property.

Assume that for some $k \ge k_0$, we have for all $r=2^{-l}$ with $l \le k$ 
\begin{equation}\label{i2}
u_h^e \le r^{\eps_0}=(1-\delta)^l \quad \mbox{in $B_r$, for all $h \le r$, $|e|=1$.}
\end{equation}
Then \eqref{i2} holds for $l=k+1$ as well.

Fix $r=2^{-k}$. The key observation is that
\begin{equation}\label{ko}
\mathcal L_K u_h^e \ge f_h^e \ge -\delta_0 \quad \quad \mbox{in} \quad \{u_h^e>\frac 12 r^{\eps_0} \} \cap B_r.
\end{equation}
Indeed, since $u$ is a solution in the set $\{u>\varphi\}$ and a supersolution in $B_1$, 
we conclude that the only points where the inequality in \eqref{ko} can fail are those with $x+he \in \{u=\varphi\}$. 
At these points
$$u_h^e(x) \le \varphi_h^e(x) \le \quad \delta_0 h^{\beta-\alpha}
 \quad \mbox{(or} \quad \delta_0 r^{\beta-1} h^{1-\alpha}\quad 
\mbox{if $\beta>1$)} \quad \le \frac 12 r^{\eps_0}.$$

Moreover, call $K_T=\chi_{B_{1/4}}K$, then for a universal $c>0,$
\begin{equation}\label{koT}
\mathcal L_{K_T} u_h^e \ge-c \quad \quad \mbox{in} \quad \{u_h^e>\frac 12 r^{\eps_0} \} \cap B_r.
\end{equation}
Indeed for $x$ in such set $u_h^e(x)>0$ and we have,
$$\mathcal L_{K_T} u_h^e \ge -\delta_0 - \int_{\mathcal C B_{1/4}(x)} u_h^e(y)K(y-x)dy.$$
Call the second term $E.$ Then, one easily sees that
$$|E| \leq \frac{1}{h^\alpha} (E_1+E_2+E_3),$$
with
\begin{align*}
& E_1:= \int_{A_1} |u(x+z)||K(z)-K(z-he)| dz, \quad A_1=\mathcal C (B_{1/4} \cup B_{1/4}(he));\\
&E_2:= \int_{A_2} |u(x+z)|K(z-he)dz, \quad A_2:=B_{1/4} \setminus B_{1/4}(he);\\
&E_3:=\int_{A_3} |u(x+z)|K(z)dz, \quad A_3:=B_{1/4}(he) \setminus B_{1/4}.
\end{align*}
Since $h \leq r=2^{-k}$ with $k$ large, and $u$ is bounded in $B_1$, then $E_2,E_3 \leq C h.$ To bound $E_1$ we use that $\|u\|_{L^1(d\omega)} \leq \delta_0$ and assumption \eqref{Kc1}. We thus obtain $E_3\leq Ch$ as well and by collecting all these bounds we obtain the desired claim.

Now, let $$\tilde u(x):=r^{-(\alpha+\eps_0)}u(rx),$$ be the rescaling of $u$ and notice that from $u \ge \varphi$ and \eqref{i2} applied with $x=0$, $he=r y$, $y\in B_1$ we find 
\begin{equation}\label{tui}
-\delta_0 \le \tilde u(y) \le |y|^\alpha \quad \mbox{in $B_1$}.
\end{equation}

Let $h \le r/2$, and write $h=r \tilde h$, with $\tilde h \le 1/2$. Then
$$v(x):=\tilde u_{\tilde h}^e(x)=r^{-\eps_0}u_h^e(rx),$$
is the rescaling of $u_h^e$ from $B_r$ to the unit ball, and from \eqref{i2},\eqref{koT} in $B_1$ we obtain that in $B_1$
$$-2 \le v \le 1, \quad \quad \mathcal L_{\tilde K_T}  v \ge -\delta_0 \mbox{ in $\{ v > \frac 12\}$},$$
where the lower bound on $v$ follows from \eqref{i2} applied for $-e$. Here
$$\tilde K_T=\chi_{B_{1/4r}}K.$$

Now we claim that $|\{v<1-c\} \cap B_1| \ge c$ for some fixed $c$ small universal.
The reason is that if $v$ is close to $1$ in almost all $B_1$ then we contradict that $\tilde u \ge -\delta_0$. Indeed, assume for simplicity that $e=e_n$ and we integrate $v$ in the cylinder
$$\mathcal C: = \left \{|x'| \le \frac 18, \quad x_n \in [-\frac 34, \frac 14] \right\} .$$ 
For each segment in the $e_n$ direction $l_{x'}=\{(x',x_n)| x_n \in [-\frac 34,\frac 14]\}$ of length 1 included in $\mathcal C$ we have (see \eqref{tui})
$$\int_{l_{x'}}v \, dx_n = \tilde h^{-\alpha} \left(\int_{\frac 14}^{\frac 14 + \tilde h} \tilde u \, \, dx_n- \int_{- \frac 34}^{- \frac 3 4+\tilde h} \tilde u \, \, dx_n \right) $$
$$\le \tilde h^{1-\alpha} \left( (\frac 78)^\alpha  + \delta_0  \right) \le 1-c,  $$
and our claim follows.

Now the proof of diminish of oscillation for $v$ follows as in Step 1. We remark that in bounding $\mathcal L_{\tilde K_T}\tilde v$ at the contact point, we will not have a term as $I_3^2$, since the kernel $\tilde K$ is truncated. All the other terms can be bounded with similar arguments as above.

In conclusion property \eqref{i2} is proved and this implies that $u \le r^{\alpha + \eps_0}$ in $B_r$ for all dyadic balls, thus $u$ is pointwise $C^{\alpha+\eps_0}$ at $0$. 
Now we can extend as above the pointwise regularity from the set $\{u=\varphi\}$ to the whole $B_{1/4}$, and obtain the desired conclusion.
\end{proof}

We show that when $s> \frac 12$, then the result of Proposition \ref{po1} can be improved.
\begin{prop}\label{po2}
Let $u$ satisfy \eqref{opti1}, \eqref{opti2} and assume $s>1/2,$
$$\|u\|_{L^1(\R^n, d \omega)}, \|\varphi\|_{C^\beta(B_1)}, \| f \|_{C^{\eps_0}(B_1)} \le 1,$$ for some $\beta \ne 2s$. 
Then $u \in C^\alpha(B_1)$ for $\alpha = \min \{\beta, 2s+ \eps_0 \}$ with
 $$\|u\|_ {C^\alpha(B_{1/2})} \le C.$$
\end{prop}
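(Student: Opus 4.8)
The plan is to bootstrap the regularity of $u$ in increments of a fixed universal size $\eps_0$, starting from the $C^{1+\eps_0}$ estimate already provided by Proposition \ref{po1}, and exploiting throughout that $s>\tfrac12$, i.e. that the order of $\mathcal L_K$ exceeds $1$.

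The first move is a reduction to zero right–hand side. Solve the Dirichlet problem $\mathcal L_K h=f$ in $B_1$ with $h=0$ outside $B_1$; by the Schauder estimate (Proposition \ref{SE5}(b), applicable since $f\in C^{\eps_0}$) one gets $\|h\|_{C^{2s+\eps_0}(B_{3/4})}\le C$, choosing $\eps_0$ so that $2s+\eps_0$ is not an integer. Setting $U:=u-h$, $\Phi:=\varphi-h$, the pair satisfies $U\ge\Phi$ in $B_1$, $\mathcal L_K U\le 0$ in $B_1$ and $\mathcal L_K U=0$ on $\{U>\Phi\}$, with $\|\Phi\|_{C^{\min\{\beta,2s+\eps_0\}}(B_{3/4})}\le C$. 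This is also where the exponent in the statement comes from: since $u=U+h$ and $h$ is in general no smoother than $C^{2s+\eps_0}$, the final regularity cannot exceed $C^{\min\{\beta,2s+\eps_0\}}$. As in Proposition \ref{po1}, it then suffices to establish a pointwise $C^\alpha$ bound, $\alpha=\min\{\beta,2s+\eps_0\}$, at every point $x_0\in\partial\{U>\Phi\}\cap B_{1/2}$, with a universal constant: off the contact set $U$ solves $\mathcal L_K U=0$, and the interior estimate of Proposition \ref{SE5} together with the largest–interior–ball (tangent ball) trick used at the end of Proposition \ref{po1} propagates such a modulus to all of $B_{1/4}$.

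The heart of the matter is one gain of $\eps_0$: if $U$ is, uniformly over free boundary points, pointwise $C^\alpha$ for some $1<\alpha\le 2s$ with $\alpha+\eps_0\le\min\{\beta,2s+\eps_0\}$, then it is pointwise $C^{\alpha+\eps_0}$. Fix $x_0$ on the free boundary; since $U\ge\Phi$ and both are $C^1$ at $x_0$, they share a common tangent affine function $\ell$, which we subtract from both. Here $s>\tfrac12$ is essential: the integral defining $\mathcal L_K\ell$ is absolutely convergent and vanishes by symmetry, so after the dyadic rescaling $\tilde U(x)=r^{-\alpha}(U-\ell)(x_0+rx)$ one still has $\mathcal L_{\tilde K}\tilde U=0$ off the rescaled contact set and $\mathcal L_{\tilde K}\tilde U\le 0$ everywhere, with $|\tilde U|\le 1$ in $B_1$, at most $|x|^\alpha$ growth at the previously treated larger scales, and a rescaled obstacle that is $r^{\beta-\alpha}$–small in $C^0$ because $\beta>\alpha$. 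One then runs the sliding–paraboloid argument of Step 1 of Proposition \ref{po1} at the level of these affine–reduced (second–order) increments — sliding paraboloids of opening $\sim\delta$ from above and below against the graph of $\tilde U$ and evaluating $\mathcal L_{\tilde K}$ at the touching points, using the equation at non–contact points and the negligible obstacle at contact points — to get a geometric diminish of oscillation, $|\tilde U-m|\le\rho^\alpha$ in $B_\rho$ with $\rho$ and $\|m\|$ universal (alternatively this step can be done by the compactness argument used for Proposition \ref{l21}). Iterating yields the pointwise $C^{\alpha+\eps_0}$ bound at $x_0$. Starting from $\alpha=1+\eps_0$ and applying this implication repeatedly reaches some exponent $\alpha_\ast\le 2s$ with $\alpha_\ast+\eps_0>2s$; one last application — still legitimate since at $\alpha=\alpha_\ast\le 2s$ all rescaled data stay bounded — lands in $(2s,2s+\eps_0]$, which combined with the ceiling from the first step gives $\alpha=\min\{\beta,2s+\eps_0\}$, and Step~1 then upgrades this to $\|u\|_{C^\alpha(B_{1/2})}\le C$.

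The main obstacle is the gain–of–$\eps_0$ step: carrying out the diminish of oscillation for the affine–reduced increments when $1<\alpha\le 2s$ while keeping simultaneously under control the contribution of the contact points, which feeds back the modulus of the obstacle and is harmless only because $\beta>\alpha$ and $2s>1$, and the nonlocal tails of the rescalings, which are bounded only thanks to the $|x|^\alpha$ control inherited from the previous scales. The reduction to $f\equiv 0$ in the first step is what both removes the otherwise problematic rescaled right–hand side and fixes the ceiling at exactly $2s+\eps_0$.
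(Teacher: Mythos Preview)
Your approach is genuinely different from the paper's, and the core ``gain-of-$\eps_0$'' step has a real gap exactly at the place that matters.

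The paper does \emph{not} subtract a Dirichlet corrector or an affine tangent, and it does not bootstrap in increments of $\eps_0$. It works directly with $u$ (normalizing $\varphi(0)=0$, $\nabla\varphi(0)=0$) and proves in one shot, by induction on scales, the weighted $L^1$ control
\[
\int |u|\,\bigl(\max\{r,|x|\}\bigr)^{-(n+1+2s)}\,dx \le r^{\eps_0-1},
\]
which immediately yields $|u|\le C|x|^{2s+\eps_0}$. The extra factor $|x|^{-1}$ in the weight (available because of the gradient bound \eqref{Kc1} on $K$, via Lemma \ref{hi2}) is precisely what keeps the tails integrable when the target exponent exceeds $2s$. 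The inductive step is a dichotomy: either the rescaling $\tilde u$ is uniformly small in $B_{1/4}$, or sliding a fixed paraboloid by above produces a contact point off the coincidence set, which forces $\mathcal L_{\tilde K_T}\tilde u(0)\le C$; since $\tilde\varphi$ touches $\tilde u$ from below at $0$, this gives $\int_{B_1}|\tilde u|\,|x|^{-n-2s}\,dx\le C$, and an integration-in-$\rho$ argument then rules out failure of the inductive bound at all intermediate radii.

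In your scheme the diminish-of-oscillation (or compactness) argument for $\tilde U(x)=r^{-(\alpha+\eps_0)}(U-\ell)(x_0+rx)$ requires control of the nonlocal tail. From the inductive hypothesis at larger dyadic scales one only gets $|\tilde U(x)|\lesssim |x|^{\alpha+\eps_0}$ for $1\le |x|\le r^{-1}$, so in the sliding-paraboloid computation the analogue of $I_3^1$ behaves like $\sum_{j\ge 1} 2^{j(\alpha+\eps_0-2s)}$, and the far tail $I_3^2$ (the contribution from outside the original $B_1$, where you only have $U\in L^1(d\omega)$ and an affine $\ell$) contributes $\sim r^{\,2s-\alpha-\eps_0}$. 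Both diverge as soon as $\alpha+\eps_0>2s$, i.e.\ exactly at the ``one last application'' you invoke to pass from $\alpha_\ast\le 2s$ to $(2s,2s+\eps_0]$. The sentence ``still legitimate since at $\alpha=\alpha_\ast\le 2s$ all rescaled data stay bounded'' is the problematic one: the rescaled \emph{equation} is fine, but the rescaled \emph{tails} are not. Working with a truncated kernel as in Step~2 of Proposition \ref{po1} does not save this, because the truncation-error estimate there relies on the discrete-difference structure of $u_h^e$, which $U-\ell$ does not have. To cross the $2s$ threshold you need, one way or another, the extra decay $|x|^{-(n+1+2s)}$ that the paper builds into its inductive quantity.
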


\begin{proof} Assume that $\| u \|_{L^1(d \omega)}$, $\|\varphi\|_{C^\beta}$, $ \|f\|_{C^{\eps_0}} $ are all smaller than $\delta_0$, 
and assume also that $u(0)= \varphi(0)=0$, and $\nabla \varphi(0)=0$ if $\beta>1$. We treat the case when $\beta \ge 2s + \eps_0$.

We prove by induction that there exists a sequence of radii $1=r_1>r_2 >....$ with $r_{k+1}/r_k \in[\rho_0,1/2)$ for some fixed $\rho_0$ such that
\begin{equation}\label{ihpo2}
\int  |u| (\max \{  r,|x|\})^{-(n+1+2s)} dx \le r^{\eps_0-1} .
\end{equation}
Assume that this holds for some $r=r_k$. We let $$\tilde u(x)= r^{-2s-\eps_0} u(rx), \quad \tilde \varphi(x)= r^{-2s-\eps_0} \varphi(rx), \quad \tilde f(x)=r^{-\eps_0}f(rx),$$ and we have
$$\mathcal L_{\tilde K} \tilde u \le \tilde f  \quad \mbox{in $B_1$}, \quad \quad  
\mathcal L_{\tilde K} \tilde u = \tilde f \quad \mbox{in $\{\tilde u > \tilde \varphi  \} \cap B_1$},$$
and $$ osc_{B_1} \, \,  \tilde f \le \delta_0, \quad |\tilde \varphi(x)| \le \delta_0 |x|^{2s+\eps_0}  \quad \quad \mbox{in $B_1$}.$$
Moreover, \eqref{ihpo2} is equivalent to
\begin{equation}\label{eq20}
\int  |\tilde u| (\max \{  1,|x|\})^{-(n+1+2s)} dx \le 1.
\end{equation}
We want to show that there exists $\rho \in [\rho_0,\frac 12)$ such that
\begin{equation}\label{21}
\int  |\tilde u| (\max \{  \rho,|x|\})^{-(n+1+2s)} dx \le \rho^{\eps_0-1},
\end{equation}
and then the induction hypothesis \eqref{ihpo2} is satisfied for $r_{k+1}=\rho r_k$.

Notice that $\tilde u + \delta_0$ satisfies the hypotheses of the Lemma \ref{hi2} hence $\tilde u \le C$ in $B_{1/2}$. Now we distinguish two cases.

{\it Case 1:} $\tilde u \le \delta_0$ in $B_{1/4}$. Then \eqref{21} is satisfied clearly satisfied for $\rho=\rho_0$ small, provided that $\delta_0 \ll \rho_0$ is chosen sufficiently small.

{\it Case 2:} $\tilde u > \delta_0$ for some point in $B_{1/4}$. The according to Remark \ref{r1} we can slide a parabola of fixed opening by above and obtain a contact point in $\{\tilde u > \delta_0 > \tilde \varphi\}$ thus  $$\mathcal L_{\tilde K_T} \tilde u(0) \le C.$$  
Since $\tilde \varphi$ is tangent by below to $\tilde u$ at $0$ the above inequality implies
\begin{equation}\label{eq22}
\int_{B_1} |\tilde u| |x|^{-n-2s} dx \le C.
\end{equation}
On the other hand, if we assume by contradiction that \eqref{21} holds in the opposite direction for all $\rho \in (\rho_0,1/2)$ then we can integrate this inequality in $\rho$ and obtain
$$\int  |\tilde u| (\min \{ 1,|x|\}) |x|^{-(n+1+2s)} dx \ge \eta(\rho_0,\eps_0),$$
with $\eta(\rho_0,\eps_0) \to \infty$ as $\rho_0, \eps_0 \to 0$. This contradicts \eqref{eq20}, \eqref{eq22} by choosing $\eps_0$, $\rho_0$ sufficiently small.

In conclusion property \eqref{ihpo2} is proved, and from the argument above we obtain $u(x) \le C |x|^{2s+\eps_0}$ in $B_1$. This means that $u$ is pointwise $C^{2s+\eps_0}$ in the set $\{u= \varphi\}$, and this can be extended to the whole $B_{1/2}$ as before.

When $\beta \in (2s,2s+\eps_0)$ the argument above applies with $\eps_0$ replaced by $\beta-2s$. 

Finally, when $\beta<2s$ the proof is simpler. The rescaling 
$\tilde u(x)=r^{-\beta} u(rx)$ satisfies $\|\tilde u\|_{L^1(\R^n, d\omega)} \le C$, (since now $\tilde \varphi$ is integrable at infinity) and we can apply Lemma \ref{hi1} directly to obtain the pointwise $C^\beta$ estimate at the origin. In this case we only require $f \in L^\infty$. 
\end{proof}

\section{The case of the fractional Laplacian: free boundary regularity}

In the special case when 
$$K_1(y)=\frac{1}{|y|^{n+2s_1}}$$
the operator $\mathcal L_{K_1}$ 
is the fractional Laplacian $\Delta^{s_1}$ and we obtain the optimal regularity of the 
minimizing pair in the two membranes problem, see Theorem \ref{T3}. This improvement is due to 
the fact that the optimal $C^{1,s}$ regularity in the obstacle problem for the fractional Laplacian is 
known. Precisely, 
assume that $u$ is a solution of the thin obstacle problem in $B_1$ with obstacle $\varphi$ by below, that is 
$u$, $\varphi$ are continuous in $B_1$, $u \in L^1(\R^n, d \omega)$, 
and
\begin{equation}\label{opti3}
u \ge \varphi \quad \mbox{in $B_1$} ,
\end{equation}
\begin{equation}\label{opti4}
\mathcal \triangle^s u \le f \quad \mbox{in $B_1$}, \quad \mbox{and} \quad \quad \quad \mathcal \triangle^s u = f \quad \mbox{in} \quad \{u>\varphi\} \cap B_1.  
\end{equation}

The following result holds (see Section 1 for the notion of regular points.)

\begin{thm}[Optimal regularity]\label{T31}
Let $u$ be a solution to \eqref{opti3}, \eqref{opti4}, with
$$\|u\|_{L^1(\R^n, d \omega)}, \|\varphi\|_{C^\beta(B_1)}, \|f\|_{C^{\beta-2s}(B_1)} \le 1, \quad \quad \mbox{for some $\beta > 1 + s$.}$$
Then $u \in C^{1+s}(B_1)$ and
 $$\|u\|_ {C^{1+s}(B_{1/2})} \le C.$$
 Moreover, the free boundary $\Gamma:=\p \{u=\varphi  \}$ is a $C^{1,\gamma}$ surface in a 
 neighborhood of each of its regular points. The constants $C, \gamma$ 
 depend on $n$, $s$, and $\beta$.
 
\end{thm}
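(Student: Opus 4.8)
The plan is to reduce the problem to the classical thin obstacle (Signorini) problem, for which an Almgren-type monotonicity formula is available, and to verify that the reduced frequency function remains monotone under the weaker hypothesis $\beta > 1+s$ rather than $\beta = 3$. First I would subtract the obstacle: set $w := u - \varphi$, so that $w \ge 0$ in $B_1$ and $\triangle^s w = f - \triangle^s\varphi =: g$ in $\{w>0\}\cap B_1$, while $\triangle^s w \le g$ in $B_1$. By Proposition \ref{SE5} (the Schauder estimates for $\triangle^s\varphi$, using $\varphi \in C^\beta$) together with $f \in C^{\beta-2s}$, the right-hand side $g$ belongs to $C^{\beta-2s}(B_1)$ with $\beta - 2s > 1-s > 0$; in particular $g$ is Hölder continuous. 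Next I would use the Caffarelli–Silvestre extension: write $w$ as the trace on $\R^n \times\{0\}$ of a function $W(x,y)$ on $\R^{n+1}_+$ satisfying $\mathrm{div}(y^{1-2s}\nabla W)=0$ away from the contact set, with $\lim_{y\to 0^+} y^{1-2s} W_y = c_{n,s}\,\triangle^s w$ on $\{w>0\}$, so that $W$ solves a (thin) obstacle problem for the degenerate operator $L_a := \mathrm{div}(y^{a}\nabla\cdot)$, $a=1-2s$, with an inhomogeneous term coming from $g$.

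With $W$ in hand, the heart of the argument is the frequency function: for a regular free boundary point $x_0$, normalized to the origin, one forms
$$
N(r) := \frac{r\,\displaystyle\int_{\partial B_r^+} y^{a}|\nabla W|^2}{\displaystyle\int_{\partial B_r^+} y^{a} W^2},
$$
(with the appropriate correction terms coming from $g$), and shows $r\mapsto N(r)$ is monotone nondecreasing, up to an error that is integrable in $r$ precisely because $g\in C^{\beta-2s}$ with $\beta-2s>1-s$. This is exactly the point where the improvement over \cite{CSS} enters: the error terms in $\frac{d}{dr}N(r)$ are controlled by $\|g\|_{C^{\beta-2s}} r^{\beta-2s-1}$ type quantities (after the right rescaling), and $\beta - 2s - 1 > -1$ makes these integrable, which is all that is needed for $N(0^+)$ to exist and for the standard blow-up analysis to run. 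The main obstacle will be carrying out this differentiation-under-the-integral computation carefully enough to extract the sharp exponent, i.e. verifying that no term requires $\beta$ as large as $3$; I would do this by the usual Rellich–Nečas / domain-variation identities for $L_a$, treating the $g$-contribution as a perturbation and bookkeeping its scaling.

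Once monotonicity of $N$ is established, the remaining steps are by now standard and I would only sketch them. From $N(0^+)\ge 1+s$ for every free boundary point (the gap from $1+s$ to $2$ being excluded by a classification of homogeneous global solutions of the Signorini problem) one obtains the optimal growth $|w(x)| \le C|x-x_0|^{1+s}$, hence $u = w + \varphi \in C^{1+s}(B_{1/2})$ with the stated estimate (using $\varphi \in C^\beta$, $\beta > 1+s$). For the free boundary regularity near a regular point $x_0$ (those with $N(0^+) = 1+s$, equivalently $(u-\varphi)(x)\ne o(|x-x_0|^{1+s})$), one shows the blow-up is unique and equals, up to rotation, the model solution $c\,(\mathrm{Re}(x_n + i|y|))^{1+s}_+$; an epiperimetric-inequality or a direct monotonicity-of-the-blow-up argument then gives a rate of convergence to the blow-up, which upgrades to $C^{1,\gamma}$ regularity of $\Gamma$ in a neighborhood of $x_0$ via the usual argument (the normal direction to $\Gamma$ inherits a Hölder modulus from the convergence rate). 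All constants depend only on $n$, $s$, and $\beta$, since that is all the monotonicity computation and the compactness arguments see.
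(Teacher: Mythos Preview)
Your strategy is sound and leads to the same conclusion, but it differs from the paper's route in one essential technical choice: where the error term lives. You subtract the obstacle on $\R^n$ (setting $w=u-\varphi$) and then extend, so the inhomogeneity appears as a Neumann datum $g=f-\triangle^s\varphi\in C^{\beta-2s}$ on the thin space $\{y=0\}$; the perturbation terms in the frequency computation are then boundary integrals of the form $\int_{B_r\cap\{y=0\}} W\,g$ and $\int_{B_r\cap\{y=0\}}(x\cdot\nabla W)\,g$. The paper instead first subtracts a potential for $f$ (so $f\equiv 0$), then extends the obstacle itself to $\R^{n+1}$ by mollification, $\tilde\varphi(x,y):=\varphi*\rho_{|y|}$, and works with $\tilde u=u-\tilde\varphi$. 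The error then sits entirely in the \emph{bulk}: $L_a\tilde u=-L_a\tilde\varphi$ away from $\Lambda$, and the single pointwise bound $\|D^2\tilde\varphi\|\le C|y|^{s+\delta-1}$ (a direct consequence of $\varphi\in C^{1,s+\delta}$) makes all three error terms $H_1,H_2,H_3$ in the derivative of the frequency integrable against the weight $|y|^a$. What the paper's device buys is that one never needs to compute $\triangle^s\varphi$ (so no global extension of $\varphi$ is required, and no appeal to Schauder estimates for $\triangle^s$), and the Cauchy--Schwarz bookkeeping is done once against a bulk quantity rather than separately on the thin set. Your approach is closer in spirit to Guillen's treatment of the case $s=1/2$ and is perfectly workable, but you should be explicit that you first subtract a function with $\triangle^s v=g(x_0)$ at the free boundary point (so that the residual Neumann datum vanishes there), since otherwise the claimed bound $\|g\|_{C^{\beta-2s}}r^{\beta-2s-1}$ does not hold.

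Two small corrections: in your displayed $N(r)$ the numerator should be a solid integral $\int_{B_r^+}y^a|\nabla W|^2$, not a surface integral over $\partial B_r^+$; and you need to say how $\varphi$ is extended outside $B_1$ before writing $\triangle^s\varphi$, since the operator is nonlocal.
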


Theorem \ref{T31} was obtained by Caffarelli, Salsa and Silvestre in [CSS] . The main tool in the proof is to establish a version of 
Almgren's frequency formula for the ``extension" of $u$ to $\R^{n+1}$. Theorem \ref{T31} is proved in \cite{CSS} in the case when $\varphi \in C^{2,1}$ ( i.e. $\beta=3$). Below we 
show that the Almgren's monotonicity formula still holds when $\beta>1+s$. Since this is the only 
place in the proof in \cite{CSS} where the regularity of the data is needed, we obtain the version of 
Theorem \ref{T31} above.

Finally we remark that in the case when $\beta \in (2s,1+s)$ the $C^{1,\alpha}$ regularity of $u$ 
with $\alpha < \beta$ was obtained by Silvestre in \cite{S2}.

\subsection{Almgren's monotonicity formula} In this section, $\mathcal B_r$ will denote a ball in $\R^{n+1}$ and $B_r:= \mathcal B_r \cap \{x_{n+1}=0\}.$ Also, $X=(x,x_{n+1})$ is a point in $\R^{n+1}$ and often we call $y=x_{n+1}.$

After subtracting an explicit function whose fractional Laplacian equals $f$, we may assume 
without loss of generality that $f=0$. 
Let $u$ be a solution in $B_2$ to the thin obstacle problem 
\begin{align}\label{TO}
\nonumber & u \geq \varphi \quad \quad \quad \text{in $B_2\subset \R^n$}\\
& \Delta^s u = 0 \quad \text{in $\{u>\varphi\}\cap B_2$}\\
\nonumber & \Delta^s u \leq 0 \quad \text{in $B_2$}
\end{align}
with $\varphi: B_2 \to \R$ a continuos function.

Consider the equivalent (localized) problem obtained extending $u$ to $\R^{n+1}$, evenly in the $y=x_{n+1}$ direction,
\begin{align*}
& u(x,0) \geq \varphi \quad \text{for $x\in B_2$}\\
& u(x,y)=u(x,-y)\\
& L_a u=\text{div}(|y|^a\nabla u(x,y))=0 \quad \text{in $\mathcal B_2 \setminus \{u(x,0)=\varphi(x)\}$}\\
& L_a u \leq 0 \quad \text{in $\mathcal B_2$ in the distributional sense}
\end{align*}
where $$a:=1-2s, \quad \quad a \in (-1,1).$$

Assume $\varphi \in C^{1,s+\delta}(B_2)$, for some $\delta>0$ and $\|\varphi \|_{C^{s+\delta}}\le 1$. We extend $\varphi$ to $\mathcal B_1$ in the following way:
\begin{equation}\label{extf}
\tilde \varphi(x,y):= \varphi * \rho_{|y|},
\end{equation}
with $\rho_r(X):=r^{-n-1}\rho(X/r)$, and $\rho$ a symmetric mollifier supported in $\mathcal B_{1}$. Then it is easy to check that
$\tilde \varphi \in C^{1,s+\delta}$ is even in $y$ and is smooth away from $\{y=0\}$, and
\begin{equation}\label{lab}
\|D^2 \tilde \varphi \| \le C |y|^{s+\delta -1}  \quad \Rightarrow \quad |y|^{-a}L_a \tilde \varphi \le C |y|^{s+\delta -1}.
\end{equation}

Define, 
$$\tilde u(x,y)=u(x,y) - \tilde \varphi(x, y),$$
and let 
$\Lambda :=\{\tilde u (x,0)=0\}$. Then $\tilde u$ satisfies
\begin{equation*}\begin{cases}
 \tilde u(x,0) \geq 0  \quad \text{for $x\in B_1$}\\
 \tilde u(x,y)=\tilde u(x,-y)\\
 L_a \tilde u= - L_a \tilde \varphi \quad \text{in $\mathcal B_1 \setminus \Lambda$}
\end{cases}\end{equation*}

Denote by
$$F(r) := \frac{1}{r^{n+a}}\int_{\p \mathcal B_r} \tilde u^2 |y|^a d\sigma,$$
and notice that if for example $\tilde u$ is homogenous of degree $\sigma$, then $F(r)=c \, r^{2 \sigma}$, 
hence $ \frac 12 r \, \frac{d}{dr} \log F=  \sigma$.

\begin{thm}[Almgren's monotonicity formula]\label{monotone} Let $0 \in \Lambda$ and $ \alpha \in (s,s+\delta)$. 
There exist constants $C_0$ and $r_0$ depending on $\alpha$, $s$ $n$, and $\delta$ such that the function 
$$\Phi_{\tilde u}(r): = \frac 1 2(r+C_0 r^{1+\eps}) \frac{d}{dr} \log \left(\max\{F(r), r^{2(1+\alpha)}\} \right)$$ is monotone increasing for all $0< r \leq r_0,$ where $\eps > 0$ is small so that $s+\delta \ge \alpha +\eps.$ 
\end{thm}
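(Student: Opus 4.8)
The plan is to follow the classical Almgren strategy, adapted to the perturbed setting where $\tilde u$ is not $L_a$-harmonic but satisfies $L_a\tilde u=-L_a\tilde\varphi$ with a right-hand side controlled by $|y|^{-a}|L_a\tilde\varphi|\le C|y|^{s+\delta-1}$, i.e. an error which is integrable against the weight $|y|^a$ and vanishes at the origin at a rate strictly better than the threshold $1+\alpha$. First I would introduce the companion quantities
$$H(r):=\frac{1}{r^{n+a}}\int_{\p\mathcal B_r}\tilde u^2|y|^a\,d\sigma=F(r),\qquad D(r):=\frac{1}{r^{n+a-1}}\int_{\mathcal B_r}|\nabla\tilde u|^2|y|^a\,dX,$$
and the frequency $N(r):=D(r)/H(r)$. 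Using the even reflection, the complementarity conditions of the thin obstacle problem (so that the boundary term $\int \tilde u\,|y|^a\partial_\nu\tilde u$ on $\{y=0\}$ has a favorable sign — it is $\int \tilde u(x,0)\,\Delta^s u\le 0$ on the free part and vanishes on $\Lambda$), and integration by parts against the vector field $X$, I would derive the Rellich–Pohozaev type identities for $H'(r)$ and $D'(r)$, each now carrying an extra term coming from $L_a\tilde\varphi$. The key bookkeeping is that every such error term is bounded by $C r^{\text{something}}\bigl(H(r)^{1/2}+1\bigr)$ or by $Cr^{2(s+\delta)}$, which is negligible compared with $\max\{H(r),r^{2(1+\alpha)}\}$ precisely because $\alpha<s+\delta$ and $\alpha+\eps\le s+\delta$.

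Next I would split into the two regimes dictated by the $\max$. On the set of radii where $H(r)\le r^{2(1+\alpha)}$, the function $\tfrac12(r+C_0r^{1+\eps})\tfrac{d}{dr}\log r^{2(1+\alpha)}=(1+\alpha)(1+C_0 r^\eps)$ is manifestly increasing, so there is nothing to prove there except to check the matching/continuity at the endpoints of such intervals. On the set where $H(r)\ge r^{2(1+\alpha)}$, one has $\Phi_{\tilde u}(r)=\tfrac12(r+C_0r^{1+\eps})\,\tfrac{d}{dr}\log H(r)$, and the standard computation gives
$$\frac{d}{dr}\log H(r)=\frac{2}{r}\,N(r)+\text{(error)},$$
where the error is $O(r^{\,\mu-1})$ with $\mu>0$ owing to the estimate on $L_a\tilde\varphi$ and the lower bound $H(r)\ge r^{2(1+\alpha)}$. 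Then I would show $\tfrac{d}{dr}\bigl[(1+C_0r^\eps)N(r)\bigr]\ge -C$ and, more importantly, that whenever $N(r)$ would drop below $1+\alpha$ the lower bound $H(r)\ge r^{2(1+\alpha)}$ together with $\tfrac{d}{dr}\log H=2N/r+o(1/r)$ forces $H$ to decay faster than $r^{2(1+\alpha)}$, pushing us back into the other regime; this is the mechanism that makes the truncated quantity monotone even though the bare frequency need not be. The constant $C_0$ and the factor $(1+C_0r^{1+\eps})$ are chosen exactly to absorb the $O(1)$ and $O(r^{\mu-1})$ error terms into a genuine sign, via $\tfrac{d}{dr}\log(r+C_0r^{1+\eps})=\tfrac1r+\tfrac{\eps C_0 r^{\eps-1}}{1+C_0r^\eps}$.

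The main obstacle, and the place where real work is needed beyond the $f=0$, $\varphi\in C^{2,1}$ case of \cite{CSS}, is the careful estimation of the perturbation terms produced by $L_a\tilde\varphi$ in both the differentiation of $H$ and the differentiation of $D$: one must verify that each is dominated — after dividing by $H$ or by $\max\{H,r^{2(1+\alpha)}\}$ — by a term of the form $Cr^{\eps-1}$ (so it is killed by the $C_0r^{1+\eps}$ correction) rather than a non-integrable $Cr^{-1}$, and this is exactly where the hypotheses $\varphi\in C^{1,s+\delta}$, $\alpha\in(s,s+\delta)$, and $s+\delta\ge\alpha+\eps$ are used. A secondary technical point is handling the non-smoothness of $\tilde\varphi$ across $\{y=0\}$ (only $C^{1,s+\delta}$, with $D^2\tilde\varphi$ blowing up like $|y|^{s+\delta-1}$): all integrations by parts must be justified by first working on $\mathcal B_r\setminus\{|y|<\tau\}$ and letting $\tau\to0$, using that $|y|^{s+\delta-1}|y|^a=|y|^{s+\delta-2s}=|y|^{\delta-s}$ is integrable near the hyperplane. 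Once these estimates are in place, the monotonicity of $\Phi_{\tilde u}$ follows by combining the two-regime analysis above with a standard ODE/absorption argument.
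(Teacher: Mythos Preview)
Your plan is essentially the paper's proof: split on the $\max$, and in the regime $F(r)\ge r^{2(1+\alpha)}$ compute $(\log\Phi_{\tilde u})'$, extract the nonnegative Cauchy--Schwarz term from the Rellich identity, and show that each perturbation term arising from $L_a\tilde\varphi$ is bounded by $Cr^{\eps-1}$ so as to be absorbed by the correction $\eps C_0 r^{\eps-1}$. The one ingredient you do not make explicit, and which the paper isolates as a preliminary lemma before the main computation, is that in this regime one has $\fint_{\mathcal B_r}\tilde u^2\,|y|^a\,dX\le CF(r)$ and, crucially, the lower bound $r^{-1}F'(r)\sim\fint_{\mathcal B_r}|\nabla\tilde u|^2\,|y|^a\,dX\ge Cr^{-2}F(r)$, obtained via a weighted Poincar\'e inequality together with $0\in\Lambda$ (so the weighted average of $u$ on $\partial\mathcal B_r$ is nonpositive and $u\ge -r^{1+\alpha+\eps}$ on $B_r$); this lower bound on $F'$ is precisely what turns an error of size $Cr^{\alpha+\eps-1}F(r)^{1/2}$, after division by $F'(r)$ rather than by $H=F$, into $Cr^{\eps-1}$, and without it your sentence ``dominated after dividing by $H$'' does not yield the right power.
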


For simplicity we also use the notation of the ``averages" of a function $g$ with respect to the measures $|y|^a d \sigma$ and $|y|^adX$:
$$\fint_{\p \mathcal B_r} g  \, \, |y|^a d\sigma : = \frac{1}{ r^{n+a}} \int_{\p \mathcal B_r} g |y|^ad\sigma$$
and 
$$\fint_{\mathcal B_r} g \, \, |y|^a dX : = \frac{1}{r^{n+1+a}} \int_{\mathcal B_r} g |y|^ad X.$$
With this notation,
$$F(r) := \fint_{\p \mathcal B_r} \tilde u^2 |y|^a d\sigma,$$ and 
$$F'(r) = 2 \fint_{\p \mathcal B_r} \tilde u \tilde u_\nu|y|^a d\sigma.$$

First, we prove the following preliminary lemma.

\begin{lem} \label{positive}Assume $F(r) \geq r^{2(1+\alpha)}$. Then, for $r$ small
$$\fint_{\mathcal B_r} \tilde u^2 |y|^a dX \leq C F(r).$$
$$r^{-1} F'(r) \sim \fint_{\mathcal B_r} |\nabla \tilde u|^2 |y|^a dX \geq C r^{ -2} F(r)$$
\end{lem}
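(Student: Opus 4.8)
The plan is to prove the three estimates in the order they are listed, since each builds on the previous one, and to keep everything at the level of elementary weighted integration-by-parts identities together with the hypothesis $F(r)\ge r^{2(1+\alpha)}$ and the structural equation $L_a\tilde u=-L_a\tilde\varphi$ with the bound \eqref{lab}. First I would record the ``energy'' quantity
$$D(r):=\fint_{\mathcal B_r}|\nabla\tilde u|^2|y|^a\,dX,$$
and observe via the divergence theorem (integrating $\mathrm{div}(|y|^a\tilde u\nabla\tilde u)$ over $\mathcal B_r$) that
$$\int_{\mathcal B_r}|\nabla\tilde u|^2|y|^a\,dX=\int_{\p\mathcal B_r}\tilde u\,\tilde u_\nu|y|^a\,d\sigma-\int_{\mathcal B_r}\tilde u\,L_a\tilde u\,dX,$$
where the complementarity condition of the thin obstacle problem makes the distributional term $\int\tilde u\,L_a\tilde u$ essentially harmless: $L_a\tilde u=-L_a\tilde\varphi$ off $\Lambda$, and on $\{y=0\}$ the measure $L_a\tilde u+L_a\tilde\varphi$ is supported on $\Lambda=\{\tilde u(\cdot,0)=0\}$ so it is annihilated against $\tilde u$. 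Hence, using \eqref{lab} to bound $|L_a\tilde\varphi|\le C|y|^{a}|y|^{s+\delta-1}$ and that $s+\delta-1>-1+s>-1$ so the weight is integrable,
$$\Big|\int_{\mathcal B_r}\tilde u\,L_a\tilde u\,dX\Big|\le C\Big(\int_{\mathcal B_r}\tilde u^2|y|^a\Big)^{1/2}\Big(\int_{\mathcal B_r}|y|^a|y|^{2(s+\delta-1)}\Big)^{1/2}\le C\,r^{(n+1+a)/2+\,\theta}\,\Big(\int_{\mathcal B_r}\tilde u^2|y|^a\Big)^{1/2}$$
for some $\theta>0$ (here $\theta=s+\delta$), and after dividing by $r^{n+1+a}$ this perturbation is of lower order in $r$ relative to $F(r)$ once we invoke $F(r)\ge r^{2(1+\alpha)}$.

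For the first inequality, $\fint_{\mathcal B_r}\tilde u^2|y|^a\,dX\le CF(r)$, I would use the one-dimensional identity in the radial variable: writing $\mathcal B_r$ in spherical coordinates and noting $\tilde u(0)=0$, for each ray one has $\tilde u(\rho\omega)^2\le \rho\int_0^\rho (\p_t\tilde u(t\omega))^2\,dt\le \rho\int_0^\rho|\nabla\tilde u|^2\,dt$, so integrating against $|y|^a$ and then in $\rho\in(0,r)$ gives $\int_{\mathcal B_r}\tilde u^2|y|^a\lesssim r^2\int_{\mathcal B_r}|\nabla\tilde u|^2|y|^a$; combined with the energy identity above this is $\lesssim r^2(D(r)\cdot r^{n+1+a})$. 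To close the loop I need $r^2 D(r)\lesssim F(r)/r^{n+a}\cdot$(something), i.e. I need the differential inequality $r^{-1}F'(r)\gtrsim r^{-2}F(r)$, which is exactly the second displayed claim — so in practice I would prove the second and first inequalities together. The mechanism is: $F'(r)=2\fint_{\p\mathcal B_r}\tilde u\tilde u_\nu|y|^a$, and on one hand $r^{-1}F'(r)$ is comparable (up to the lower-order error term, controlled as above by $F(r)\ge r^{2(1+\alpha)}$) to $\fint_{\mathcal B_r}|\nabla\tilde u|^2|y|^a=D(r)$, giving the ``$\sim$''; on the other hand a Poincaré-type inequality on the sphere $\p\mathcal B_r$ with the weight $|y|^a$ and the boundary condition that $\tilde u\ge0$ on the thin set — more precisely a weighted Poincaré inequality with vanishing at a point or on $\Lambda\cap\p\mathcal B_r$ — yields $\int_{\p\mathcal B_r}\tilde u^2|y|^a\le C r^2\int_{\mathcal B_r}|\nabla\tilde u|^2|y|^a$ (the factor $r^2$ by scaling), i.e. $F(r)\le Cr^2 D(r)$, which is the stated lower bound $r^{-1}F'(r)\ge Cr^{-2}F(r)$.

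The main obstacle is the last step: getting the clean comparison $F(r)\le Cr^2 D(r)$ uniformly for small $r$, together with rigorously discarding the obstacle contribution $\int\tilde u\,L_a\tilde u$. For the first, the issue is that $\tilde u$ need not vanish at $0$ on $\p\mathcal B_r$ in a quantitative way, so the relevant weighted Poincaré inequality must exploit either that $\tilde u\ge 0$ on the contact set (using that $\Lambda$ is nonempty near $0$, since $0\in\Lambda$, so $\tilde u$ actually vanishes somewhere on scale-$r$ pieces of the thin set) or a trace/extension argument reducing to the known Poincaré inequality for the $A_2$-weight $|y|^a$; I would set this up by scaling to $r=1$ and arguing by a compactness/contradiction if a direct constant is awkward. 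For the second, the point to be careful about is that $\tilde u\,L_a\tilde u$ as a measure pairing makes sense because $\tilde u$ is continuous and $L_a\tilde u$ is a signed Radon measure whose singular part lives on $\Lambda\times\{0\}$ where $\tilde u=0$; this is where the hypothesis that $u$ solves the thin obstacle problem (not just an inequality) is essential, and I would cite the standard structure of solutions to \eqref{TO} for it. Everything else — the divergence theorem with the $A_2$ weight $|y|^a$, $a\in(-1,1)$, and the elementary radial estimate — is routine.
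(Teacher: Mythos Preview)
Your argument has two genuine gaps, and both concern places where the paper's proof takes a different route.

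\medskip

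\textbf{First inequality.} Your radial Hardy estimate gives at best $\fint_{\mathcal B_r}\tilde u^2|y|^a\,dX\lesssim r^2 D(r)$. To conclude $\lesssim F(r)$ you say you need $r^2 D(r)\lesssim F(r)$, and claim this is ``exactly the second displayed claim''. It is not: the second claim is the \emph{reverse} inequality $r^2 D(r)\ge cF(r)$. An upper bound of $D(r)$ by $r^{-2}F(r)$ would be a Caccioppoli-type estimate, which you have not established and which is not part of the lemma. The paper avoids this entirely: it works with $u$ rather than $\tilde u$, observes that $L_a u=0$ on $\{|u|>r^{1+\alpha+\eps}\}$ so that $\big((|u|-r^{1+\alpha+\eps})^+\big)^2$ is $L_a$-subharmonic, and applies the mean value inequality to bound the solid average by the spherical average. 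Since $|\tilde\varphi|\le r^{1+\alpha+\eps}$ and $F(r)\ge r^{2(1+\alpha)}$, this transfers to $\tilde u$ with the same bound. This is an independent argument that does not rely on the gradient at all, and in particular breaks the circularity in your scheme (your Cauchy--Schwarz control of $\int\tilde u\,L_a\tilde\varphi$ already presupposes the first inequality).

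\medskip

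\textbf{Poincar\'e step.} The inequality $F(r)\le Cr^2 D(r)$ cannot be obtained from $\tilde u(0)=0$ alone: a single point has zero capacity in dimension $n+1\ge 2$, so ``vanishing at a point'' does not yield a Poincar\'e inequality, and a compactness argument will fail for the same reason. The paper instead works again with $u$ and splits $u=u^+-u^-$. For $u^+$ it uses that $L_a u\le 0$ and $u(0)=0$ force the weighted spherical average of $u$ to be nonpositive, which combined with the CSS Poincar\'e inequality on $\partial\mathcal B_r$ controls $\fint_{\partial\mathcal B_r}(u^+)^2|y|^a$. For $u^-$ it uses the obstacle condition $u\ge\varphi\ge -r^{1+\alpha+\eps}$ on the thin ball $B_r$ together with a trace-Poincar\'e inequality (subtracting the average over $B_r$), so that $\fint_{\partial\mathcal B_r}(u^-)^2|y|^a$ is controlled up to an error $Cr^{2(1+\alpha+\eps)}$ absorbed by $F(r)$. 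This is where the structure of the thin obstacle problem enters in a sharper way than the complementarity condition you invoke.
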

\begin{proof}
Assume for simplicity that $u(0)=\varphi(0)=0$, $\nabla \varphi(0)=0$, hence $$|\tilde \varphi| \le C r^{1+s+\delta} \le r^{1+\alpha+\eps} \quad \mbox{ in $B_r$},$$ 
hence the functions $u$ and $\tilde u$ are ``the same" up to an error of $r^{1+\eps}$.
Since $F(r) \ge r^{2(1+\alpha)}$ we obtain
$$\fint_{\p \mathcal B_r} u^2 |y|^a d\sigma \quad  \sim \quad \fint_{\p \mathcal B_r} \tilde u^2 |y|^a d\sigma=F(r).$$
Since $L_a u=0$ in the set $\{ |u|> r^{1+\alpha +\eps} \}$ we may apply the mean value inequality for the $L_a$- subharmonic function $$\left( (|u| -r^{1+\alpha +\eps})^+\right)^2$$ 
and obtain that its average in $\mathcal B_r$ is bounded by its average on $\p \mathcal B_r$. This easily gives the first inequality above.

For the second inequality we have $L_a u \le 0$ and $u(0)=0$, hence the average of $u$ on $\p \mathcal B_r$ is negative. From this and the version of Poincare inequality written for $\p \mathcal B_r$ (see Lemma 2.10 in \cite{CSS}) we obtain
$$r^2 \fint_{\mathcal B_r} |\nabla u|^2 |y|^a dX \geq c \fint_{\p \mathcal B_r} (u^+)^2 |y|^a d\sigma.$$ 

Moreover, similarly to the quoted lemma, since a function $v$ in the weighted Sobolev space $W^{2,1}(\mathcal B_1, |y|^a)$ has trace in $L^2(B_1)$, we also have the following version of Poincare inequality:
$$ r^2 \fint_{ \mathcal B_r^+} |\nabla v|^2|y|^a dX \geq c \fint_{\p \mathcal B_r^+} (v-\bar v)^2 |y|^a d\sigma $$ with 
$$\bar v := \fint_{B_r} v(x,0) dx.$$
Hence, since $u \ge -r^{1+\alpha + \eps}$ on $B_r$, we deduce that $$ r^2\fint_{\mathcal B_r} |\nabla u|^2 |y|^a dX \geq c \fint_{\p \mathcal B_r} (u^-)^2 |y|^a dX - C r^
{2(1+\alpha +\eps)}. $$
Using that $\nabla \tilde u=\nabla u + O(r^{1+\alpha+\eps})$ we obtain
$$\fint_{\mathcal B_r} |\nabla \tilde u|^2 |y|^a dX \geq C r^{ -2} F(r).$$

Finally, 
$$\int_{\mathcal B_r} (\tilde u L_a \tilde u + |\nabla \tilde u|^2 |y|^{a})dX=\int_{\mathcal B_r} \text{div}(|y|^a\tilde u \nabla \tilde u)dX=\int_{\p \mathcal B_r} \tilde u \tilde 
u_\nu|y|^a d\sigma,$$
thus, since $\tilde u L_a \tilde u= -\tilde u L_a \tilde \varphi$ we have
$$\frac {1}{2r} F'(r) = \frac 1 r \fint_{\p \mathcal B_r} \tilde u \tilde u_\nu|y|^a d\sigma=    \fint_
{\mathcal B_r} (|\nabla \tilde u|^2 - |y|^{-a}\tilde u L_a \tilde \varphi)|y|^{a}dX.$$
By Cauchy-Schwartz and the property \eqref{lab} of $\tilde \varphi$ we have
\begin{align*}\left| \fint_{\mathcal B_r} \tilde u (|y|^{-a}L_a \tilde \varphi)|y|^ad\sigma\right| &\leq \left(\fint_{ \mathcal B_r} \tilde u^2 |y|^ad\sigma \right)^{1/2} \left(\fint_{ \mathcal B_r} (|y|^{-a} L_a \tilde \varphi)^2|y|^ad\sigma\right)^{1/2}\\ &\leq C r^{\alpha +\eps -1}F(r)^{1/2},\end{align*}
and we obtain the desired conclusion (using also that $F(r) \geq r^{2(1+\alpha)}$).
\end{proof}

{\it Proof of Theorem $\ref{monotone}$.}
It is enough to consider the case when $$F(r) \geq r^{2(1+\alpha)}.$$
Then,
$$\Phi_{\tilde u}(r)= \frac 1 2(r+C_0 r^{1+\eps}) \frac{F'(r)}{F(r)}.$$
We compute its logarithmic derivative and show that it is non-negative. Precisely, we look at the quantity:
$$N(r):=\frac{1}{r}+ \frac{\eps C_0 r^{\eps-1} }{1+C_0 r^{\eps}} +  \frac{F''(r)}{F'(r)} - \frac{F'(r)}{F(r)}.$$
As in Lemma \ref{positive},
\begin{equation}\label{uunu}
\int_{\p \mathcal B_r} \tilde u \tilde u_\nu|y|^a d\sigma=  \int_{\mathcal B_r} (|\nabla \tilde u|^2 + |y|^{-a}\tilde u L_a \tilde u)|y|^{a}dX.\end{equation}
Thus,
$$F''(r)= -\frac {(n+a)}{r} F'(r) + 2\fint_{\p\mathcal B_r} (|\nabla \tilde u|^2 + |y|^{-a} \tilde u L_a\tilde \varphi)|y^a| d\sigma.$$
As in [CSS] we can estimate that
\begin{align*}\fint_{\p \mathcal B_r} |\nabla \tilde u|^2 |y|^a d\sigma &= 2 \fint_{\p \mathcal B_r} (\tilde u_\nu)^2|y|^a d\sigma + \frac{n+a-1}{r} \fint_{\p \mathcal B_r} \tilde u \tilde u_\nu |y|^a d\sigma\\& - \fint_{\mathcal B_r} ((n+a-1)\tilde u - 2 X \cdot \nabla \tilde u)(|y|^{-a} L_a\tilde \varphi) |y|^adX.\end{align*}
Hence,
$$N(r)= \frac{\eps C_0 r^{\eps-1} }{1+C_0 r^{\eps}} +  \frac{4 \fint_{\p \mathcal B_r} (\tilde u_\nu)^2|y|^a d\sigma}{F'(r)} - \frac{F'(r)}{F(r)} + \frac{H(r)}{F'(r)},$$
with
\begin{align*}H(r)& = 2 \fint_{\p \mathcal B_r} \tilde u (|y|^{-a}L_a\tilde \varphi) |y|^{a}d\sigma - (n+a-1)\fint_{\mathcal B_r} \tilde u(|y|^{-a}L_a\tilde \varphi) |y|^{a}dX\\
& +4 \fint_{\mathcal B_r} (X \cdot \nabla \tilde u)( |y|^{-a}L_a\tilde \varphi) |y|^{a}dX\\&:= H_1(r) + H_2(r) + H_3(r).\end{align*}
By Cauchy-Schwartz, we conclude that (for $r$ small)
\begin{equation}\label{N}N(r) \geq \frac{\eps C_0 r^{\eps-1} }{1+C_0 r^{\eps}} +  \frac{H(r)}{F'(r)}  \geq \eps \frac{C_0}{2} r^{\eps-1} + \frac{H(r)}{F'(r)}.\end{equation}
We now estimate $H(r).$
As in Lemma \ref{positive} we use property \eqref{lab} of $\tilde \varphi$ and conclude
$$\left| \fint_{\mathcal B_r} \tilde u (|y|^{-a}L_a \tilde \varphi)|y|^adX\right| \leq C r^{\alpha + \eps -1}F(r)^{1/2},$$
and with a similar computation 
\begin{align*}\left| \fint_{\p\mathcal B_r} \tilde u (|y|^{-a}L_a \tilde \varphi)|y|^ad\sigma\right| &\leq C r^{\alpha +\eps -1}F(r)^{1/2}.\end{align*}
 In the same way,
 \begin{align*}\left |\fint_{\mathcal B_r} (X \cdot \nabla \tilde u)( |y|^{-a}L_a\tilde \varphi) |y|^{a}dX\right|& \leq r \left(\fint_{\mathcal B_r} |\nabla \tilde u|^2 |y|^{a}dX\right)^{1/2}\left(\fint_{\mathcal B_r} (|y|^{-a} L_a \tilde \varphi)^2|y|^a dX\right)^{1/2}\\ & \leq r^{\alpha + \eps}\left(\fint_{\mathcal B_r} |\nabla \tilde u|^2 |y|^{a}dX\right)^{1/2},\end{align*}
hence by Lemma \ref{positive}
$$\frac{|H_1(r)|}{F'(r)} \leq Cr^{\eps -1}, \quad \quad \frac{|H_2(r)|}{F'(r)} \leq Cr^{\eps -1}, \quad \quad \frac{|H_3(r)|}{F'(r)} \leq Cr^{\eps -1}.$$
 Combining these estimates with \eqref{N} we get that $N(r)>0$ for $C_0$ large and $r$ small.
\qed

\medskip

Now the arguments in [CSS] apply, and they give that if $0 \in \p \Lambda$ then the limit 
$\Phi(0+)$ can take only two values: $1+s$ and $1+\alpha$, and this implies the $C^{1,s}$ 
regularity of $u$. If this limit $\Phi(0+)$ equals $1+s$ we say that $0$ is a regular point. Then the monotonicity 
formula allows us to perform the blow-up analysis at a regular point and to obtain the 
$C^{1,\gamma}$ regularity of the free boundary.
In view of this, we sharpen the regularity results of [CSS] for the thin obstacle problem, in the 
case when the obstacle $\varphi \in C^{1,s+\delta},$ and obtain Theorem \ref{T31}.

\subsection{An extension of Theorem \ref{monotone}}  We consider here the case when the obstacle $\varphi$ is $C^{1+s+\delta}$ only in a certain pointwise sense and $u$ has nearly optimal regularity. This case appears in \cite{CDS} where we deal with the obstacle problem for non-local minimal surfaces. 
Precisely, we obtain the following proposition.

\begin{prop} \label{ls5} Let $u \in C^{2s+\eps}$ solve the obstacle problem \eqref{opti3}-\eqref{opti4}, $0 \in \p \Lambda$. Assume that $\|u\|_{L^1(\R^n,d\omega )} \le 1$ and $\nabla u$ is pointwise $C^{s- \frac \delta 2}$ at the origin, i.e.
\begin{equation}\label{uo}
|\nabla u(x)| \le |x|^{s-\frac \delta 2}\quad \quad \mbox{in $B_1$.}
\end{equation}
If $\varphi \in C^{2s+\eps}$, $\nabla \varphi$ is pointwise $C^{s+\delta}$ at the origin i.e., for all $r <1$
\be\label{nab_phi}
|\nabla \varphi|_{L^\infty(B_r)} \le r^{s+\delta}  \quad \mbox{if $s\in (0, \frac 12)$}\ee
$$ [\nabla \varphi]_{C^{2s + \delta -1}(B_r)} \le r^{1-s}\quad \mbox{if $s\in [ \frac 12,1)$},$$
and $f$ satisfies\be\label {conf}
[f]_{C^\gamma(B_r)} \le C r^{s+\delta} \quad \mbox{for some $\gamma>1-2s$, if $s \in (0,1/2)$},  \ee
$$[f]_{C^\delta(B_r)} \le C r ^{1-s} \quad \mbox{if $s \in [1/2,1)$,}  $$ 
then $u$ is pointwise $C^{1,s}$ at the origin i.e. 
\be\label{conu}
|u(x)| \le C |x|^{1+s} \quad \quad \mbox{in $B_{1}$},\ee
for some $C$ depending only on $n$, $s$ and $\delta$.
\end{prop}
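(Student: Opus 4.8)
The plan is to adapt the Almgren frequency-function argument from the proof of Theorem~\ref{monotone} to the present pointwise setting, where the global bounds $\|\varphi\|_{C^{1,s+\delta}}\le1$ and $\|f\|_{C^{0,1}}\le1$ are replaced by the pointwise decay hypotheses \eqref{uo}, \eqref{nab_phi}, \eqref{conf} at the origin. First I would perform the standard reductions. Since $0\in\p\Lambda$ is a free boundary point, $u(0)=\varphi(0)$ and (by \eqref{uo}, \eqref{nab_phi}) $\nabla u(0)=\nabla\varphi(0)$, so after subtracting an affine function we may assume $u(0)=\varphi(0)=0$ and $\nabla u(0)=\nabla\varphi(0)=0$. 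Next I would reduce to $f\equiv0$: using a pointwise localization at the origin of the Schauder estimate of Proposition~\ref{SE5}, one produces $w$ with $\triangle^s w=f$ in $B_1$, $w(0)=0$, $\nabla w(0)=0$ (recall $\triangle^s$ annihilates affine functions), which by hypothesis \eqref{conf} --- whose two regimes $s<\tfrac12$ and $s\ge\tfrac12$ are calibrated precisely for this --- satisfies $|w(x)|\le C|x|^{1+s+\delta'}$ and $|\nabla w(x)|\le C|x|^{s-\delta/2}$ for some $\delta'>0$. Replacing $(u,\varphi)$ by $(u-w,\varphi-w)$, we may assume $f=0$, i.e. $u$ solves the thin obstacle problem \eqref{TO}, with \eqref{uo}--\eqref{nab_phi} still in force (with $\delta$ slightly decreased).

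Next I would set up the $L_a$-extension, $a=1-2s$: extend $u$ evenly in $y=x_{n+1}$, mollify the obstacle to $\tilde\varphi:=\varphi*\rho_{|y|}$ as in \eqref{extf}, and put $\tilde u:=u-\tilde\varphi$. The crucial preliminary step is the pointwise analogue of \eqref{lab}, namely
$$|\tilde\varphi(X)|\le C(|x|+|y|)^{1+s+\delta},\qquad |y|^{-a}\,|L_a\tilde\varphi(X)|\le C(|x|+|y|)^{s+\delta-1},$$
which follows from \eqref{nab_phi} because mollification at scale $|y|$ converts the pointwise $C^{1,s+\delta}$ control of $\varphi$ near the origin into the stated $D^2$-decay of $\tilde\varphi$ (the two regimes of \eqref{nab_phi} handled separately). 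With this estimate, the proofs of Lemma~\ref{positive} and of Theorem~\ref{monotone} carry over essentially verbatim, with $\alpha\in(s,s+\delta)$ fixed close to $s$: in the regime $F(r)\ge r^{2(1+\alpha)}$ one gets $\fint_{\mathcal B_r}\tilde u^2|y|^a\,dX\le CF(r)$ and $r^{-1}F'(r)\sim\fint_{\mathcal B_r}|\nabla\tilde u|^2|y|^a\,dX\ge cr^{-2}F(r)$, the errors from $\tilde\varphi$ being absorbed since $|u-\tilde u|=|\tilde\varphi|=o(r^{1+s})$ on $\mathcal B_r$ (by the above) and since \eqref{uo} bounds $|\nabla u|$ on $\mathcal B_r$ by $r^{s-\delta/2}$; and the error terms $H_i(r)/F'(r)$ in the computation of $N(r)$ are $O(r^{\eps-1})$ exactly as before. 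Hence $\Phi_{\tilde u}(r)=\tfrac12(r+C_0r^{1+\eps})\tfrac{d}{dr}\log\big(\max\{F(r),r^{2(1+\alpha)}\}\big)$ is monotone increasing for $0<r\le r_0$.

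With the monotonicity in hand, the arguments of \cite{CSS} apply as in the discussion following Theorem~\ref{monotone}: the limit $\Phi(0+)$ exists and equals either $1+s$ or $1+\alpha$ (choosing $\delta$, hence $\alpha$, close enough to $s$ rules out any intermediate admissible frequency). If $\Phi(0+)=1+\alpha$, the truncation is active, so $F(r)\le Cr^{2(1+\alpha)}$ for small $r$; interior estimates for $L_a$-harmonic functions on $\{u>\varphi\}$ together with the obstacle bound $|\varphi(x)|\le C|x|^{1+s+\delta}$ then give $|u(x)|\le C|x|^{1+\alpha}\le C|x|^{1+s}$. If instead $\Phi(0+)=1+s$, i.e. $0$ is a regular point, the rescalings $\tilde u(r\,\cdot)/F(r)^{1/2}$ are precompact --- \eqref{uo} and the equation supply the required uniform local bounds --- and converge along subsequences to a global $L_a$-harmonic profile homogeneous of degree $1+s$, the obstacle contribution dropping out because $\delta>0$; the \cite{CSS} blow-up analysis then yields $|u(x)|\le C|x|^{1+s}$ in $B_1$, which is \eqref{conu}.

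I expect the main obstacle to be the step carried out in the second paragraph: verifying that the pointwise hypotheses \eqref{nab_phi} and \eqref{conf} are exactly strong enough to reproduce the estimates $H_i(r)/F'(r)=O(r^{\eps-1})$ driving the monotonicity formula --- in particular establishing the $D^2$-decay $|y|^{-a}|L_a\tilde\varphi|\le C(|x|+|y|)^{s+\delta-1}$ of the mollified extension in both the $s<\tfrac12$ and $s\ge\tfrac12$ regimes, where the hypothesis is phrased differently --- and checking that the reduction to $f=0$ preserves all of \eqref{uo}--\eqref{nab_phi}.
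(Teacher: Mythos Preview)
Your overall strategy---reduce to $f=0$, pass to the $L_a$-extension, and verify that the Almgren monotonicity formula of Theorem~\ref{monotone} persists under the pointwise hypotheses---is exactly the paper's approach, and your identification of the crux (the $D^2$-decay of $\tilde\varphi$) is correct. There is, however, one genuine technical point where your execution diverges from what actually works.

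The pointwise bound you aim for,
\[
|y|^{-a}\,|L_a\tilde\varphi(X)|\le C(|x|+|y|)^{s+\delta-1},
\]
is too strong and in general fails. From \eqref{nab_phi} and the mollification \eqref{extf} one obtains only
\[
|D^2\tilde\varphi|\le C\,r^{s+\delta}\,|y|^{-1}\quad (s<\tfrac12),\qquad |D^2\tilde\varphi|\le C\,r^{1-s}\,|y|^{2s+\delta-2}\quad (s\ge\tfrac12),
\]
with $r=|X|$, so $|y|^{-a}L_a\tilde\varphi$ is more singular as $|y|\to0$ than $|X|^{s+\delta-1}$. As a consequence the $L^2$--$L^2$ (Cauchy--Schwarz) estimate used for the error terms $H_i$ in the proof of Theorem~\ref{monotone} breaks down here: the integral $\fint(|y|^{-a}L_a\tilde\varphi)^2|y|^a$ diverges. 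So the claim that the $H_i$ are handled ``exactly as before'' is not correct.

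The paper's fix---and this is the one new idea needed---is to replace Cauchy--Schwarz by an $L^\infty$--$L^1$ pairing. One checks that the \emph{averages} $\fint_{\mathcal B_r}|y|^{-a}|L_a\tilde\varphi|\,|y|^a\,dX$ and $\fint_{\p\mathcal B_r}|y|^{-a}|L_a\tilde\varphi|\,|y|^a\,d\sigma$ are still bounded by $Cr^{s+\delta-1}$ (the extra $|y|$-singularity is integrable against $|y|^a$), and pairs these with the $L^\infty$ bounds
\[
|\tilde u|,\ |X\cdot\nabla\tilde u|\ \le\ C\,r^{1+s-\delta/2}\quad\text{in }\mathcal B_r,
\]
which come precisely from hypothesis \eqref{uo} via boundary estimates for the extension $L_a u=0$. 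This yields $|H_i|\le Cr^{1+s-\delta/2}\cdot r^{s+\delta-1}=Cr^{2s+\delta/2}\le Cr^{2\alpha+\eps}$ for $\alpha$ close to $s$, and hence $|H_i|/F'(r)\le Cr^{\eps-1}$ as required. With this modification your argument goes through; the remaining steps (monotonicity, the dichotomy $\Phi(0+)\in\{1+s,1+\alpha\}$, and the conclusion \eqref{conu} via \cite{CSS}) are as you describe.
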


The Proposition above will follow if we show that the monotonicity formula can be applied under these hypotheses.

Assume first that the right hand side $f$ equals $0$. Since $u, \varphi \in C^{2s + \eps}$ in $B_1$, the integrations by parts performed in the monotonicity formula are justified. Now, using the boundary estimates for the equation $L_a u=0$ together with $ y^a u_y(0,y) \to 0$ as $y \to 0$ which is a consequence of $0 \in \p \Lambda$, we find that the extension $u(X)$ satisfies in $\mathcal B_r$
\begin{equation}\label{ubd}
| u| \le C r^{1+s-\frac \delta 2}, \quad |X\cdot \nabla u| \le C r^{1+s-\frac \delta 2}.
\end{equation}
In view of \eqref{nab_phi}, the extension $\tilde \varphi$ defined in \eqref{extf} satisfies in $\mathcal B_r$
$$|\tilde \varphi| \le r^{s+\delta+1}, \quad |\nabla \tilde \varphi| \le r^{s+\delta},$$
and
$$\frac{|u_y|}{|y|}, |D^2 \tilde \varphi| \le C r^{s+\delta} |y|^{-1}  \quad \mbox{if $s\in (0, \frac 12)$ or},  $$
$$\frac {|u_y|}{|y|} ,|D^2 \tilde \varphi| \le r^{1-s}|y|^{2s+\delta-2} \quad \mbox{if $s\in [ \frac 12,1)$}.$$
Since $a=1-2s$ and
$$ |y|^{-a}|L_a \tilde \varphi|\le C \left (|D^2 \tilde \varphi|+ \frac {|u_y|}{|y|}\right),$$
we see that $|y|^{-a}L_a \tilde \varphi$ is integrable with respect to the measures $|y|^a dX$ and $|y|^a d \sigma$, and its averages with respect to these measures in $\mathcal B_r$, respectively $\p \mathcal B_r$ are bounded by $C r^{s+\delta-1}$.

From these inequalities we see that $\tilde u=u-\tilde \varphi$ satisfies the same bounds in \eqref{ubd} and we can estimate the error terms $H_1$, $H_2$, $H_3$ by $$C r^{1+s-\frac \delta 2}r^{s+\delta-1}=Cr^{2s+\delta/2} \le C r^{2 \alpha +\varepsilon},$$ provided that $\alpha$ is taken sufficiently close to $s$ and $\varepsilon>0$ is small. The difference is that now we used the $L^\infty L^1$ bound for the product between the $\tilde u$ terms and $|y|^{-a}L_a \tilde \varphi$ terms instead of the $L^2L^2$ as before.  

In the general case when the right hand side $f$ is not $0$, then the potential whose fractional Laplacian equals $f$ must satisfy \eqref{nab_phi} and we need to impose the conditions in \eqref{conf}.

We mention that similar arguments with the ones that we provide above were used by Guillen in 
\cite{G} in a slightly different context.

\section{Appendix}

Below we discuss the Schauder estimates for translation invariant integro-differential equations of the type 
$$\mathcal L_K v(x)= P.V.\int (v(x+y)-v(x))K(y) dy,$$
with kernels $K$ that satisfy
\begin{equation}\label{Kca}
 \frac{\lambda}{|y|^{n+2s}} \leq K(y) \leq  \frac{\Lambda}{|y|^{n+2s}}, \quad 0<\lambda \leq \Lambda, 
\end{equation}
\begin{equation}\label{Kc1a}
|\nabla K(y)| \le \Lambda |y|^{-(n+1+2s)}.
\end{equation}
For convenience we state again the Schauder estimates used in Section 5.
 
\begin{prop}\label{SE}
Let $K$ be a symmetric kernel that satisfies \eqref{Kca}, and assume that $v \in L^1(\R^n,d\omega)$ satisfies
$$\mathcal L _K v=f \quad \mbox{in $B_1$}, \quad \quad \quad \|v\|_{L^\infty(B_1)} \le 1.$$

a) If $\|f\|_{L^\infty (B_1)} \le 1 $, $\|v\|_{L^1(\R^n,d \omega)} \le 1$ then $$\|v\|_{C^\alpha(B_{1/2})} \le C(\alpha), \quad \mbox{for any $\alpha < 2s$.}$$

b) If $K$ satisfies \eqref{Kc1a} and 
$$ \int_{\mathcal C B_1} v  \, \, |x|^{-(n+2s+1)} \, dx \le 1, \quad [f]_{C^\gamma (B_1)} \le 1, \quad \mbox{ for some $\gamma \in (0,1)$}$$
 then $$\|v\|_{C^{2s+\gamma}(B_{1/2})} \le C(\gamma),$$
provided that $2s+\gamma$ is not an integer.

c) Conversely, if $K$ satisfies \eqref{Kc1a} and $\|v\|_{L^1(\R^n,d \omega)} \le 1$, $\|v\|_{C^{2s+\gamma}(B_1)} \le 1$, then $$\|f\|_{C^\gamma(B_{1/2})} \le C.$$
\end{prop}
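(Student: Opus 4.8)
\emph{Proof proposal.} The plan is to obtain parts (a) and (b) by the Schauder--by--compactness scheme developed for rough kernels in \cite{CS1}, \cite{CS2}, \cite{Ser}, and part (c) by a direct pointwise computation. For (a) and (b) let $\alpha$ be the target exponent ($\alpha<2s$ in (a), $\alpha=2s+\gamma$ in (b)) and $m:=\lfloor\alpha\rfloor\in\{0,1,2\}$. By a standard dyadic iteration it suffices to prove an \emph{improvement of approximation}: if $v$ is normalized as in the hypotheses and, at every scale $r=2^{-l}$ with $l\le k$, there is a polynomial $P_l$ of degree $\le m$ with bounded coefficients satisfying $|v-P_l|\le r^\alpha$ in $B_r$, then the same holds for $l=k+1$. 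Iterating gives the pointwise $C^\alpha$ bound along this sequence of points, and extending it to all of $B_{1/2}$ by interior estimates (exactly as in the proofs of Propositions \ref{po1} and \ref{po2}) yields the conclusion. Here one uses repeatedly that, $K$ being symmetric, $\mathcal L_K$ annihilates affine functions and sends a (compactly modified) quadratic to a constant, so that subtracting $P_l$ from $v$ only perturbs the right hand side harmlessly.

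\emph{Part (a).} Run the improvement step by compactness: if it fails there are normalized $v_j$, kernels $K_j$ obeying \eqref{Kca}, and $f_j$ with $\|f_j\|_{L^\infty}\to0$ violating the conclusion at scale $2^{-(k+1)}$. Rescaling to the unit ball produces $\tilde v_j$ that are uniformly bounded in $B_1$, grow at most like $C|x|^\alpha$ outside $B_1$ --- this is what the approximation at all previous scales provides, and it controls the tail uniformly since $\alpha<2s$ --- and satisfy $\mathcal L_{\tilde K_j}\tilde v_j=\tilde f_j$ with $\tilde f_j\to0$ uniformly. By the interior H\"older estimate for rough kernels \cite{CS1}, the $\tilde v_j$ are precompact in $C^0_{loc}$; in the limit, using that the energy/distributional form of the equation is stable under locally uniform convergence of the renormalized kernels, $\tilde v_j\to v_\infty$ with $\mathcal L_{K_\infty}v_\infty=0$ in all of $\R^n$, $K_\infty$ still satisfying \eqref{Kca}, and $|v_\infty(x)|\le C|x|^\alpha$. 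A Liouville theorem for translation invariant elliptic kernels then forces $v_\infty$ to be a polynomial of degree $\le m$, which contradicts the assumed failure of the improvement since $\alpha$ lies strictly below the gain that $v_\infty$ provides. The delicate point --- indeed the only genuine obstacle --- is to push this gain all the way up to the exponent $2s$, rather than stalling at the small universal H\"older exponent available for general measurable kernels. This is exactly where the finer estimates for \emph{linear} (hence simultaneously concave and convex) integro--differential equations with rough kernels from \cite{Ser} (see also \cite{CS2}, \cite{K}) are used, and in the write--up I would simply quote them.

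\emph{Part (b).} The same scheme applies with the extra regularity \eqref{Kc1a} of the kernel. Note first that \eqref{Kc1a} is invariant under the scale--invariant rescaling $K\mapsto r^{n+2s}K(r\,\cdot)$ and survives the locally uniform limit, so the limiting kernel again satisfies \eqref{Kca}--\eqref{Kc1a}. The new ingredient is that, $K$ being translation invariant, $\mathcal L_K$ commutes with incremental quotients $D_h^e v(x)=(v(x+he)-v(x))/|h|^\theta$; hence, starting from the exponent $\theta=2s-\eps$ furnished by (a), $D_h^e v$ solves an equation of the same type with right hand side $D_h^e f$, which is bounded in $L^\infty$ provided $\theta\le\gamma$ (this is where $f\in C^\gamma$ enters), up to an error coming from replacing $K$ by its truncation $\chi_{B_{1/4}}K$ --- and it is precisely \eqref{Kc1a} that makes this error bounded in terms of $\|v\|_{L^1(\R^n,d\omega)}$. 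Feeding this back into (a) and iterating reaches, after finitely many steps, every exponent below $2s+\gamma$; the final, loss--free step to $2s+\gamma$ itself comes once more from the sharp estimate for the limiting rough--kernel problem in the spirit of \cite{Ser}. The hypothesis that $2s+\gamma$ is not an integer is used so that $C^{2s+\gamma}$ is an ordinary H\"older class $C^{m,\,2s+\gamma-m}$ and so that the Liouville step produces a genuine polynomial of degree $\le m$.

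\emph{Part (c).} This part is independent of (a)--(b) and is the routine Schauder computation. Given $\|v\|_{C^{2s+\gamma}(B_1)}\le1$ and $\|v\|_{L^1(\R^n,d\omega)}\le1$, fix $x_1,x_2\in B_{1/2}$, put $\rho=|x_1-x_2|$, and estimate $\mathcal L_K v(x_1)-\mathcal L_K v(x_2)$ by splitting the defining integral at $|y|=2\rho$. On $\{|y|<2\rho\}$ one Taylor--expands $v$ at $x_1$ and at $x_2$ to order $\lceil 2s+\gamma\rceil-1$: the odd--order terms drop out by symmetry of $K$, the quadratic term (present only when $2s+\gamma>2$) is handled by comparing $D^2v(x_1)$ with $D^2v(x_2)$, and the remainders contribute $O(\rho^\gamma)$. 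On $\{|y|>2\rho\}$ one estimates $(v(x_1+y)-v(x_1))-(v(x_2+y)-v(x_2))$ using the $C^{2s+\gamma}$ regularity of $v$ (and $\nabla v\in C^{2s+\gamma-1}$ when $2s+\gamma>1$), together with \eqref{Kca} for the common part of the two integrals and \eqref{Kc1a} to compare the shifted kernels $K(\cdot-x_1)$ and $K(\cdot-x_2)$; summing over dyadic annuli again gives $O(\rho^\gamma)$. Thus $[f]_{C^\gamma(B_{1/2})}\le C$, and I would only sketch these elementary estimates.
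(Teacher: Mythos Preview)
Your proposal is correct and follows the same compactness--plus--Liouville scheme as the paper, but the paper organizes the argument differently in two respects that are worth knowing.

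First, the paper works throughout with the \emph{truncated} kernel $K_T=\chi_{B_{1/2}}K$: it proves the interior $C^\alpha$ estimate (any $\alpha<2s$) for the equation $\mathcal L_{K_T}w=g$ with $\|w\|_{L^\infty(B_1)}\le 1$ and $\|g\|_{L^\infty}\le 1$ (no tail hypothesis at all), and only afterwards derives part (a) by writing $\mathcal L_{K_T}v=f-h$ with $h$ the bounded far--field contribution. This localizes the compactness step completely and avoids tracking the $|x|^\alpha$ growth at infinity that you carry along. For part (b) the same truncation converts the tail condition $\int_{\mathcal C B_1}|v|\,|x|^{-(n+2s+1)}dx\le 1$ (note: this stronger weight, not $\|v\|_{L^1(d\omega)}$ as you wrote) into a $C^{0,1}$ bound on the error term, via \eqref{Kc1a}.

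Second, and more substantively, the paper does \emph{not} quote \cite{Ser} for the Liouville step but proves it directly: starting from the small universal H\"older exponent $\eps_0$ available for rough kernels, one observes that the discrete difference $(v(x+he)-v(x))/h^{\eps_0}$ is again a global solution with growth $|x|^{\alpha-\eps_0}$, so the same $C^{\eps_0}$ estimate applied to it gives $v\in C^{2\eps_0}$ with a correspondingly smaller growth exponent; iterating pushes past $\alpha$ and forces $v$ to be constant (or linear if $\alpha\ge 1$). This self-contained bootstrap is exactly the mechanism you describe for part (b), and the paper uses it already at the Liouville stage for part (a). Your route of outsourcing this to \cite{Ser} is legitimate --- the paper itself says the whole proposition ``can be easily deduced'' from \cite{Ser} --- but the paper's point in writing out the Appendix is precisely to avoid that citation.

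Your sketch of part (c) is more detailed than the paper's, which simply calls it ``standard'' and omits it.
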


We remark that the constant $C(\gamma)$ in part b) is independent on $\|f\|_{L^\infty}$ and $\|v\|_{L^1(\R^n,d \omega)}$.

We point out that by the results in \cite{Ser}, one could in fact relax the assumption \eqref{Kc1a} and require that it is satisfied only outside of a neighborhood of the origin.

We sketch the main steps in the proofs of parts a) and b) and use similar ideas as in Section 5. The proof of part c) is standard and we do not include it here.
 
First we obtain a Liouville type result for global solutions which have integrable decay at infinity.

\begin{lem}\label{al1}
The only global solutions to the equation
$$\mathcal L_{K} v = 0 \quad \mbox{in $\R^n$}, \quad \|v\|_{L^\infty(B_{R_k})} \le R_k^\alpha, \quad \mbox{with $R_k=2^k$, $k \ge 0$,}$$ 
for some $\alpha<2s$, are constant if $s \le \frac 12$, or linear if $s \in (\frac12,1)$
\end{lem}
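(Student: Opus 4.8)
The plan is to prove the Liouville statement by a rescaling/compactness argument combined with the interior Schauder estimate of Proposition \ref{SE}(a). First I would fix $R_k = 2^k$ and consider, for each $k$, the rescaled function $v_k(x) := R_k^{-\alpha} v(R_k x)$. Since $\mathcal L_K$ is translation invariant of order $2s$, $v_k$ solves $\mathcal L_{K_k} v_k = 0$ in $B_{R_k} \supset B_1$, where $K_k(y) = R_k^{n+2s}K(R_k y)$ satisfies the same bounds \eqref{Kca} as $K$ (with the same $\lambda,\Lambda$), and by hypothesis $\|v_k\|_{L^\infty(B_1)} \le 1$. The point is that the $L^1(d\omega)$ tail of $v_k$ is controlled uniformly: the dyadic growth $\|v\|_{L^\infty(B_{R_j})}\le R_j^\alpha$ with $\alpha < 2s$ gives, after rescaling, $\int_{\mathcal C B_1}|v_k|\,d\omega \le C\sum_{j\ge 0} 2^{j\alpha}2^{-2sj} = C(\alpha,s) < \infty$, a geometric sum since $\alpha < 2s$. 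Hence $v_k$ satisfies the hypotheses of Proposition \ref{SE}(a) (with $f=0$), so $\|v_k\|_{C^{\alpha'}(B_{1/2})}\le C$ for some $\alpha' \in (\max\{\alpha,0\}, 2s)$ — or more precisely, applying the estimate on each dyadic annulus and rescaling back, one gets uniform $C^{\alpha'}$ bounds for $v$ on $B_{R_k/2}$ with the growth $R_k^{\alpha}$.

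The second step is a bootstrap on the Hölder exponent. Knowing $v\in C^{\alpha'}_{loc}$ with controlled growth, I would differentiate: discrete difference quotients $v_h^e(x) = (v(x+he)-v(x))/|h|^{\alpha'}$ again solve $\mathcal L_K v_h^e = 0$ (translation invariance), are uniformly bounded on dyadic balls, and have the same kind of integrable tail, so Proposition \ref{SE}(a) upgrades $v$ to $C^{2\alpha'-\varepsilon}_{loc}$ with growth $R_k^\alpha$, and so on. After finitely many steps one obtains: if $s \le 1/2$, then $v \in C^{1}$ locally (or better) with $\|v\|_{L^\infty(B_R)} \le R^\alpha$ and $\alpha < 2s \le 1$, which forces $\nabla v \equiv 0$ by sending $R\to\infty$ in the gradient estimate $|\nabla v(0)| \le C R^{-1}\|v\|_{L^\infty(B_R)} \le C R^{\alpha - 1} \to 0$; hence $v$ is constant. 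If $s \in (1/2,1)$, one bootstraps to $v \in C^{1,\sigma}$ locally with growth $R^\alpha$, $\alpha < 2s$; then the second-order (or first-order difference of the gradient) estimate gives $[\nabla v]_{C^\sigma(B_{R/2})} \le C R^{-\sigma}R^\alpha = CR^{\alpha-\sigma} $, and since we may take $\sigma$ close enough to $2s - 1 > \alpha - 1$... more carefully: difference quotients of $\nabla v$ solve the equation, so $D^2 v$ (in a suitable weak sense) has the bound $|D^2 v| \le C R^{\alpha - 2}\to 0$ on $B_R$, forcing $D^2 v \equiv 0$, i.e. $v$ affine; and affine functions with $\|v\|_{L^\infty(B_R)} \le R^\alpha$, $\alpha < 1$ when $s\le 1/2$ must be constant, while for $s>1/2$ the bound $\alpha < 2s$ only forbids $\alpha \ge 1$ growth when $\alpha<1$, so a genuine linear function is allowed — consistent with the statement.

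The main obstacle I anticipate is handling the tails and the non-integrability issues cleanly during the bootstrap: each time one differentiates, the tail decay at infinity degrades, and one must check that the combination of the dyadic $L^\infty$ growth $R_k^\alpha$ with $\alpha < 2s$ still yields a convergent series $\sum 2^{j\alpha} 2^{-j(2s)} $ (for part (a)) or $\sum 2^{j\alpha}2^{-j(2s+1)}$ after using \eqref{Kc1a} — all of which converge precisely because $\alpha < 2s$. A secondary subtlety is that Proposition \ref{SE}(a) as stated only gives $C^\alpha$ for $\alpha<2s$, which may be less than $1$ when $s\le 1/2$; in that regime one cannot directly differentiate, so instead I would iterate the $C^{\alpha'}$ estimate on difference quotients of increasing order and pass to the limit, using that a function all of whose $(k)$-th difference quotients are bounded by $C|h|^{k\alpha'}/|h|^{k\alpha'}$... — i.e., obtaining directly that $v$ is a polynomial of degree $< 2s$, hence constant when $s \le 1/2$. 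The cleanest route is probably: show $v$ must be a polynomial (via the difference-quotient-plus-Schauder iteration, which caps the degree by the growth rate $\alpha < 2s$), then observe a polynomial $P$ with $\mathcal L_K P = 0$ and $\|P\|_{L^\infty(B_R)}\le R^\alpha$ has $\deg P \le 1$, and $\deg P = 1$ is compatible with $\mathcal L_K P = 0$ only because linear functions are in the kernel by symmetry of $K$; finally $\deg P = 1$ forces $\alpha \ge 1$, impossible when $s \le 1/2$, leaving only constants there.
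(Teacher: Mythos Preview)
Your approach is essentially the paper's: rescale to normalize, apply an interior H\"older estimate, feed the resulting growth bound into discrete difference quotients $(v(x+he)-v(x))/h^{\alpha'}$ to bootstrap, and let $R\to\infty$ to kill higher-order variation. Two points are worth tightening.

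First, there is a circularity in invoking Proposition~\ref{SE}(a) here. In the paper's architecture, Lemma~\ref{al1} is the input to Lemma~\ref{al2} (via compactness), and Lemma~\ref{al2} is what proves Proposition~\ref{SE}(a). So you cannot cite \ref{SE}(a) at this stage. The paper instead invokes only the elementary $C^{\eps_0}$ H\"older estimate for nonlocal equations (as in \cite{S2}, or the sliding-parabola argument used repeatedly in Section~5), which requires nothing beyond the growth hypothesis $\alpha<2s$ to control the tails. This gives $\|v\|_{C^{\eps_0}(B_{R/2})}\le CR^{\alpha-\eps_0}$, and then the difference quotient $\tilde v=(v(\cdot+he)-v)/h^{\eps_0}$ satisfies the same hypotheses with exponent $\alpha-\eps_0$, so one iterates.

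Second, the endgame is simpler than you make it. You do not need to reach $C^1$ when $s\le 1/2$, nor to argue via polynomials. After finitely many iterations you obtain $\|v\|_{C^{\alpha'}(B_{R/2})}\le C R^{\alpha-\alpha'}$ for some $\alpha'\in(\alpha,1)$; sending $R\to\infty$ gives $[v]_{C^{\alpha'}}=0$, hence $v$ is constant. When $\alpha\ge 1$ (possible only for $s>1/2$) you first iterate down to a Lipschitz bound $\|v\|_{C^{0,1}(B_R)}\le CR^{\alpha-1}$, then apply the $\alpha<1$ case to the first-order difference quotients $(v(x+he)-v(x))/h$, concluding they are constant and $v$ is linear.
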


\begin{proof}
Since $\alpha<2s$ we can apply the H\"older estimates from \cite{S2} (as in Section 5) and we obtain that 
\begin{equation}\label{a1}
 \| v \|_{C^{\eps_0}(B_{1/2})} \le C ,
 \end{equation}
for some $C$, $\eps_0$ depending only on $n$, $s$, $\alpha$. 
Since the function $R_k^{-\alpha}v(R_k x)$ satisfies the same hypotheses as $v$, we can apply the estimate above for this function and obtain
\begin{equation}\label{a2}
\|v\|_{C^{\eps_0}(B_{R_k/2})} \le C R_k^{\alpha-\eps_0}.
\end{equation}
This means that the discrete difference function
$$\tilde v:=\frac{1}{C_0} \frac{u(x+he)-u(x)}{h^{\eps_0}},\quad \quad |e|=1, h\in [0,1],$$
also satisfies the hypotheses of $v$ with $\alpha$ replaced by $\alpha -\eps_0$.

We apply the estimates \eqref{a2} for $\tilde v$ and we obtain (see Lemma 5.6 in \cite{CC})
$$\|v\|_{C^{2\eps_0}(B_{R_k})} \le C R_k^{\alpha - 2\eps_0}.   $$

We iterate this result and distinguish 2 cases, if $\alpha<1$ or $\alpha \ge 1$.

If $\alpha<1$ then we find $$\|v\|_{C^{\alpha'}(B_{R/2})} \le C R^{\alpha - \alpha'},   $$ 
for some $\alpha' \in (\alpha,1)$ and by letting $ R \to \infty$ we obtain that $v$ is a constant.

If $\alpha \ge 1$ then we obtain
$$\|v\|_{C^{0,1}(B_{R_k})} \le C R_k^{\alpha - 1},$$
hence the discrete difference quotient $(v(x+he)-v(x))/h$ satisfies the hypotheses of the lemma with exponent $\alpha-1 <1$ thus it must be constant, which gives that $v$ is a linear function.
\end{proof}

Using compactness and Lemma \ref{al1} we obtain the following interior estimate.

\begin{lem}\label{al2}
 Let $w$ be a solution to the truncated kernel equation
\begin{equation}\label{a3}
\mathcal L_{K_T} w = g \quad \mbox{in $B_{1/2}$}, \quad K_T:=\chi_{B_{1/2}} K,
\end{equation}
$$ \|g\|_{L^\infty(B_{1/2})}\le 1, \quad \quad \|w\|_{L^\infty(B_1)} \le 1.$$
Then, for any $\alpha<2s$ we have
$$\|w\|_{C^\alpha(B_{1/4})} \le C(\alpha).$$
\end{lem}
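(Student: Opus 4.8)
\medskip

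\noindent\textit{Plan of proof of Lemma \ref{al2}.} The plan is to argue by contradiction and compactness, the key point being that the truncation works in our favour: blowing up at a small scale $r$ turns the truncation ball $B_{1/2}$ into a ball of radius $\sim 1/(2r)\to\infty$, so the limiting equation is the \emph{full} operator $\mathcal L_{K}$ on all of $\R^n$, to which Lemma \ref{al1} applies. Suppose the estimate fails for some $\alpha<2s$: there are kernels $K_k$ as in \eqref{Kca}, functions $w_k$ with $\|w_k\|_{L^\infty(B_1)}\le 1$, right hand sides $\|g_k\|_{L^\infty(B_{1/2})}\le 1$, and $\mathcal L_{\chi_{B_{1/2}}K_k}w_k=g_k$ in $B_{1/2}$, whose $C^\alpha(B_{1/4})$ norms are unbounded. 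First I would use the Campanato-type description of the Hölder (semi)norm by best polynomial approximation on balls, a constant when $s\le\tfrac12$ (so $\alpha<1$) and an affine function when $s>\tfrac12$. This produces centers $x_k\in B_{1/4}$, scales $r_k>0$ with $r_k\to0$ (since $\|w_k\|_{L^\infty(B_1)}\le1$), and a divergent, near-maximal normalized deviation $M_k\to\infty$; if $P_k$ denotes the best constant/affine fit of $w_k$ on $B_{r_k}(x_k)$, set
$$\tilde w_k(x):=\frac{w_k(x_k+r_k x)-P_k(x_k+r_k x)}{M_k\,r_k^{\alpha}},\qquad x_k+r_k x\in B_1.$$
By construction $\|\tilde w_k\|_{L^\infty(B_1)}\sim 1$, the best constant/affine approximation of $\tilde w_k$ on $B_1$ is $0$, and near-maximality gives the growth bound $\|\tilde w_k\|_{L^\infty(B_\rho)}\le C\rho^{\alpha}$ for $1\le\rho\le c/r_k$. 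A change of variables (using that a symmetric $\mathcal L_K$ annihilates affine functions) shows $\tilde w_k$ solves
$$\mathcal L_{\tilde K_k}\tilde w_k=\tilde g_k\quad\text{in }B_{c/r_k},\qquad \tilde K_k(y):=r_k^{\,n+2s}K_k(r_k y),$$
where $\tilde K_k$ again satisfies \eqref{Kca} with the same $\lambda,\Lambda$, is truncated to $B_{1/(2r_k)}$, and $\|\tilde g_k\|_{L^\infty}=M_k^{-1}r_k^{\,2s-\alpha}\|g_k\|_{L^\infty}\to0$ because $\alpha<2s$ and $M_k\to\infty$.

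Next I would pass to the limit. On a fixed ball $B_R$ and for $k$ large, the increment integral defining $\mathcal L_{\tilde K_k}\tilde w_k$ is controlled uniformly by splitting it at scale $\sim 1/r_k$: on the inner part one uses $\alpha<2s$ together with $\|\tilde w_k\|_{L^\infty(B_\rho)}\le C\rho^\alpha$, and on the thin outer annulus one uses the crude bound $\|\tilde w_k\|_{L^\infty}\lesssim M_k^{-1}r_k^{-\alpha}$ together with $\tilde K_k\le \Lambda|y|^{-n-2s}$, whose contribution is $\lesssim M_k^{-1}r_k^{\,2s-\alpha}\to0$. Hence by the Hölder estimate for integro-differential equations with bounded measurable kernels (\cite{S2},\cite{CS1}) the family $\{\tilde w_k\}$ is precompact in $C^0_{\mathrm{loc}}(\R^n)$; along a subsequence $\tilde w_k\to\tilde w_\infty$ locally uniformly and $\tilde K_k\rightharpoonup \tilde K_\infty$ (so $\mathcal L_{\tilde K_k}\psi\to\mathcal L_{\tilde K_\infty}\psi$ on test functions). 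Since the truncation radii diverge and $\tilde g_k\to0$, the standard stability of solutions under such limits gives $\mathcal L_{\tilde K_\infty}\tilde w_\infty=0$ in all of $\R^n$, with $\|\tilde w_\infty\|_{L^\infty(B_R)}\le CR^{\alpha}$ for every $R\ge1$. As $\alpha<2s$, Lemma \ref{al1} forces $\tilde w_\infty$ to be constant (if $s\le\tfrac12$) or affine (if $s>\tfrac12$); but $\tilde w_\infty$ has $L^\infty(B_1)$-norm comparable to $1$ and vanishing best constant/affine fit on $B_1$, so it cannot be of that form — the desired contradiction.

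The main obstacle I expect is the bookkeeping around the truncation: verifying that after rescaling the truncated kernels converge in the right weak sense with truncation radius diverging (so the limit equation is genuinely the one of Lemma \ref{al1}), and that the far-increment contributions to $\mathcal L_{\tilde K_k}\tilde w_k$ remain uniformly negligible despite the unbounded pointwise size of $\tilde w_k$; both are handled by the scale-$1/r_k$ splitting above and by the strict inequality $\alpha<2s$. A secondary point is choosing the correct normalization — subtracting the best \emph{affine} (not merely constant) approximation when $s>1/2$, since then $\alpha$ may exceed $1$ and the relevant Liouville alternative in Lemma \ref{al1} is the affine one.
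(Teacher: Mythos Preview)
Your approach is correct and reaches the same endpoint as the paper --- a compactness argument feeding into the Liouville result of Lemma \ref{al1} --- but the route is genuinely different. The paper argues by \emph{direct iteration at a point}: it shows that if the approximation $|w-l_k|\le r_k^\alpha$ on $B_{r_k}$ (with $l_k$ constant or linear) holds for all $k\le m$ with $m\ge k_0$ large, then it propagates to $k=m+1$; the existence of $k_0$ is obtained by compactness, since the rescalings $\tilde w(x)=r^{-\alpha}w(rx)$ satisfy $\mathcal L_{\tilde K_T}\tilde w=\tilde g$ with $\tilde g\to 0$ and hence subconverge (by the H\"older/Harnack estimate) to a global solution to which Lemma \ref{al1} applies. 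Your argument is instead the \emph{Serra-style blow-up by contradiction}: assume the $C^\alpha$ norm blows up along a sequence, select centers and scales where the Campanato deviation is nearly maximal, rescale and subtract the best constant/affine fit, and pass to a limit that violates Lemma \ref{al1} while having vanishing best fit on $B_1$.

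What each buys: the paper's iteration avoids the near-maximality bookkeeping (controlling $P_k-P_k'$ across scales and the sub-$\rho^\alpha$ growth all the way out to $\rho\sim 1/r_k$) and produces the estimate directly, at the cost of an induction that must be restarted at each point. Your blow-up is more modular and is exactly the method in \cite{Ser} referenced here; it handles the full $C^\alpha$ seminorm in one stroke and makes the role of the strict inequality $\alpha<2s$ (both for $\tilde g_k\to0$ and for the tail integrability) very transparent. One minor point worth making explicit in your write-up: since $K_T$ is symmetric, $\mathcal L_{K_T}$ annihilates affine functions, so subtracting $P_k$ does not change the right-hand side --- you used this, but it deserves a sentence.
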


\begin{proof}
We may assume that $\alpha \ne 1$. We need to show that if $w$ satisfies 
\begin{equation}\label{wk}|w-l_k| \le r_k^\alpha \quad \mbox{in $B_{r_k}$}, \quad r_k=2^{-k},\end{equation}
for $k=0,1,...,m$ for some $m \ge k_0$ sufficiently large, then the inequality above holds also for $k=m+1$. Here $l_k$ is either a constant (for $\alpha<1$) or a linear function (for $\alpha>1$).
Indeed, as $k_0 \to \infty$, we may find a subsequence of rescalings
$$\tilde w:=r^{-\alpha} w(r x)  \quad \quad r=r_m$$ 
which converges uniformly on compact sets to a function $v$ that satisfies the hypotheses of Lemma \ref{al1}, and then \eqref{wk} is clearly verified for $k$ large. The uniform convergence on compact sets is once more guaranteed by Harnack inequality since $\tilde w$ satisfies
$$\mathcal L_{\tilde K_T} \tilde w = \tilde g(x):=r^{2s-\alpha}g(rx), \quad \quad \tilde K_T=\tilde K \chi_{B_{r^{-1}/2}},$$
and, as $k_0 \to \infty$, we have $\tilde g \to 0$ uniformly on compact sets.
\end{proof}

The estimate in part a) of Proposition \ref{SE} follows from Lemma \ref{al2}. We write the original equation in terms of the truncated kernel $K_T$ and obtain
$$\mathcal L_{K_T} v(x)= f(x) - h(x) \quad \quad \mbox{in $B_{1/2}$},$$
with $$h(x)= \int_{\mathcal C B_{1/2}} (v(x+y)-v(x))K(y)dy,$$
and clearly $$ |h(x)| \le C ( \|v\|_{L^1(d \omega)} + |v(x)|) \le C.$$

\

Next we apply Lemma \ref{al2} for difference quotients and obtain the $C^{2s+\gamma}, \gamma \in (0,1),$ estimate.

\begin{lem}\label{al3}
Assume that $K$ satisfies \eqref{Kc1} (only outside a neighborhood of the origin) and $w$ satisfies 
$$\mathcal L_{K_T} w = g + a \, \, w \quad \mbox{in $B_{1/2}$}, \quad \quad \quad \|w\|_{L^\infty(B_1)} \le 1,$$
for some constant $a$ with $|a| \le 1$, and with 
\begin{equation}\label{a4}
\|g\|_{C^{0,1}(B_{1/2})} \le 1.
\end{equation} 
Then, if $\alpha<2s$ we have
$$\|w\|_{C^{1+\alpha}(B_{1/4})} \le C(\alpha).$$
\end{lem}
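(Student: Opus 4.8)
The plan is to run a bootstrap on difference quotients, in the spirit of the proof of Lemma~\ref{al1}, the decisive structural fact being that $K$ is translation invariant, so that $\mathcal{L}_{K_T}$ commutes with translations and hence with difference quotients: if $w$ solves $\mathcal{L}_{K_T} w = g + a\,w$ in a ball, then for every unit vector $e$ and every small $h$ the quotient $w_h^e(x):=\bigl(w(x+he)-w(x)\bigr)/h$ solves $\mathcal{L}_{K_T} w_h^e = g_h^e + a\,w_h^e$ in a slightly smaller ball, with $g_h^e$ the difference quotient of $g$; the same identity holds for the fractional quotients $\bigl(w(x+he)-w(x)\bigr)/h^{\gamma}$. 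One may assume $1+\alpha$ is not an integer.

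First I would apply Lemma~\ref{al2} directly: the right hand side $g+a\,w$ is bounded by $\|g\|_{L^\infty}+|a|\,\|w\|_{L^\infty}\le 2$, so $w\in C^{\gamma_0}$ on an interior ball with a universal bound, for a fixed $\gamma_0<2s$, $\gamma_0\neq1$. If $2s>1$ one picks $\gamma_0\in(1,2s)$, so $w$ is already $C^{1}$ up to a small Hölder exponent and the next step may be skipped; if $2s\le1$ one takes $\gamma_0<1$.

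Next I would climb in regularity by iterating fractional difference quotients. Assuming inductively that $w\in C^{\tau}$ on an interior ball with $\tau\le1$, fix $\gamma<\tau$ (so $\gamma\le1$) and set $\tilde w(x):=C_1^{-1}\bigl(w(x+he)-w(x)\bigr)/h^{\gamma}$, with $C_1$ chosen, from the previously obtained norm, so that $\|\tilde w\|_{L^\infty}\le1$ on a slightly smaller ball. Then $\tilde w$ solves there $\mathcal{L}_{K_T}\tilde w=\tilde g+a\,\tilde w$ with $\tilde g:=C_1^{-1}\bigl(g(x+he)-g(x)\bigr)/h^{\gamma}$, and the crucial point is that, since $g\in C^{0,1}$ and $\gamma\le1$, one has $\|\tilde g\|_{L^\infty}\le |h|^{1-\gamma}/C_1\le1$, uniformly in $h$ and $e$. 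Lemma~\ref{al2} then gives $\|\tilde w\|_{C^{\gamma_0}}\le C$ uniformly in $h,e$, and the difference quotient characterization of Hölder spaces (Lemma~5.6 in \cite{CC}, as used already in Lemma~\ref{al1}) yields $w\in C^{\gamma+\gamma_0}$ on a smaller ball with a controlled norm; perturbing $\gamma$ slightly one keeps $\gamma+\gamma_0$ away from integers. Taking $\gamma$ close to $\min\{\tau,1\}$, each step improves the exponent by a fixed positive amount, so after finitely many steps (their number depending only on $s$) one reaches $w\in C^{1,\delta}$ for some $\delta>0$; in particular $w$ is Lipschitz.

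Finally, with $w\in C^{0,1}$, I would run the same step once more with the genuine first-order quotients $w_h^e$: these are uniformly bounded, they solve $\mathcal{L}_{K_T} w_h^e = g_h^e + a\,w_h^e$ with $\|g_h^e\|_{L^\infty}\le\|g\|_{C^{0,1}}\le1$, so Lemma~\ref{al2} gives $\|w_h^e\|_{C^{\beta}(B_{1/4})}\le C(\beta)$ for every $\beta<2s$, $\beta\neq1$, uniformly in $h$; letting $h\to0$ gives $\partial_e w\in C^{\beta}$ for every unit $e$, i.e.\ $w\in C^{1+\beta}$, and $\beta=\alpha$ is the claim. The main obstacle is exactly the point emphasized above: keeping the right hand side of the difference quotient equation bounded forces one to cap the fractional exponent at $1$ and to pass to true derivatives once past $C^{1}$, which is why $g\in C^{0,1}$ (rather than merely $L^\infty$) is the natural hypothesis. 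The rest is routine — one must keep track of the domains, which shrink by a controlled amount over the finitely many steps, and absorb into a bounded right hand side the lower order terms produced by restricting the truncated kernel to smaller balls; after a harmless initial rescaling this still delivers the estimate on $B_{1/4}$.
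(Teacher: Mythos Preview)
Your approach is correct and is essentially the paper's: apply Lemma~\ref{al2} to get initial H\"older regularity, then iterate it on (fractional) difference quotients to climb to $C^{1}$, and finally once more on the first-order quotients to reach $C^{1+\alpha}$. The one point you leave implicit is that the ``lower order terms produced by restricting the truncated kernel to smaller balls'' are controlled precisely through the gradient hypothesis \eqref{Kc1} on $K$ --- this gives the tail term $h_1(x)=\int_{B_{1/2}\setminus B_{r/2}} w(x+y)K(y)\,dy$ a Lipschitz bound (cf.\ Step~2 of Proposition~\ref{po1}), which is exactly what is needed so that its difference quotients with exponent $\le 1$ remain uniformly bounded; without \eqref{Kc1} this step would fail.
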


\begin{proof}
Since the right hand side is bounded, we obtain by Lemma \ref{al2} a $C^{\alpha_0}$ bound for $w$ in $B_{1/4}$ for some $\alpha_0 \in (0,2s)$. Then we iterate Lemma \ref{al2} a finite number of times for the discrete differences of $w$ and 
successively estimate $w$ in $C^{\alpha_k}(B_{r_k})$ with $r_k=4^{-k}$ and $\alpha_0<\alpha_1<\alpha_2<...< \alpha_m=1$. Then we iterate this argument one more time and obtain the desired conclusion.

Notice that in order to apply Lemma \ref{al2} in $B_{r_k}$ instead of $B_1$ we need to write the equation for the truncated kernel $$K_{T,k}:= K_T \chi_{B_{r_k/2}}.$$
Then the right hand side gets modified as follows
$$\mathcal L_{K_{T,k}} w(x)= g(x) - h_1(x) +h_2(x)   $$
with $$h_1(x)= \int_{ B_{1/2}\setminus B_{r_k/2}} w(x+y)K(y)dy= \int_{B_{1/2}(x)\setminus B_{r_k/2}(x)} w(y)K(x-y)dy,$$
$$ h_2(x)= a w(x) + \int_{B_{1/2}\setminus B_{r_k/2}} w(x) K(y) dy= (a+C(K) ) w(x).$$
 
From our hypothesis on $K$, arguing as in Step 2 of Proposition \ref{po1}, we find $\|h_1\|_{C^{0,1}} \le C$. Since $\|h_2\|_{C^{\alpha_k}} \le C \|w\|_{C^{\alpha_k}}$ in $B_{r_k/2}$, we can apply Lemma \ref{al2} for the discrete difference
$$\frac{w(x+he)-w(x)}{h^{\alpha_k}},$$
and obtain the $C^{\alpha_{k+1}}$ bound for $w$ in $B_{r_k/4}$. 
\end{proof}

Finally we prove part b) of Proposition \ref{SE}.

\begin{lem}\label{al4}
Assume that $v$ satisfies the hypotheses of part b) in Proposition \ref{SE} with 
$$\|v\|_{L^\infty(B_1)} \le \delta_0, \quad \quad [f]_{C^\gamma (B_1)} \le \delta_0$$
for some small $\delta_0$.
Then there exist  polynomials $p_k$ of degree $[\beta ]$, and $p_0\equiv 0$, such that 
$$|v-p_k| \le r_k^{\beta}  \quad \mbox{in $B_{r_k}$} , \quad r_k=2^{-k}, \quad \quad \beta:=2s+\gamma,$$
for all $k \ge 0$.
\end{lem}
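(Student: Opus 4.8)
The plan is to prove Lemma \ref{al4} by the standard compactness/iteration scheme, exactly parallel to the arguments in Section 5 (Proposition \ref{po1}, Step 2) and Lemma \ref{al3}, but now obtaining a full Taylor polynomial approximation of degree $[\beta]$ with $\beta = 2s+\gamma$. First I would set up the induction: assuming the polynomials $p_0,\dots,p_k$ with $|v-p_j|\le r_j^\beta$ in $B_{r_j}$ have been constructed, with coefficients controlled by a geometric series (hence uniformly bounded), I want to produce $p_{k+1}$. The natural candidate is to rescale $\tilde v(x):=r_k^{-\beta}(v-p_k)(r_k x)$, note that $\|\tilde v\|_{L^\infty(B_1)}\le 1$ and that $\tilde v$ has controlled polynomial growth $|\tilde v(x)|\le C|x|^\beta$ outside $B_1$, and write the equation satisfied by $\tilde v$. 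Since $\mathcal L_K p_k$ is a smooth function (a polynomial is $C^{1,1}$, so $\mathcal L_K$ applied to it makes sense after truncation) and $f$ is $C^\gamma$, the rescaled right-hand side will be of the form $\tilde f + (\text{lower order from the tails})$, and crucially $[\tilde f]_{C^\gamma}\le \delta_0$ is scale-invariant while the $L^\infty$ part of the rescaled right-hand side is $O(r_k^{2s-\beta+\text{(growth of tails)}})$ — this is where $\beta = 2s+\gamma$ and the hypothesis $[f]_{C^\gamma}\le\delta_0$ enter, together with the decay assumption $\int_{\mathcal C B_1} v\,|x|^{-(n+2s+1)}\,dx\le 1$ controlling the far tails via \eqref{Kc1a}, exactly as in Lemma \ref{hi2} and Step 2 of Proposition \ref{po1}.

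Next I would run the compactness argument: if the conclusion failed, there would be a sequence of such rescaled functions $\tilde v_m$ for which no degree-$[\beta]$ polynomial approximation at the next scale works. By the interior Hölder/Schauder estimates of Lemma \ref{al2} (and iterating it for difference quotients, as in Lemma \ref{al3}) the $\tilde v_m$ are precompact in $C^{[\beta]}_{loc}$; the limit $v_\infty$ solves $\mathcal L_K v_\infty = 0$ globally (the right-hand sides go to zero uniformly on compact sets because their $L^\infty$ norm carries the small factor $r_k^{\gamma+\cdots}$, while the $C^\gamma$-seminorm piece, after rescaling, is associated with a function vanishing to high order and can be absorbed — here one subtracts a particular solution of $\mathcal L_K w = \tilde f$ with $\tilde f$ small, exactly as in the passage ``after subtracting an explicit function whose fractional Laplacian equals $f$''). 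Then $v_\infty$ has growth $|v_\infty(x)|\le C|x|^\beta$, so by the Liouville-type Lemma \ref{al1} (applied to difference quotients / higher-order increments to push the exponent below $2s$) $v_\infty$ is a polynomial of degree $\le [\beta]$; taking $p_{k+1}$ to be (a rescaled copy of) its Taylor polynomial gives $|v_\infty - p_{k+1}| \le \frac12 \rho^\beta$ in $B_\rho$ for a fixed small dyadic $\rho$, contradicting the assumed failure for $m$ large. The geometric improvement of the error by the fixed factor, combined with the smallness $\delta_0$ absorbing the extra terms at each scale, closes the induction and also keeps the polynomial coefficients summable.

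Two points need care. First, the non-integrality hypothesis $2s+\gamma\notin\mathbb Z$ is essential: it guarantees that the only global solutions of controlled growth are genuine polynomials of degree $[\beta]$ (no logarithmic or resonant solutions), so that the blow-up limit $v_\infty$ is correctly captured by its $[\beta]$-th order Taylor polynomial; this is exactly the mechanism behind Lemma \ref{al1} and its iteration. Second, the tails: after replacing $K$ by the truncated kernel $K_T=\chi_{B_{1/2}}K$ the remaining long-range terms must be shown to contribute a $C^{0,1}$ (hence harmless) right-hand side at every scale $r_k$, using \eqref{Kc1a} and the decay bound on $v$ outside $B_1$ — this is precisely the computation already carried out in the estimates of $h_1$ in Lemma \ref{al3} and of $E_1,E_2,E_3$ in Step 2 of Proposition \ref{po1}, so I would quote those.

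**The main obstacle** I expect is the bookkeeping of the right-hand side after rescaling: one must simultaneously (i) keep the $C^\gamma$-seminorm of the rescaled forcing bounded by $\delta_0$ (scale invariance of $[\,\cdot\,]_{C^\gamma}$ at the critical exponent $\beta=2s+\gamma$), (ii) show the $L^\infty$ part and all tail contributions decay like a positive power of $r_k$ so they vanish in the compactness limit, and (iii) verify that subtracting $\mathcal L_K p_k$ (a polynomial of degree $[\beta]$, which may have degree $\ge 2$ and hence non-trivial $\mathcal L_K$) does not destroy these estimates — the curvature/higher-order terms of $p_k$ interact with the kernel and must be re-expanded at each step. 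Getting the constants to propagate so that the $p_k$ converge to a single Taylor polynomial of $v$ at the origin, with the claimed bound, is the crux; everything else is a routine transcription of the Section 5 arguments.
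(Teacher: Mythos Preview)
Your overall architecture (induction on scales, rescaling $\tilde v=r_k^{-\beta}(v-p_k)(r_k\cdot)$, compactness) matches the paper's, but the core step is misdiagnosed and would not go through as written.

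\textbf{The right-hand side does not vanish in the limit.} You claim that after rescaling the $L^\infty$ part of the forcing ``carries the small factor $r_k^{\gamma+\cdots}$'' and so tends to zero, giving a global homogeneous equation to which Liouville applies. This is false. The rescaled forcing is $\tilde g(x)=r^{-\gamma}\bigl(f(rx)-\mathcal L_K(p_k\psi)(rx)\bigr)$; only its $C^\gamma$ \emph{seminorm} stays bounded (by $C\delta_0$), while its $L^\infty$ norm is a priori of order $r^{-\gamma}$, since Proposition~\ref{SE}(b) imposes no bound on $\|f\|_{L^\infty}$. Moreover, once you pass to the truncated kernel $K_T$ (which you must: $|\tilde v|\lesssim |x|^{\beta}$ with $\beta>2s$, so $\tilde v\notin L^1(d\omega)$ uniformly and the untruncated operator is not well-defined on $\tilde v$), the tail term
\[
h(x)=\int_{\mathcal C B_{1/2}(x)}\tilde v(y)\,\tilde K(x-y)\,dy
\]
enters the right-hand side. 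By \eqref{Kc1a} and the weighted bound $\int_{\mathcal C B_1}|\tilde v|\,|x|^{-(n+2s+1)}dx\le C_0$ one gets $[h]_{C^{0,1}}\le C_1$ with $C_1$ \emph{universal, not small}. So even as $\delta_0\to 0$ the limiting equation is $\mathcal L_{\bar K_T}\bar v=\bar g_0+a\,\bar v$ with $\bar g_0$ Lipschitz and nonzero. The homogeneous Liouville route is therefore unavailable.

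\textbf{What the paper actually does.} Two ingredients you are missing: (i) an $L^\infty$ bound on $g_0:=\tilde g-h$ is obtained by sliding a paraboloid to touch $\tilde v$ in $B_1$, giving a point $x_0$ where $\mathcal L_{K_T}\tilde v(x_0)$ (hence $g_0(x_0)$) is bounded, which together with the $C^\gamma$ seminorm bound controls $\|g_0\|_{L^\infty}$; (ii) the limit as $\delta_0\to0$ satisfies the hypotheses of Lemma~\ref{al3} (Lipschitz right-hand side, of the form $g+aw$), which yields $C^{1+\alpha}$ regularity with $1+\alpha>\beta$. This $C^{1+\alpha}$ bound on the limit is what produces the degree-$[\beta]$ polynomial approximation at the next scale. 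In short, the compactness step feeds into Lemma~\ref{al3}, not Lemma~\ref{al1}. Your item (ii) in the ``main obstacle'' paragraph is precisely where the argument breaks; replace it by the paraboloid-sliding $L^\infty$ bound and the appeal to Lemma~\ref{al3}.
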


\begin{proof}

We prove the lemma by induction by showing that if the conclusion holds up to some $k$ large, then it holds also for $k+ m_0$ for some fixed $m_0$.

By the induction hypothesis, the coefficients of the polynomials $p_k$ are uniformly bounded. Hence, if $\psi$ is a cutoff function which is $1$ in $B_{1/2}$ and $0$ outside $B_1$, then $p_k\psi$ is a $C_0^\infty$ function with a uniform $L^\infty$ bound and 
$$\mathcal L_K(p_k \psi)=q \quad \mbox{with} \quad \|q\|_{C^{0,1}}\le C.$$
 
Now we write the equation for the rescaling $\tilde v$ of $v-p_k \psi$
$$\tilde v (x) = r^{-\beta}(v-p_k \psi)(rx), \quad \quad r=r_k,$$
and obtain
$$\mathcal L_{\tilde K} \tilde v (x) =\tilde g(x):=r^{-\gamma}f(rx) + r^{-\gamma} q(rx) \quad \quad \mbox{in $B_{r^{-1}}$.}  $$
Notice that $[\tilde g]_{C^\gamma} \le C \delta_0$ in $B_2$ provided that $r$ is sufficiently small, and by the induction hypothesis
$$|\tilde v|_{L^\infty(B_1)}\le 1, \quad |\tilde v(x)| \le C |x|^\beta  \quad \mbox{in} \quad B_{r^{-1}}\setminus B_1,$$
which gives
\begin{equation}\label{final}\int_{\mathcal C B_1} |\tilde v|  \, \, |x|^{-(n+2s+1)} \, dx \le C_0,\end{equation}
for a fixed $C_0$ depending only on $\gamma$ and the universal constants.

As in Lemma \ref{al3} we write the equation for $\tilde v$ in $B_{1/2}$ using the truncated kernel $ \tilde K_T$ and obtain
$$\mathcal L_{\tilde K_T} \tilde v= \tilde g  - h + C(K) \tilde v=: g_0 + a \, \,  \tilde v,$$
with
$$h(x)= \int_{\mathcal C B_{1/2}(x)} \tilde v(y)\tilde K(x-y)dy.$$
From the hypothesis on $K$ and \eqref{final} we find $$[h]_{C^{0,1}(B_2)} \le C_1.$$
We use the estimate on the $C^\gamma$ seminorm of $g_0$ and deduce that 
\begin{equation}\label{a5}
\|g_0\|_{C^\gamma(B_{1/2})} \le C,
\end{equation} 
by obtaining an $L^\infty$ bound for $g_0$. We achieve this by sliding the paraboloid $4 |x|^2$ by above till it touches the graph of $ \tilde v$ at some $x_0 \in B_1$. Then $\mathcal L_{K_T}v(x_0) \le C$ hence $g_0(x_0) \le C$, and similarly we find a point $x_1$ such that $g_0(x_1) \ge -C$, and this proves \eqref{a5}.  

By Lemma \ref{al2} the function $\tilde v$ is uniformly H\"older continuous in $B_1$. Moreover, $g_0$ is the sum of a Lipschitz function (with bounded Lipschitz norm) and a function with $C^\gamma$ norm bounded by $C \delta_0$. By compactness and Lemma \ref{al3} we find that as $\delta_0 \to 0$ we can approximate $\tilde v$ uniformly in $B_{1}$ by a function with bounded $C^{1+\alpha}$ norm in $B_{1/4}$ (with $1+2s>1+\alpha > \beta$.) Thus we can find $m_0$ universal such that
$$|\tilde v - \tilde p| \le \rho^\beta \quad \mbox{in $B_\rho$}, \quad \rho=2^{-m_0}.$$
This means that the induction hypothesis holds for $k+ m_0$, and the lemma is proved.  
\end{proof}

\end{document}